\definecolor{labelkey}{gray}{.8}
\definecolor{refkey}{gray}{.8}
\definecolor{darkgreen}{rgb}{0,0.5,0}
\definecolor{darkblue}{rgb}{0,0,0.7}
\definecolor{darkred}{rgb}{0.9,0.1,0.1}
\newtheorem{proposition}{Proposition}
\newtheorem{theorem}[proposition]{Theorem}
\newtheorem{lemma}[proposition]{Lemma}
\newtheorem{corollary}[proposition]{Corollary}
\theoremstyle{remark}
\newtheorem{remark}[proposition]{Remark}
\theoremstyle{definition}
\newtheorem{hypothesis}[proposition]{Hypothesis}
\numberwithin{equation}{section}
\numberwithin{proposition}{section}
\renewcommand{\leq}{\leqslant}
\renewcommand{\geq}{\geqslant}
\renewcommand{\subset}{\subseteq}
\newcommand{\mcl}{\mathcal}
\newcommand{\F}{\mathcal{F}}
\newcommand{\G}{\mathcal{G}}
\newcommand{\K}{\mathcal{K}}
\renewcommand{\S}{\mathsf{S}}
\newcommand{\DD}{\mathbf{D}} 
\newcommand{\DDb}{{\overbracket[1pt][-1pt]{\DD}}} 
\newcommand{\cm}{\mathbf{\Sigma}} 
\newcommand{\D}{{\mathsf{D}}} 
\newcommand{\api}{\overrightarrow{\pi}} 
\newcommand{\GG}{\tilde G^{(1)}}
\renewcommand{\c}{\mathbf{c}}
\newcommand{\cc}{\bar{\c}}
\newcommand{\E}{\mathbb{E}}
\newcommand{\Er}{\mathbb{E}_{\rho}}
\renewcommand{\Pr}{\mathbb{P}_{\rho}}
\newcommand{\expec}[1]{\left\langle #1 \right\rangle}
\renewcommand{\L}{\mathcal{L}}
\newcommand{\Lb}{{\overbracket[1pt][-1pt]{\L}}}
\newcommand{\Pb}{\bar P}
\newcommand{\La}{\Lambda}
\newcommand{\N}{\mathbb{N}}
\newcommand{\Ll}{\left}
\newcommand{\Rr}{\right}
\newcommand{\lhs}{left-hand side}
\newcommand{\rhs}{right-hand side}
\newcommand{\1}{\mathbf{1}}
\newcommand{\R}{\mathbb{R}}
\newcommand{\Z}{\mathcal{Z}}
\newcommand{\Zd}{{\mathbb{Z}^d}}
\renewcommand{\P}{\mathbb{P}}
\newcommand{\ov}{\overline}
\renewcommand{\bar}{\overline}
\renewcommand{\tilde}{\widetilde}
\newcommand{\ep}{\varepsilon}
\renewcommand{\d}{{\mathrm{d}}}
\newcommand{\var}{\mathbb{V}\!\mathrm{ar}}
\renewcommand{\epsilon}{\varepsilon}
\newcommand{\T}{\mathsf{T}}
\newcommand{\X}{\mathcal{X}} 
\newcommand{\g}{\mathbf{g}} 
\newcommand{\HH}{\mathcal{H}}
\newcommand{\cu}{{\scaleobj{1.2}{\square}}}
\newcommand{\Rd}{{\mathbb{R}^d}}
\renewcommand{\r}{\mathbf{r}}
\newcommand{\nub}{\bar{\nu}}
\newcommand{\fil}{\mathscr{F}}
\newcommand{\x}{\boldsymbol x}
\newcommand{\y}{\boldsymbol y}
\newcommand{\Ent}{\text{Ent}}
\newcommand{\sE}{\mathscr{E}} 
\newcommand{\sEb}{\bar{\mathscr{E}}}
\DeclareMathOperator{\supp}{supp}
\DeclareMathOperator{\diam}{diam}
\newcommand{\dH}{\dot{H}}  
\newcommand{\Ind}[1]{\mathbf{1}_{\left\{#1\right\}}}
\newcommand{\id}{\mathsf{Id}}
\newcommand{\norm}[1]{\left\Vert{#1}\right\Vert}
\newcommand{\bracket}[1]{\left\langle{#1}\right\rangle}
\title[Relaxation of non-gradient exclusion processes]{Relaxation to equilibrium of conservative dynamics II: non-gradient exclusion processes}
\author[C. Gu]{Chenlin Gu} 
\address[Chenlin Gu]{Yau Mathematical Sciences Center, Tsinghua University, China}
\email{gclmath@tsinghua.edu.cn}
\author[L. Yang]{Linzhi Yang} 
\address[Linzhi Yang]{Qiuzhen College, Tsinghua University, China}
\email{ylz24@mails.tsinghua.edu.cn}
\begin{document}

	\begin{abstract}
		For the speed-change exclusion process on $\Zd$ reversible with respect to the product Bernoulli measure, we prove that its semigroup $P_t$ satisfies a variance decay  $\var[P_t u] = C_u t^{-\frac{d}{2}} + o(t^{-\frac{d+\delta}{2}})$ for every local function $u$, with the constant $C_u$ explicitly characterized. This extends the result of Janvresse, Landim, Quastel and Yau in [Ann. Probab. 27(1) 325--360, 1999] to a non-gradient model. The proof combines the regularization argument in the previous work, and the chaos expansion in [Markov Process. Related Fields, 5(2) 125--162, 1999] by Bertini and Zegarlinski, via a new input from the homogenization theory.
		
		\bigskip
		
		\noindent \textsc{MSC 2010:} 82C22, 35B27, 60K35.
		
		\medskip
		
		\noindent \textsc{Keywords:} interacting particle system, equilibrium fluctuation, non-gradient condition, heat kernel, quantitative homogenization, two-scale expansion.
		
	\end{abstract}
	\maketitle
	
	\setcounter{tocdepth}{1}
	\tableofcontents
	
	\newpage
	
	%
	%
	%
	%
	%
	%
	%
	%
	\section{Introduction}

	The decay of diffusive semigroup is a fundamental topic in the research of partial differential equations and probability. Especially, different from the case in finite domain, the diffusion in the infinite volume space does not have spectral gap, thus the decay is not of exponential type. For the standard heat equation in $\Rd$, its semigroup has Gaussian distribution, so a polynomial decay of type $t^{-\frac{d}{2}}$ can be obtained by calculation. A less solvable example is the parabolic equation of divergence form, which requires the Aronson--Nash type estimate \cite{Aronson67, Nash58}. 
	
	A similar question is posed to conservative interacting particle systems of diffusive universality, and the Gaussian decay is also expected under invariant measure, i.e.
	\begin{align}\label{eq.GenralDecay}
		\var[P_t u] \leq C t^{-\frac{d}{2}},
	\end{align}
	with $P_t$ as the semigroup, and $u$ as a local function on particle configuration. This result is known as \emph{the relaxation to equilibrium}. One solvable example is the linear statistic of independent simple random walks on $\Zd$: let $\eta = (\eta_x)_{x \in \Zd} \in \N^{\Zd}$ stand for the configuration, and let $u$ be a function of type 
	\begin{align}\label{eq.u_lienar}
		u(\eta) = \sum_{x \in \Zd} f(x) \eta_x.
	\end{align}
	If every particle runs a simple symmetric random walk, then its generator $\L$ satisfies
	\begin{align}\label{eq.generator}
		(\L u)(\eta) = \sum_{x \in \Zd} \frac{1}{2d} \sum_{y \sim x} \Ll(f(y)-f(x) \Rr)\eta_x.
	\end{align}
	We thus have $P_t u (\eta) = \sum_{x \in \Zd} f_t(x) \eta_x$, with $f_t$ satisfying the discrete heat equation $\partial_t f_t(x) = \frac{1}{2d} \sum_{y \sim x} \Ll(f_t(y)-f_t(x) \Rr)$, and \eqref{eq.GenralDecay} follows like PDE setting.
	
	The argument above is quite limited for two reasons. Firstly, a general particle system has interactions. Secondly, there are more general functions $u$ than linear statistics. Actually, if $u$ is a linear statistic of form \eqref{eq.u_lienar}, then the study of variance can be reduced to \emph{the equilibrium fluctuation}, which is a well-understood topic for a large family of particle systems in \cite{broxrost,spo86,mpsw86,cha94,chayau92,lu94,cha96,fun96}; it is also extended to the non-equilibrium setting in \cite{fpv88,chayau92, jarmen18}; see also Chapter~11 of the monograph \cite{kipnis1998scaling} by Kipnis and Landim. Hence, we do focus on the general nonlinear functions $u$.

	One progress is \eqref{eq.GenralDecay} for the simple symmetric exclusion (SSEP), in which every site is occupied by at most one particle, i.e. $\eta \in \{0,1\}^{\Zd}$. Bertini and Zegarlinski proved this result in \cite{berzeg} via a \emph{generalized Nash estimate} for exclusion system 
	\begin{align}\label{eq.NashGeneralized}
		\var[u] \leq C \E[u(-\L u)]^{\frac{d}{d+2}} ||| u |||^{\frac{4}{d+2}},
	\end{align}
	where $\E[u(-\L u)]$ is the Dirichlet form associated to the generator of SSEP, and $||| \cdot |||$ is a specific norm. This estimate helps prove \eqref{eq.GenralDecay}, provided the contraction of $t \mapsto ||| P_t u |||$ holds.  Bertini and Zegarlinski also obtained \eqref{eq.NashGeneralized} for the exclusion under Gibbs measure in \cite{berzeg2}. The convergence to equilibrium under $L^p$ distance was later extended by  Ferrari, Galves, and Landim in \cite{FGL} using coupling argument.
	
	An unforeseen circumstance is that, the contraction of $t \mapsto ||| P_t u |||$ is only verified for symmetric exclusion, but unknown for other models. This is a major difference from the diffusion in PDEs, and really poses a challenge in the extensions to general particle systems. For this reason, Janvresse, Landim, Quastel, and Yau developed another approach in \cite{jlqy}, and derived the sharp asymptotic decay for the zero-range model that 
	\begin{align}\label{eq.JLQY}
		\var[u] = C_u t^{-\frac{d}{2}} + o(t^{-\frac{d}{2}}).
	\end{align}
	The meaning of the remainder is that ${\lim_{t \to \infty}t^{\frac{d}{2}}o(t^{-\frac{d}{2}}) = 0}$. The constant $C_u$ is also characterized explicitly for every local function $u$
	\begin{align}\label{eq.Cu}
		C_u = \frac{(\tilde u')^2 \chi}{\sqrt{(8\pi)^{d} \det [\DD]}}.
	\end{align}
	Here $\tilde u = \E[u]$ and $\tilde u'$ is the derivative in function of the density. The quantity $\chi$ is \emph{the compressibility}, and $\DD$ is \emph{the diffusion matrix}, and $\mathrm{det}[\DD]$ stands for its determinant.  
	Different from the generalized Nash inequality, the proof in \cite{jlqy} relies on the cutoff of dynamic, the regularization argument, and the estimate of entropy. Since this approach does not require a contraction of norm $||| \cdot |||$ mentioned above, it was adapted to several other situations: Landim and Yau obtained \eqref{eq.GenralDecay} (with a logarithm correction) for a Ginzburg--Landau model in \cite{lanyau}. For a family of exclusion processes under mixing condition, Cancrini, Cesi, and Roberto proved the following upper bound in \cite{ccr} 
	\begin{equation}\label{eq.ccr}
		\var[P_t u]  \leq C(\epsilon, u) t^{- (\frac{d-\epsilon}{2})}.
	\end{equation}
	Here $\epsilon > 0$ can be arbitrarily small, and $C(\epsilon, u)$ only depends on $\epsilon$ and $u$. This model is quite general, but the sharp convergence order was not attained; see the discussion in the first paragraph \cite[Page~219]{ccr}.  The argument was also partially applied in \cite{giunti2019heat} to study the heat kernel of the tagged particle in exclusion, by Giunti, Yu Gu, and Mourrat. In the work \cite{gu2020decay}, the first author of present paper adapted the same approach to derive a bound like \eqref{eq.GenralDecay} (with a logarithm correction) for a particle system in continuum configuration space.

	The object of present paper is a sharp estimate like \eqref{eq.JLQY} in non-gradient exclusion processes. We believe the sharp Gaussian upper bound is universal. Furthermore, we also wonder if it is the case for the leading constant $C_u$ in \eqref{eq.Cu}. As far as we know, the leading constant was only clarified for two examples, the zero-range model in \cite{jlqy} and SSEP in \cite{berzeg}, so a lot of work still remains to be done. The present paper restarts the exploration along this direction, and we choose the non-gradient exclusion process with the product Bernoulli invariant measure as an example. This is a canonical model and was studied in \cite{fuy, fun96, PatriciaJara} for the hydrodynamic limit, the equilibrium fluctuation, and the KPZ fluctuation under a weak drift. The present work justifies its sharp Gaussian bound as \eqref{eq.JLQY}, and confirms the leading constant with the same expression as \eqref{eq.Cu}. The new input in the proof comes from the recent progress in quantitative homogenization, especially the adaptation to the exclusion process \cite{funaki2024quantitative}. Notably, the homogenization method builds a bridge between the two existing approaches in \cite{berzeg} and \cite{jlqy}, so one can utilize the advantages inherent to each method.

	\subsection{Main result}
	Let $\Zd$ be the Euclidean lattice and $\X := \{0,1\}^{\Zd}$ stand for the space of the configuration of the exclusion process. We denote by $\eta = \{\eta_x: x \in \Zd \}$ the canonical element in $\X$. Here $\eta_x = 0$ means the site $x$ is vacant and $\eta_x = 1$ means the site is occupied. We denote by $y \sim x$ the neighbor for $x,y \in \Zd$, i.e. $\vert x - y\vert = 1$. Then $\{x,y\}$ is called an (undirected) bond. For every $\Lambda \subset \Zd$, we denote by $\Lambda^*$ the bond in $\Lambda$ that 
	\begin{align}\label{eq.defBond}
		\Lambda^* := \{ \{x,y\}: x,y \in \Lambda, x \sim y\}.
	\end{align} 
	
	For $x,y \in \Zd$, the exchange operator $\eta^{x,y}$ is defined as 
	\begin{align}\label{eq.exchange}
		(\eta^{x,y})_z := \Ll\{\begin{array}{ll}
			\eta_z, & \qquad z \neq x,y; \\
			\eta_y, & \qquad z = x; \\
			\eta_x, & \qquad z = y.
		\end{array}\Rr.
	\end{align}
	Especially, when $b = \{x,y\}$ is a bond, we also write $\eta^b$ instead of $\eta^{x,y}$, and define the Kawasaki operator $\pi_b \equiv \pi_{x,y}$ 
	\begin{align}\label{eq.Kawasaki}
		\pi_b F(\eta) := F(\eta^b) - F(\eta).
	\end{align} 
	For every $x \in \Zd$, the translation operator $\tau_x$ is defined as 
	\begin{align}\label{eq.translation1}
		(\tau_x \eta)_{y} := \eta_{x+y},
	\end{align}
	and given a function $F$ on $\X$, we also define $\tau_x F$ as 
	\begin{align}\label{eq.translation2}
		(\tau_x F)(\eta) := F(\tau_x \eta).
	\end{align}
	
	The \emph{non-gradient exclusion process} on $\Zd$ is defined by the generator below
	\begin{align}\label{eq.Generator}
		\L := \sum_{b \in (\Zd)^*} c_b(\eta) \pi_b = \frac{1}{2}\sum_{x,y \in \Zd: \vert x - y\vert = 1} c_{x,y}(\eta) \pi_{x,y},
	\end{align}
	where the family of functions
	\begin{align}
		\{c_b(\eta) \equiv c_{x,y}(\eta) = c_{y,x}(\eta); b=\{x,y\} \in (\Zd)^*\},
	\end{align}
	determine the jump rate of particles on the nearest bonds. This model is also called \emph{the speed-change Kawasaki dynamics} or \emph{the lattice gas} in the literature. 
	
	We suppose the following conditions for the jump rate throughout the paper without specific explanation.
	\begin{hypothesis}\label{hyp} The following conditions are supposed for  $\{c_b\}_{b \in (\Zd)^*}$.
		\begin{enumerate}
			\item Non-degenerate and local: $c_{x,y}(\eta)$ depends only on $\{\eta_z: \vert z - x\vert \leq \r\}$ for some integer $\r > 0$, and is bounded on two sides $1 \leq c_{x,y}(\eta) \leq \lambda$.
			\item Spatially homogeneous: for all $\{x,y\} \in (\Zd)^*$, $c_{x,y} = \tau_x c_{0,y-x}$.
			\item Detailed balance under Bernoulli measures: $c_{x,y}(\eta)$ is independent of $\{\eta_x, \eta_y\}$.
		\end{enumerate}
	\end{hypothesis}
	This model is known of non-gradient type, i.e. one cannot find functions $\{h_{i,j}\}_{1\leq i,j\leq d}$ such that $c_{0,e_i} (\eta)(\eta_{e_i} - \eta_0 ) = \sum_{j=1}^d \Ll((\tau_{e_j} h_{i,j})(\eta) - h_{i,j}(\eta)\Rr)$ for general $\{c_{b}\}_{b \in (\Zd)^*}$, with $\{e_i\}_{1 \leq i \leq d}$ the canonical basis of $\Zd$. 
	
	For the non-gradient model, its long-time behavior is governed by \emph{the diffusion matrix}.  We refer to \cite[Part II, Proposition~2.2]{spohn2012large} and \cite[(1.5)]{fuy} for the background and the definition. The diffusion matrix $\DD : (0,1) \to \R^{d \times d}$ is defined by
	\begin{align}\label{eq.Einstein}
		\DD(\rho) := \frac{\c(\rho)}{2 \chi(\rho)},
	\end{align}
	where $\chi(\rho)$ is \emph{the compressibility}
	\begin{align}\label{eq.defCompress}
		\chi(\rho) := \rho (1-\rho),
	\end{align}  
	and  $\c(\rho)$ is \emph{the effective conductivity} defined as follows. We construct a quadratic form with respect to the function $F \in \F_0^d$ 
	\begin{align}\label{eq.defQuadra}
		\xi \cdot \c(\rho; F) \xi = \frac{1}{2} \sum_{\vert x\vert = 1} \bracket{c_{0,x}\Ll(\xi \cdot \Ll\{ x(\eta_x - \eta_0) - \pi_{0,x}(\sum_{y \in \Zd} \tau_y F)\Rr\}\Rr)^2}_{\rho},
	\end{align}
	where $\F_0$ is the local function space on $\X$ and $\F_0^d := (\F_0)^d$, and $\bracket{\cdot}_{\rho}$ stands for the expectation under Bernoulli product measure of density $\rho \in (0,1)$. Then $\c(\rho)$ is the minimization of $\c(\rho; F)$
	\begin{align}\label{eq.defC}
		\xi \cdot \c(\rho) \xi := \inf_{F \in \F_0^d}\xi \cdot \c(\rho; F) \xi.
	\end{align}
	
	Let $P_t := e^{t \L}$ be the semigroup associated to \eqref{eq.Generator}, and we study its convergence to equilibrium. Our main result is the following one.
	\begin{theorem}\label{thm.main}
		There exists a positive exponent $\delta(d,\r, \lambda) > 0$, such that for every local function $u$, we have 
		\begin{align}\label{eq.main}
			\var_{\rho}[P_t u] = \frac{\tilde u' (\rho)^2 \chi(\rho)}{\sqrt{(8\pi t)^{d} \det [\DD(\rho)]}} + o(t^{-\frac{d+\delta}{2}}).
		\end{align}
		Here the function $\tilde u $ is defined as $\tilde u (\rho) := \bracket{u}_\rho$, and $\tilde u' (\rho)$ is the derivative of the mapping $\rho \mapsto \tilde u (\rho)$. The remainder depends on $u$ and satisfies ${\lim_{t \to \infty}t^{\frac{d+\delta}{2}}o(t^{-\frac{d+\delta}{2}}) = 0}$.
	\end{theorem}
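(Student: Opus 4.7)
The plan is to exploit a three-way interplay: the Wiener chaos decomposition of \cite{berzeg} to isolate a Gaussian linear statistic as the leading term, the regularization scheme of \cite{jlqy} to tame the nonlinear higher-order part, and the quantitative two-scale expansion from the recent homogenization theory for the exclusion process \cite{funaki2024quantitative} to provide the algebraic rate $t^{-\delta/2}$ that closes the gap to the sharp asymptotic.

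First, with respect to the Bernoulli product measure $\bracket{\cdot}_\rho$, I would decompose $u - \tilde u(\rho) = \sum_{n \geq 1} u^{(n)}$, where $u^{(n)}$ lies in the $n$-th Wiener chaos spanned by the orthogonal products $\prod_{x \in A}(\eta_x - \rho)$ with $|A| = n$. The first chaos $u^{(1)}$ is a linear statistic whose total \emph{mass} (the sum of its coefficients) equals $\tilde u'(\rho)$; by chaos orthogonality and the $L^2$-contractivity of $P_t$, the leading contribution to $\var_{\rho}[P_t u]$ comes from $u^{(1)}$, while all other chaos sectors should be shown to decay strictly faster than $t^{-d/2}$.

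For the linear part, I would apply the Kipnis-Varadhan/Green-Kubo characterization of the diffusion matrix used in \eqref{eq.defC}, together with quantitative homogenization. These say that $P_t$ acting on a localized density profile behaves, up to an error of size $t^{-(d+\delta)/2}$ for some $\delta = \delta(d, \r, \lambda) > 0$, like the continuum heat semigroup with covariance $2 t \DD(\rho)/\chi(\rho)$. Evaluating the resulting Gaussian heat kernel at the origin gives precisely the leading constant $\tilde u'(\rho)^2 \chi(\rho)/\sqrt{(8\pi t)^d \det \DD(\rho)}$. The minimizing corrector $F$ in \eqref{eq.defQuadra} enters here as the bridge between the two approaches: its quantitative sublinearity — the essential output of the quantitative homogenization theory for this model — is what permits one to absorb the non-gradient defect $c_{0,x}(\eta)(\eta_x-\eta_0)-\DD(\rho)\cdot(\text{discrete density gradient})$ into the error.

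For the higher chaos components $u^{(n)}$ with $n \geq 2$, the obstacle is that the non-gradient generator $\L$ does \emph{not} preserve the chaos order (contrasting with the symmetric Laplacian of SSEP used in \cite{berzeg}). I would therefore follow the regularization of \cite{jlqy}: average $u^{(n)}$ on mesoscopic boxes of scale $\ell_t \sim t^{\alpha}$, split it into a smoothed piece to which the homogenized Gaussian bound applies, and a local fluctuation piece controlled via Dirichlet-form and entropy estimates in the spirit of the generalized Nash inequality \eqref{eq.NashGeneralized}. After optimizing $\alpha$, both pieces are shown to be $o(t^{-(d+\delta)/2})$. The hard part will be quantitative bookkeeping: one must match the polynomial homogenization error $t^{-\delta/2}$ with the Dirichlet-form dissipation so that every higher-chaos sector, and every cross-term produced by chaos mixing, falls strictly below the leading $t^{-d/2}$ rate. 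This is exactly the gap between the bound \eqref{eq.ccr} of Cancrini-Cesi-Roberto and the sharp asymptotic \eqref{eq.main}; identifying an explicit (small) $\delta(d,\r,\lambda) > 0$ and checking that every error produced along the regularization can be absorbed into $o(t^{-(d+\delta)/2})$ is the core technical challenge of the proof.
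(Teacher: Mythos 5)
You have identified the same three ingredients as the paper (chaos expansion, JLQY-type regularization, quantitative homogenization), but the way you assemble them contains two genuine gaps. First, you decompose $u$ into Wiener chaoses and argue that ``by chaos orthogonality and $L^2$-contractivity of $P_t$'' the leading contribution comes from $u^{(1)}$. This does not work for the non-gradient generator: $P_t$ does not preserve the chaos grading, so the orthogonality of the sectors at time $0$ is destroyed at time $t$, $P_t u^{(1)}$ is no longer a linear statistic, and the variance does not split chaos-by-chaos; the cross-terms you defer to ``bookkeeping'' are exactly the non-gradient difficulty, and your proposed tool for the higher sectors --- Nash/Dirichlet-form estimates in the spirit of \eqref{eq.NashGeneralized} --- is unavailable for $P_t$, because the contraction of the triple norm $t\mapsto |||P_t\cdot|||_n$ (the analogue of \eqref{eq.ContraPbart}) is only known for symmetric exclusion. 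The paper avoids this entirely: chaos analysis is performed only for an auxiliary SEP semigroup $\Pb_t$ built to have diffusion matrix $\DD(\rho)$ (Corollary~\ref{cor.CovSEP}), which \emph{does} commute with the chaos projections, and $P_t$ is never decomposed by chaos at all; it enters only through the decomposition \eqref{eq.KeyDecomposition}.

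Second, your treatment of the linear part presumes that quantitative homogenization yields an \emph{absolute} error $t^{-(d+\delta)/2}$. What the two-scale expansion actually gives (Proposition~\ref{prop.MixL2HkEs}) is a \emph{relative} error: $\|(P_t-\Pb_t)F\|_{L^2}^2\lesssim t^{-2\beta}\|F\|_{L^2}^2+\dots$ with a small $\beta$. Applied directly to $F=u-\bracket{u}_\rho$ this is nowhere near $o(t^{-d/2})$. The missing idea is the order of operations: one must first replace $u$ by the spatial average $R_{K(t)}u$ over a mesoscopic box of side $\simeq t^{(1-\varepsilon)/2}$, whose variance already decays like $t^{-(1-\varepsilon)d/2}$ by independence (see \eqref{eq.AveInitL2}), so that the multiplicative factor $t^{-2\beta}$ then pushes the homogenization error strictly below $t^{-d/2}$; the cost of this replacement, $\var_\rho[P_t(u-R_{K(t)}u)]$, is what the JLQY cutoff/spectral-gap/entropy machinery controls (Proposition~\ref{prop.RegulEff}), applied to the \emph{whole} function $u$, not only to the higher chaoses. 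Without this regularize-first architecture, neither your linear-part estimate nor your higher-chaos estimate closes, and no choice of exponents in your scheme produces the claimed $o(t^{-(d+\delta)/2})$ remainder.
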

	
	As discussed in the introduction, this result generalizes \cite[Theorem~1.1]{jlqy} in the non-gradient exclusion processes, with the leading order constant of the same form.

	\subsection{Sketch of the proof}
	
	The proof contains three ingredients, which can be summarized as ``regularization--homogenization--chaos expansion'' illustrated as follows
	\begin{align}\label{eq.KeyDecomposition}
		\boxed{P_t u =  \underbrace{(P_t u -  P_t R_{K(t)}u)}_{\text{regularization \cite{jlqy}}} + \underbrace{(P_t - \Pb_t) R_{K(t)}u}_{\text{homogenization}} + \underbrace{\Pb_t R_{K(t)}u}_{\text{chaos expansion \cite{berzeg}}}.}
	\end{align}
	The notations will be clarified in the following paragraphs. Among them, the chaos expansion appeared in \cite{berzeg} by Bertini and Zegarlinski, and the regularization was developed by Janvresse, Landim, Quastel, and Yau in \cite{jlqy}. We will review them and explain how the two approaches are linked via the homogenization method.  
	
	\subsubsection{Chaos expansion}
	We consider at first a symmetric exclusion process (SEP) with constant jump rate, whose generator $\Lb$ can be written as
	\begin{align}\label{eq.GeneratorEffective}
		\Lb := \frac{1}{2}\sum_{x \in \Zd} \sum_{y \in \Zd} Q_{y-x} \pi_{x,y},
	\end{align}
	Here $Q : \Zd \to \R_+$ is a symmetric jump rate of compact support. We denote by  $\Pb_t := e^{t\Lb}$ its semigroup and $\DDb$ its diffusion matrix. 	Although the leading order constant for SEP was not stated explicitly in \cite{berzeg}, most ingredients have already been included there. The main tool is the Wiener--It\^o analysis on Bernoulli random variables. The chaos expansion yields
	\begin{align}\label{eq.Fock}
		L^2(\X, \fil, \Pr) = \bigoplus_{n=0}^{\infty}\HH_n,
	\end{align}
	where $\HH_n$ is the subspace expanded by the normalized cylinder function $\prod_{i=1}^n (\eta_{x_i} - \rho)$. Every local function $u$ can be written as a sum in $L^2$
	\begin{align}\label{eq.chaos}
		u = \sum_{n=0}^\infty \Pi_{n} u,
	\end{align}
	with the projection $\Pi_{n} u \in \HH_n$. One nice property of SEP says that $\Pb_t$ is closed on $\HH_{n}$, thus it commutes with the projection operator
	\begin{align}\label{eq.commute}
		\Pb_t \Pi_{n} u = \Pi_{n} \Pb_t u.
	\end{align}	
	The evolution of $\Pb_t \Pi_{n} u$ is quite similar as the discrete heat equations, and the following two observations conclude a version \eqref{eq.main} for SEP
	\begin{align}\label{eq.varPb}
		\var_{\rho}[\Pb_t u] = \frac{\tilde u' (\rho)^2 \chi(\rho)}{\sqrt{(8\pi t)^{d} \det [\DDb]}} + o(t^{-\frac{d}{2}}).
	\end{align}
	\begin{enumerate}
		\item $\Pb_t \Pi_{1} u$ is the linear statistic and follows exactly the discrete heat equation as the solvable case in \eqref{eq.generator}. Then local CLT entails the asymptotic decay \eqref{eq.main} with the correct leading order constant.
		\item $\Pb_t \Pi_{n} u$ for $n \geq 2$ evolves like a discrete heat equation in $(\Zd)^n$, so it has a faster decay of order $t^{-\frac{nd}{2}}$ and is negligible in \eqref{eq.main}. Due to the exclusion rule, its rigorous proof is non-trivial and requires efforts, but has already been presented in \cite[Section~6, Theorem~17]{berzeg}.
	\end{enumerate}
	
	\subsubsection{Homogenization}
	The semigroup defined by \eqref{eq.Generator} is generally not closed under chaos expansion, so \eqref{eq.commute} does not hold for $P_t$. The homogenization aims to reduce $P_t$ to $\Pb_t$: recall the diffusion matrix $\DD(\rho)$ defined in \eqref{eq.Einstein}, we can then find a transition matrix $Q$ such that SEP defined in \eqref{eq.GeneratorEffective} has the same diffusion matrix $\DDb \equiv \DD(\rho)$. This type of construction is not unique, but every semigroup $\Pb_t = e^{t\Lb}$ has the similar long-time behavior as $P_t$. For this reason, we consider such $\Pb_t$ as a \emph{homogenized semigroup} and expect
	\begin{align}\label{eq.homoRough}
		P_t \simeq \Pb_t.
	\end{align}
	Especially, we need a quantitative estimate with respect to $t$.
	
	There are numerous references on the homogenization of PDEs and here we just list some of them. A classical reference is \cite{bensoussan1979boundary} by Bensoussan, Lions, and Papanicolaou. In the periodic coefficient setting, the homogenization of parabolic semigroup was derived by Zhikov and Pastukhova in \cite{zhikov2006estimates} with a sharp explicit rate. The quantitative homogenization in stochastic setting can be found in the early work \cite{NS} by Naddaf and Spencer, and a lot of results emerge in the last decade, especially since the work \cite{gloria2011optimal} by Gloria and Otto. We refer to the monograph \cite{AKMbook} by Armstrong, Kuusi, and Mourrat; see also another more recent monograph \cite{armstrong2022elliptic} and an informal introduction \cite{informal}. These results are also applied to a lot of models in probability and statistical mechanics; see \emph{``the historical remarks and further reading''} in \cite[Chapter~5]{armstrong2022elliptic}.
	
	The idea of homogenization is not new in particle systems, as the link between the two topics was revealed in the work \cite{varadhanII} of Varadhan. In the literature, it is usually mentioned as \emph{Varadhan's argument}, and was largely applied to the non-gradient models; see \cite{quastel, KLO94, varadhan1997mixing, fuy} for examples. In the last decade, \emph{the exclusion process in random environment} attracted attention, and the homogenization theory was utilized in \cite{quastel2006disorder, faggionato2003disorder, gonccalves2008scaling, jaranonhomogeneous, jara2006, faggionato2008,faggionato2022}. The Hodge decomposition, as a key ingredient in Varadhan's argument, was also discussed in \cite{bannai2024topological, bannai2021varadhan, BS25}, and can be extended to a large family of particle systems.
	
	Under the framework of \cite{AKMbook}, the quantitative homogenization theory for particle system was at first carried on a continuum space \cite{bulk, giunti2021smoothness, gu2024quantitative} by Giunti, Mourrat, Nitzschner, and the first author. Recently, in \cite{funaki2024quantitative}, Funaki, Wang and the first author extended the theory to the non-gradient exclusion process \eqref{eq.Generator} and proved the quantitative hydrodynamic limit. This work overcame the difficulty from the exclusion rule, thus paved way for the present paper. 
	
	Recall the object \eqref{eq.homoRough} for a general local function $u$. Combining the basis in \cite{funaki2024quantitative} and the techniques in \cite{gu2024quantitative}, the present paper implements a two-scale expansion in Wiener--It\^o analysis, and obtains that
	\begin{align}\label{eq.homoMixed}
		\norm{(P_t - \Pb_t) u}_{L^2}^2 \leq C \Ll(t^{-2\beta} \norm{u}_{L^2}^2 + \sum_{n=2}^{\infty}t^{-\frac{3nd}{8}}||| \Pi_n u |||^2_n\Rr).
	\end{align} 
	Here $\beta > 0$ is a fixed rate, and $L^2$ is a shorthand notation of $L^2(\X, \fil, \Pr)$,  and $||| \cdot |||_n$ is the triple norm on $\HH_n$. Viewing \eqref{eq.homoMixed}, the two semigroups are close for large $t$.
	
	\subsubsection{Regularization}
	Unfortunately, compared to the leading order in \eqref{eq.main}, the homogenization of rate $t^{-\beta}$ in \eqref{eq.homoMixed} is not necessarily small enough. A refined estimate is needed, but the solution turns out to be the spatial regularization in \cite{jlqy}.
	
	Let us review the approach in \cite{jlqy}. The four authors proposed a regularized version $P_t R_{K(t)}u$ with the following decomposition
	\begin{align}\label{eq.KeyDecomposition_JLQY}
		P_t u =  (P_t u -  P_t R_{K(t)}u) + P_t R_{K(t)}u.
	\end{align}
	Precisely, the regularization operator is defined as 
	\begin{align}\label{eq.RKt}
		R_{K(t)}u := \frac{1}{\vert \La_{K(t)} \vert}\sum_{x \in \La_{K(t)}}\tau_x u,
	\end{align}
	with $\La_{K(t)}$ a box of side length $K(t) \simeq t^{\frac{1-\epsilon}{2}}$. The choice $\epsilon > 0$ can be arbitrarily close to $0$ but strictly positive, so $K(t)$ is always mesoscopical compared to the diffusive scale $t^{\frac{1}{2}}$. This explains $P_t u \simeq P_t R_{K(t)}u$, and one key estimate in \cite{jlqy} stated
	\begin{align}\label{eq.regularization_JLQY}
		\var[(P_t u -  P_t R_{K(t)}u)] = o(t^{-\frac{d}{2}}).
	\end{align}

	The proof of \eqref{eq.regularization_JLQY} is quite robust and can be adapted in a lot of situations, including \cite{lanyau, ccr, gu2020decay}. The bottleneck to the sharp Gaussian decay is actually $\var[P_t R_{K(t)}u]$.  In the zero-range model, \eqref{eq.JLQY} was reached using the Boltzmann--Gibbs principle. For other models, it is less good, but $P_t R_{K(t)}u$ is easier to treat than $P_t u$, because the translation in \eqref{eq.RKt} creates spatial independence. Recall that $u$ is a local function, then a naive bound yields
	\begin{align}\label{eq.varRKt}
		\var[P_t R_{K(t)}u] \leq \var[R_{K(t)}u]  \simeq C t^{-\frac{(1-\epsilon)d}{2}}.
	\end{align}
	This upper bound is only suboptimal for an arbitrarily small exponent $\epsilon$.

	Now, we check the new decomposition \eqref{eq.KeyDecomposition} and notice:
	\begin{itemize}
		\item The term $(P_t u -  P_t R_{K(t)}u)$ is as good as \eqref{eq.regularization_JLQY}.
		\item The term $ \Pb_t R_{K(t)}u$ yields \eqref{eq.varPb} since the semigroup is of SEP.
		\item Whatever $\beta > 0$ in \eqref{eq.homoMixed} is,  it fills the last gap $\epsilon$ above for $(P_t - \Pb_t) R_{K(t)}u$.
	\end{itemize}
	Therefore, we get the desired result \eqref{eq.main}.

	\subsection{Organization of paper}
	All the claims and heuristics will be verified in the rest of paper. We will review some facts about the discrete heat equation in Section~\ref{sec.pre}. Afterward, the details of the three ingredients will be explained by order in Sections~\ref{sec.SEP},~\ref{sec.HomoLinear},~\ref{sec.Reg}: Section~\ref{sec.HomoLinear} involves the homogenization argument, which is new and will be the most technical part of the paper. Some arguments in Sections~\ref{sec.SEP},~\ref{sec.Reg} appeared in the previous work \cite{berzeg,jlqy}, but we still recap them to make the proof self-contained.


	\section{Preliminaries}\label{sec.pre}
	
	\subsection{Notations}\label{subsec.notation}
	\subsubsection{Probability space}
	For every $\Lambda \subset \Zd$, we denote by $\fil_{\Lambda}$ the $\sigma$-algebra generated by $(\eta_x)_{x \in \Lambda}$ and we write $\fil$ short for $\fil_{\Zd}$. Given $\rho \in (0,1)$ as the density of particle, let $\Pr = \operatorname{Bernoulli}(\rho)^{\otimes \Zd}$ stand for the Bernoulli product measure on $\X$, thus $(\X, \fil, \Pr)$ is the triplet of probability space most used in this paper. For the expectation under $\Pr$, we use the notation $\bracket{ \cdot }_{\rho}$ or $\Er[\cdot]$. We make use of $\P_{\rho, \Lambda}, \bracket{\cdot}_{\rho, \Lambda}$ when we restrict our measure on $(\eta_x)_{x \in \Lambda}$. We also denote by $\P_{\Lambda, N, \zeta}$ and $\bracket{\cdot}_{\Lambda, N, \zeta}$ for the probability and expectation under the canonical ensemble, i.e. $N$ particles distributed uniformly on different sites of $\Lambda$ with the configuration $\zeta$ on $\Lambda^c$. We usually omit $\zeta$ and just write them as  $\P_{\Lambda, N}$ and $\bracket{\cdot}_{\Lambda, N}$. 
	
	\subsubsection{Function spaces and norms}
	Given $p \geq 1$, the norm of the space $L^p(\X, \fil, \Pr)$ is usually written as $L^p$. Meanwhile, we use $L^p(\Rd)$ to highlight the $L^p$-norm with respect to the usual Lebesgue measure. Concerning the function on a countable set $V$, we define the norm
	\begin{align}\label{eq.lp}
		\norm{f}_{\ell^p(V)} := \Ll(\sum_{x \in V} \vert f(x)\vert^p\Rr)^{\frac{1}{p}}.
	\end{align} 
	For example, $\ell^p(\Zd)$ stands for the $\ell^p$-norm of functions defined on $\Zd$. Specifically, we denote by $\vert \cdot \vert_p$ as the $\ell^p$-distance for the vector in $\Rd$, and keep $\vert \cdot \vert$ for the Euclidean distance on $\Rd$.
	
	A function $f$ defined on $\Zd$ is of support $\Lambda$, then $f = 0$ on $\Lambda^c$, and we use $C_c(\Lambda)$ to represent the set of such functions. Meanwhile, concerning a function $F$ on $(\X, \fil, \Pr)$, we say it is of support $\Lambda \subset \Zd$ if $F$ is $\fil_{\Lambda}$-measurable, and we denote these functions by $\F_0(\Lambda)$. The support of a function is written as $\supp(\cdot)$.

	\subsubsection{Geometry}
	For every finite set $\Lambda \subset \Zd$, we define its volume and diameter respectively as
	\begin{align}\label{eq.defVolumeDiameter}
		\vert \Lambda\vert := \#\{x : x\in \Lambda\}, \qquad \diam(\Lambda) := \max\{\vert x - y\vert:x,y\in \Lambda\}.
	\end{align}
	We denote by $\La_L(x)$ the lattice cube centered at $x$ of side length $2L+1$ 
	\begin{align}\label{eq.defLa}
		\La_L(x):=x+\{-L,\cdots,L\}^d,
	\end{align}
	and we follow the convention $\La_L \equiv \La_L(0)$.

	\subsubsection{Conventions of constant}
	Given $u$ a local function on configuration space, we define the constant $\ell_u$ as 
	\begin{align}\label{eq.deflu}
		\ell_u := \min\{L \in \N_+: u \in \F_0(\Lambda_L)\}.
	\end{align}
	For every $\alpha > 0$, the notations $O(t^{-\alpha})$ and $o(t^{-\alpha})$ stand for the remainder in the relaxation to equilibrium. They are defined as
	\begin{align}\label{eq.defOo}
		\limsup_{t \to \infty} t^\alpha \vert O(t^{-\alpha}) \vert < \infty, \qquad \lim_{t \to \infty} t^\alpha o(t^{-\alpha}) = 0.
	\end{align}
	These remainders can depend on $d, \lambda, \r, \rho$ and the local function $u$ in the concrete statement.
	
	\subsection{Nash estimate for discrete heat equation}
	We recall some classical results about Nash estimate in the discrete setting. Let $Q$ be the transition matrix associated to a continuous-time symmetric random walk $(S_t)_{t \geq 0}$ on $\Zd$, then it satisfies
	\begin{enumerate}
		\item $Q_y \geq 0$;
		\item $\sum_{y \in \Zd} Q_y = 1$;
		\item $Q_{y} = Q_{-y}$ for all $y \in \Zd$.
	\end{enumerate}
	It also defines a discrete Laplacian $\frac{1}{2}\Delta_Q$, which is the generator of $(S_t)_{t \geq 0}$
	\begin{align}\label{eq.defDeltaQ}
		\Ll(\frac{1}{2}\Delta_Q f\Rr)(x):= \sum_{y\in \Zd}Q_{y-x}(f(y) - f(x)).
	\end{align}
	When the random walk $(S_t)_{t \geq 0}$ has finite second moment increment, its covariance matrix $\cm$ is well-defined as 
	\begin{align}\label{eq.defCov}
		\cm_{ij} := \sum_{y \in \Zd} Q_y y_i y_j.
	\end{align}
	The following estimate is classical for the associated discrete heat equation.
	\begin{lemma}\label{lem.NashZd}
		Suppose that we are given a symmetric transition matrix $Q$ with finite range. There exists a finite positive constant $C(d)$, such that for every function $f$, the solution of the equation  
		\begin{equation}\label{eq.heatZd}
			\left\{
			\begin{aligned}
				\partial_t f_t &=  \Ll(\frac{1}{2}\Delta_Q \Rr)f_t, \qquad t>0,\\
				f_0 &=f,
			\end{aligned}
			\right.
		\end{equation}
		satisfies the following estimate:
		\begin{equation}\label{eq.NashZd}
			\Ll|\norm{f_t}_{\ell^2(\Zd)}-\frac{\vert \mathbf{m}_{f} \vert}{\Ll((4\pi t)^d \det [\cm]\Rr)^{\frac{1}{4}}}\Rr|\leq C t^{-\frac{d+2}{4}} \sum_{x \in \Zd} \vert x f(x)\vert, \qquad \forall t>0,
		\end{equation}
		where $\mathbf{m}_{f} := \sum_{x\in\Zd}f(x)$.
	\end{lemma}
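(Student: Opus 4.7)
The plan is to analyze the problem on the Fourier side via Plancherel's theorem. Let $\hat f(\xi) := \sum_{x \in \Zd} f(x) e^{-i x \cdot \xi}$ denote the Fourier series of $f$, and let $m(\xi) := \sum_{y \in \Zd} Q_y (1 - \cos(y \cdot \xi))$ be the Fourier symbol of $-\frac{1}{2}\Delta_Q$; this $m$ is real, even, and non-negative by the symmetry of $Q$. Since $\widehat{f_t}(\xi) = e^{-t m(\xi)} \hat f(\xi)$, Plancherel yields
\[
\norm{f_t}_{\ell^2(\Zd)}^2 = \int_{[-\pi,\pi]^d} |\hat f(\xi)|^2 e^{-2 t m(\xi)} \, \frac{d\xi}{(2\pi)^d}.
\]
The finite range of $Q$ and the non-degeneracy of $\cm$ give a two-sided bound $c |\xi|^2 \leq m(\xi) \leq C |\xi|^2$ on $[-\pi,\pi]^d$, together with the Taylor expansion $m(\xi) = \frac{1}{2} \xi^T \cm \xi + O(|\xi|^4)$ near the origin.

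Next, the target can itself be rewritten as a full-space Gaussian integral:
\[
\Ll(\frac{|\mathbf{m}_{f}|}{((4 \pi t)^d \det[\cm])^{1/4}}\Rr)^2 = |\hat f(0)|^2 \int_{\Rd} e^{-t \xi^T \cm \xi} \, \frac{d\xi}{(2\pi)^d} =: B^2.
\]
I decompose $|\norm{f_t}_{\ell^2(\Zd)}^2 - B^2|$ into three errors: (i) extending the integration domain from $[-\pi,\pi]^d$ to $\Rd$, which is exponentially small in $t$ thanks to $m(\xi) \geq c|\xi|^2$; (ii) replacing $e^{-2 t m(\xi)}$ by $e^{-t \xi^T \cm \xi}$, controlled via $|e^{-2 t m(\xi)} - e^{-t \xi^T \cm \xi}| \leq C t |\xi|^4 e^{-c t |\xi|^2}$ on a neighbourhood of $0$, which contributes $O(|\mathbf{m}_{f}|^2 t^{-(d+2)/2})$ after integration; (iii) replacing $|\hat f(\xi)|^2$ by $|\hat f(0)|^2$, which I estimate via
\[
\bigl| |\hat f(\xi)|^2 - |\hat f(0)|^2 \bigr| \leq 2 |\mathbf{m}_{f}| M_1 |\xi| + M_1^2 |\xi|^2, \qquad M_1 := \sum_{x \in \Zd} |x f(x)|,
\]
using the elementary bound $|\hat f(\xi) - \hat f(0)| \leq |\xi| M_1$. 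Integrated against $e^{-2 t m(\xi)}$, each factor of $|\xi|$ converts to a factor of $t^{-1/2}$, producing contributions of order $|\mathbf{m}_{f}| M_1 t^{-(d+1)/2}$ and $M_1^2 t^{-(d+2)/2}$.

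Finally, I pass from $|\norm{f_t}^2 - B^2|$ to $|\norm{f_t} - B|$ via the identity $|A - B| = |A^2 - B^2|/(A + B)$. When $\mathbf{m}_{f} \neq 0$, we have $A + B \geq B \sim |\mathbf{m}_{f}| t^{-d/4}$, so dividing the dominant error $|\mathbf{m}_{f}| M_1 t^{-(d+1)/2}$ by $A + B$ gives exactly $C M_1 t^{-(d+2)/4}$; the other pieces are of strictly higher order in $t^{-1}$ and are absorbed for $t$ large by adjusting the constant. When $\mathbf{m}_{f} = 0$, so $B = 0$, the bound $|\hat f(\xi)|^2 \leq M_1^2 |\xi|^2$ gives $\norm{f_t}_{\ell^2(\Zd)}^2 \leq C M_1^2 t^{-(d+2)/2}$ directly. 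The main technical point is not the Fourier analysis itself, but the bookkeeping that ensures only the first moment $M_1 = \sum_{x} |x f(x)|$ appears in the final bound (rather than $|\mathbf{m}_f|$ or higher moments of $f$); this relies on the Gaussian concentration of $e^{-2 t m(\xi)}$ on the scale $|\xi| \sim t^{-1/2}$, which promotes the first-order Fourier estimate $|\hat f(\xi) - \hat f(0)| \lesssim M_1 |\xi|$ into precisely the claimed $t^{-(d+2)/4}$ rate.
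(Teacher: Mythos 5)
Your Fourier-analytic route is a genuine alternative to the paper's, which compares $\bar p_t$ to the Gaussian $\Psi_t$ via the Lawler--Limic local CLT and then invokes the Duoandikoetxea--Zuazua moment theorem on $\Rd$. The ingredients you set up are sound: Plancherel, the two-sided bound $m(\xi)\asymp|\xi|^2$ on $[-\pi,\pi]^d$ (valid for the $Q$ of Lemma~\ref{lem.CovRW} because $Q_{e_i}\geq\tfrac14$ makes the walk aperiodic, an assumption the paper's local CLT also implicitly needs), the global Taylor error $|m(\xi)-\tfrac12\xi^{\T}\cm\xi|\leq C|\xi|^4$ on $[-\pi,\pi]^d$, and the Lipschitz bound $|\hat f(\xi)-\hat f(0)|\leq M_1|\xi|$ with $M_1:=\sum_{x\in\Zd}|xf(x)|$. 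The upshot is a more self-contained proof that bypasses both external citations.

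The one real gap is your final sentence, passing from $|A^2-B^2|$ to $|A-B|$. Dividing by $A+B\geq B\asymp|\mathbf{m}_f|t^{-d/4}$ leaves, besides $M_1 t^{-(d+2)/4}$, the terms $|\mathbf{m}_f|t^{-(d+4)/4}$ and $(M_1^2/|\mathbf{m}_f|)t^{-(d+4)/4}$, and ``absorbed for $t$ large by adjusting the constant'' is not a valid disposal of either: the constant $C(d)$ must be uniform over $f$ and over all $t>0$, whereas the ratios $|\mathbf{m}_f|/M_1$ and $M_1/|\mathbf{m}_f|$ are unbounded as $f$ varies. The $(M_1^2/|\mathbf{m}_f|)$ piece is an artifact of squaring and disappears if you bound $|A-B|$ directly by the reverse triangle inequality in $L^2([-\pi,\pi]^d,\tfrac{d\xi}{(2\pi)^d})$ applied to $|\hat f(\xi)|e^{-tm(\xi)}$ and $|\hat f(0)|e^{-\frac{t}{2}\xi^{\T}\cm\xi}$, then split the integrand pointwise; this yields $|A-B|\leq C\big(M_1 t^{-(d+2)/4}+|\mathbf{m}_f|t^{-(d+4)/4}\big)$ up to exponentially small corrections. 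The $|\mathbf{m}_f|t^{-(d+4)/4}$ piece, however, cannot be removed, and it exposes a mild imprecision in Lemma~\ref{lem.NashZd} itself: for $f=\delta_0$ the right-hand side of \eqref{eq.NashZd} is $0$, yet $\norm{\bar p_t}_{\ell^2(\Zd)}^2=\bar p_{2t}(0)$ differs from $B^2=\Psi_{2t}(0)$ by a generically nonzero $O(t^{-(d+2)/2})$ local CLT correction. The paper's own proof carries the same type of error --- \eqref{eq.NashPass1}--\eqref{eq.NashPass3} each scale with $\norm{f}_{\ell^1(\Zd)}$ --- so it actually establishes $Ct^{-(d+2)/4}(\norm{f}_{\ell^1(\Zd)}+M_1)$, and your repaired argument gives the comparable $Ct^{-(d+2)/4}(M_1+t^{-1/2}|\mathbf{m}_f|)$. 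Neither variant weakens the downstream applications in Proposition~\ref{prop.AvePbarDecay} and Lemma~\ref{lem.DecayPbarRegul}, where the $M_1$ term dominates.
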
	
	\begin{proof}
		Throughout the proof, we denote respectively by $\ast$ and $\circledast$ the convolution operator in $\Zd$ and $\Rd$
		\begin{align}\label{eq.defConvolution}
			(g \ast h)(x) := \sum_{y \in \Zd} g(x-y)h(y), \qquad (g \circledast h)(x) := \int_{\Rd}  g(x-y)h(y)\, \d y.
		\end{align}
		Let $\bar{p}_t$ be the semigroup $e^{\Ll(\frac{1}{2}\Delta_Q \Rr)t}$, then $f_t = \bar{p}_t \ast f$. Its behavior of $\ell^2$-norm can be studied in three steps.
		
		\textit{Step~1: local CLT.} 
		The classical Nash estimate applies to \eqref{eq.heatZd} and yields
		\begin{align*}
			\norm{f_t}_{\ell^p(\Zd)} \leq C(d,p)t^{-\frac{d}{2}(1-\frac{1}{p})}\norm{f}_{\ell^1(\Zd)}. 
		\end{align*}
		A decay of type $t^{-\frac{d}{4}}$ then appears when $p = 2$. Because of the local CLT, the large-scale behavior of $\bar{p}_t$ is close to the Gaussian distribution characterized by the covariance matrix $\cm$
		\begin{equation*}
			\Psi_t(x):=\frac{1}{\sqrt{(2\pi t)^d \det[\cm]}}e^{-\frac{x\cdot\cm^{-1} x}{2t}},\qquad x\in\Rd.
		\end{equation*}
		Then we apply the local CLT \cite[Theorem 2.1.3]{lawler2010random} to $(\bar{p}_t - \Psi_t)$, which will bring another factor $t^{-\frac{1}{2}}$. We combine the observations above and obtain that
		\begin{align}\label{eq.NashPass1}
			\norm{\bar{p}_t \ast f-\Psi_t \ast f}_{\ell^2(\Zd)} \leq C t^{-\frac{d+2}{4}}\norm{f}_{\ell^1(\Zd)}.
		\end{align}
		
		\textit{Step~2: $\Rd$ extension.}	The term $\Psi_t \ast f$ is naturally close to the convolution in continuous space. Let $[f]$ be the constant extension in $\Rd$ defined as 
		\begin{align*}
			\forall x\in \Zd, y \in x +\Ll[-\frac{1}{2}, \frac{1}{2}\Rr)^d, \qquad [f](y) := f(x).
		\end{align*}
		The convolution $\Psi_t \circledast [f]$ in $\Rd$ can be expressed as  
		\begin{align*}
			(\Psi_t \circledast [f])(x) = \int_{\Rd} \Psi_t(x-y)[f](y) \, \d y = \sum_{z \in \Zd} \Ll(\int_{z +\Ll[-\frac{1}{2}, \frac{1}{2}\Rr)^d} \Psi_t(x-y) \, \d y\Rr) f(z).
		\end{align*}
		Comparing it with \eqref{eq.defConvolution},  and using the regularity of $\Psi_t$, we have 
		\begin{align*}
			\vert (\Psi_t \circledast [f])(x) - (\Psi_t \ast f)(x)\vert &= \Ll\vert \sum_{z \in \Zd} \Ll(\int_{z +\Ll[-\frac{1}{2}, \frac{1}{2}\Rr)^d} \Psi_t(x-y) - \Psi_t(x-z)  \, \d y\Rr) f(z)\Rr\vert \\
			&\leq \frac{C}{t} \sum_{z \in \Zd}  \Psi_t(x-z)  \vert (x-z) f(z)\vert.\\
			&=\frac{C}{\sqrt{t}}\Ll(\tilde{\Psi}_t\ast |f|\Rr)(x),
		\end{align*}
		where $\tilde{\Psi}_t(x)= \frac{|x|}{\sqrt{t}}\cdot\Psi_t(x)$.
		By Young's convolution inequality, we have 
		\begin{align}\label{eq.NashPass2}
			\norm{\Psi_t \ast f-\Psi_t \circledast [f]}_{\ell^2(\Zd)} \leq \frac{C}{\sqrt{t}}\norm{\tilde{\Psi}_t}_{\ell^2(\Zd)}\norm{f}_{\ell^1(\Zd)}\leq Ct^{-\frac{d+2}{4}}\norm{f}_{\ell^1(\Zd)}.
		\end{align}
		The behavior of $\norm{\Psi_t \circledast [f]}_{\ell^2(\Zd)}$ close to $\norm{\Psi_t \circledast [f]}_{L^2(\Rd)}$ can be derived through similar estimates,
		\begin{equation}\label{eq.NashPass3}
			\Ll|\norm{\Psi_t \circledast [f]}_{\ell^2(\Zd)}-\norm{\Psi_t \circledast [f]}_{L^2(\Rd)}\Rr|\leq Ct^{-\frac{d+2}{4}}\norm{f}_{\ell^1(\Zd)}.
		\end{equation}
		
		\textit{Step~3: Gaussian expansion.} The quantity $\mathbf{m}_f$ can be considered as the mass of the initial condition, then the long-time behavior of $\norm{\Psi_t \circledast [f]}_{L^2(\Rd)}$ is close to $\norm{\mathbf{m}_{f} \Psi_t }_{L^2(\Rd)}$: using \cite[Theorem 4]{duoandikoetxea1992moments} with $k=0$, $q=2$, $p=1$ as the parameters there, we have
		\begin{align}\label{eq.NashPass4}
			\norm{\Psi_t \circledast [f]-\mathbf{m}_{f} \Psi_t }_{L^2(\Rd)}\leq Ct^{-\frac{d+2}{4}}\sum_{x \in \Zd} \vert x f(x)\vert.
		\end{align}
		We combine \eqref{eq.NashPass1}, \eqref{eq.NashPass2}, \eqref{eq.NashPass3} and \eqref{eq.NashPass4} to arrive at the desired result \eqref{eq.NashZd}.
	\end{proof}

	\subsection{SEP associated to the diffusion matrix}
	
	The object of this paragraph is to construct a SEP with the same diffusion matrix $\DD(\rho)$. Recall that $\DD(\rho)$ is a positive definite matrix, but not necessarily diagonal.
	
	We start from a more basic example. The following lemma asserts that for every positive definite matrix $\cm$, we can construct a continuous-time symmetric random walk on $\Zd$ with $\cm$ as its covariance matrix.
	
	\begin{lemma}\label{lem.CovRW}
		Given a matrix $\cm\in \mathbb{R}_{sym}^{d\times d}$, satisfying $\id \leq \cm \leq  C \id$, then there exists a transition matrix $(Q_x)_{x \in \Zd}$ such that $\cm$ is the covariance matrix of a continuous-time symmetric random walk on $\Zd$ associated to $Q$, i.e. 
		\begin{equation}\label{eq.CovRW}
			\forall 1 \leq i,j \leq d, \qquad	\cm_{ij} = \sum_{x \in \Zd }Q_x x_i x_j .
		\end{equation}
		Moreover, following two conditions hold for $Q$.
		\begin{enumerate}
			\item For the canonical basis $\{e_i\}_{1 \leq i \leq d}$, we have $Q_{e_i} \geq 
			\frac{1}{4}$.
			\item The support of the transition matrix satisfies $\supp(Q) \subset \Lambda_{4Cd^2}$
		\end{enumerate} 
	\end{lemma}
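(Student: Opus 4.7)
The plan is an explicit construction of $Q$ as a sum of three parts: a nearest-neighbor reserve, a spectral approximation on the lattice, and a small correction. First, set $Q^{(0)}_{\pm e_i} := \tfrac{1}{2}$ for each $i = 1, \ldots, d$. This contributes exactly $\id$ to the covariance and gives every $Q_{e_i}$ a head start of $\tfrac{1}{2}$, leaving a uniform slack of $\tfrac{1}{4}$ for the subsequent corrections. The remaining target is $M := \cm - \id$, which is positive semidefinite by the assumption $\cm \geq \id$ and satisfies $M \leq (C-1)\id$.

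Next, diagonalize $M_{ij} = \sum_{k=1}^d \lambda_k (v_k)_i (v_k)_j$ with orthonormal eigenvectors $v_k$ and eigenvalues $\lambda_k \in [0, C-1]$. Fix $N := 4Cd^2$ and let $\tilde u_k \in \Zd$ be the componentwise-nearest lattice vector to $N v_k$, so that $|\tilde u_k - N v_k|_\infty \leq \tfrac{1}{2}$ and $|\tilde u_k|_\infty \leq N$. Put $Q^{(1)}_{\pm \tilde u_k} := \lambda_k/(2N^2)$; these jumps realize $\tilde M_{ij} := \sum_k (\lambda_k/N^2)(\tilde u_k)_i (\tilde u_k)_j$. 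Writing $(v_k)_i - (\tilde u_k)_i/N = \epsilon_{k,i}$ with $|\epsilon_{k,i}| \leq 1/(2N)$ and expanding $(v_k)_i(v_k)_j - (\tilde u_k)_i(\tilde u_k)_j/N^2$ entry-wise, together with $|v_k|_\infty \leq 1$, gives
\begin{align*}
|M_{ij} - \tilde M_{ij}| \;\leq\; \frac{2}{N}\sum_{k=1}^d \lambda_k \;=\; \frac{2\operatorname{tr}(M)}{N} \;\leq\; \frac{2Cd}{N} \;=\; \frac{1}{2d}.
\end{align*}

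The symmetric residual $E := M - \tilde M$ is then absorbed by pair and nearest-neighbor corrections. For $i < j$ set $Q^{(2)}_{\pm(e_i + \sigma_{ij} e_j)} := |E_{ij}|/2$ with $\sigma_{ij} := \operatorname{sgn}(E_{ij})$, which reproduces $E_{ij}$ exactly and adds $|E_{ij}|$ to each of $\cm_{ii}, \cm_{jj}$; for each $i$ set $Q^{(2)}_{\pm e_i} := (E_{ii} - \sum_{j \neq i} |E_{ij}|)/2$ to balance the diagonal. The total $Q := Q^{(0)} + Q^{(1)} + Q^{(2)}$ is symmetric and reproduces $\cm$ by direct computation. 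Its support lies in $\La_N \cup \La_2 = \La_{4Cd^2}$, yielding the second conclusion. For the non-degeneracy bound, the nearest-neighbor mass satisfies
\begin{align*}
Q_{e_i} \;=\; \tfrac{1}{2} + \tfrac{1}{2}\Bigl(E_{ii} - \sum_{j \neq i}|E_{ij}|\Bigr) \;\geq\; \tfrac{1}{2} - \tfrac{d}{2}\|E\|_\infty \;\geq\; \tfrac{1}{4},
\end{align*}
and the smallness $\|E\|_\infty \leq 1/(2d)$ also ensures that every $Q$-value is non-negative.

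The hardest step is the quantitative rounding estimate in the spectral decomposition; everything else is bookkeeping. The precise factor $Cd^2$ in the support size emerges naturally from $\operatorname{tr}(M) \leq Cd$ combined with the need to absorb $O(d)$ off-diagonal corrections per row while keeping $Q_{e_i} \geq 1/4$.
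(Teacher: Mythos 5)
Your proof is correct and follows essentially the same strategy as the paper: isolate the $\id$ contribution on nearest neighbors, diagonalize the remainder and round the scaled eigenvectors $Nv_k$ to lattice points with $N=4Cd^2$, then absorb the resulting entrywise error $E$ into nearest-neighbor and $e_i+\mathrm{sgn}(E_{ij})e_j$ jumps, using $\|E\|_\infty\leq 1/(2d)$ to keep every $Q$-value non-negative and $Q_{e_i}\geq 1/4$. The paper folds the identity into the remainder matrix $\mathbf R$, uses floor rounding rather than nearest-point, and symmetrizes a one-sided $\tilde Q$ at the end, but these are cosmetic variations on the same decomposition and yield the same support bound and lower bound on $Q_{e_i}$.
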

	\begin{proof}	
		Following \eqref{eq.defCov}, it suffices to prove the existence of a transition matrix $(\tilde{Q}_x)_{x \in \Zd}$ such that
		\begin{align}\label{eq.CovRWPre}
			\cm = \sum_{x \in \Zd } \tilde{Q}_x x x^{\T},
		\end{align}
		where we treat $x \in \Zd$ as a $d \times 1$ matrix, with $x^{\T}$ as its transpose. We then use the natural symmetrisation $Q_x := \frac{\tilde{Q}_x + \tilde{Q}_{-x}}{2}$, and obtain the desired symmetric transition matrix. The construction of \eqref{eq.CovRWPre} can be divided into two steps.

		\textit{Step~1: approximation of diagonalization.} Actually, \eqref{eq.CovRW} is obvious using diagonalization if the random walk is defined on $\Rd$. However, we need a random walk on $\Zd$, so a good approximation is needed. 
		Since $\cm\geq \id$, we decompose $\cm$ into
		\begin{equation*}
			\cm=\id+\tilde{\cm},
		\end{equation*}
		such that $\tilde \cm$ is also a positive semi-definite symmetric matrix. Then there exists an orthogonormal matrix $\mathbf P$ such that 
		\begin{equation*}
			\mathbf P^{\T} \tilde{\cm} \mathbf P=\mathbf \Lambda,
		\end{equation*}
		where $\mathbf \Lambda= \mathrm{diag}(\lambda_1,\cdots, \lambda_d)$ is a diagonal matrix with $0 \leq \lambda_i \leq  C$. Denoting by $\mathbf P=(p_1,p_2,\cdots, p_d)$, then we have the decomposition
		\begin{equation*}
			\tilde \cm=\sum_{i=1}^{d}\lambda_i p_i p_i^{\T}.
		\end{equation*}
		Notice that $ p_i$ is a vector in $\Rd$ instead of $\Zd$, thus we need some further approximation. We pick a very large $N \in \N$ and define 
		\begin{equation}\label{eq.SigmaApproximationP}
			\tilde { p}_i := \lfloor N  p_i \rfloor, \qquad  \tilde \lambda_i :=\frac{\lambda_i}{N^2},
		\end{equation}
		where $\lfloor y \rfloor=(\lfloor y_1 \rfloor, \lfloor y_2\rfloor,\cdots, \lfloor y_d \rfloor)$. Then the matrix $\tilde {\tilde \cm}$ defined by
		\begin{equation}\label{eq.SigmaApproximation}
			\tilde {\tilde \cm} :=\sum_{i=1}^{d}\tilde\lambda_i\tilde { p}_i\tilde { p}_i^{\T},
		\end{equation}
		is a good approximation of $\tilde \cm$ with error
		\begin{equation}\label{eq.SigmaError}
			\epsilon := \max_{1\leq i,j \leq d}\vert \tilde \cm_{ij}-\tilde {\tilde \cm}_{ij}\vert \leq \sum_{i=1}^{d}\lambda_i \Ll\vert p_i p_i^{\T} - \frac{\lfloor N  p_i \rfloor}{N} \frac{\lfloor N  p_j \rfloor}{N} \Rr\vert_{\infty} \leq \frac{2C d}{N},
		\end{equation}
		where $\vert\cdot\vert_\infty$ is defined as $\vert\cm|_\infty:=max_{1\leq i,j\leq d}|\cm_{ij}|$ for a matrix $\cm$. The value $N$ is determined later.
		
		\textit{Step~2: diagonalization of remainder.} We need to treat the remainder
		\begin{align}\label{eq.SigmaRemainder}
			\mathbf R:= \cm -\tilde{\tilde \cm} = \id +\tilde \cm-\tilde{\tilde \cm} .
		\end{align}
		Viewing \eqref{eq.SigmaError}, it satisfies the bound 
		\begin{equation}\label{eq.SigmaRemainderBound}
			\begin{split}
				\forall 1\leq i \neq j\leq d, \qquad \vert \mathbf R_{ij} \vert &\leq \ep,\\
				\forall 1\leq i\leq d, \qquad \vert \mathbf R_{ii}-1\vert &\leq \ep.
			\end{split}
		\end{equation}
		The symmetric matrix $\mathbf R$ can be decomposed to 
		\begin{equation}\label{eq.CovDecom2Sum}
			\mathbf R=\sum_{1\leq i < j\leq d}\vert \mathbf R_{ij}\vert(  e_i+ \mathrm{sgn}(\mathbf R_{ij})  e_j) (  e_i+ \mathrm{sgn}(\mathbf R_{ij})  e_j)^{\T}+\tilde {\mathbf R},
		\end{equation}
		where $\{  e_i\}_{1\leq i\leq d}$ is the canonical basis of $\Rd$ and $\tilde {\mathbf R}$ is a diagonal matrix defined as 
		\begin{align*}
			\tilde {\mathbf R}_{ii} := \mathbf R_{ii} - \sum_{j \neq i} \vert \mathbf R_{ij}\vert.
		\end{align*}
		We recall the bound in \eqref{eq.SigmaRemainderBound}. When $\ep\leq \frac{1}{2d}$, which corresponds to $N \geq 4Cd^2$ in \eqref{eq.SigmaError}, we have
		\begin{align}\label{eq.RttBound}
			\forall 1\leq i\leq d, \qquad \tilde {\mathbf R}_{ii} = \mathbf R_{ii} - \sum_{j \neq i} \vert \mathbf R_{ij} \vert \geq 1 - d \epsilon \geq \frac{1}{2}.
		\end{align}
		We just take $N = \lfloor 4Cd^2 \rfloor + 1$, then we can write
		\begin{equation}\label{eq.CovDecom3Sum}
			\tilde {\mathbf R} =\sum_{i=1}^d \tilde {\mathbf R}_{ii}  e_i  e_i^{\T}.
		\end{equation}
		Combining \eqref{eq.SigmaApproximation}, \eqref{eq.SigmaRemainder}, \eqref{eq.CovDecom2Sum}, \eqref{eq.CovDecom3Sum}, we obtain 
		\begin{align*}
			\cm = \sum_{i=1}^{d}\tilde\lambda_i\tilde { p}_i\tilde { p}_i^{\T} + \sum_{1\leq i < j\leq d}\vert \mathbf R_{ij}\vert(  e_i+ \mathrm{sgn}(\mathbf R_{ij})  e_j) (  e_i+ \mathrm{sgn}(\mathbf R_{ij})  e_j)^{\T}+\sum_{i=1}^{d}\tilde {\mathbf R}_{ii}  e_i  e_i^{\T},
		\end{align*}
		which is \eqref{eq.CovRWPre} and concludes \eqref{eq.CovRW}. The domination $Q_{e_i} \geq \frac{1}{4}$ comes from \eqref{eq.RttBound}. The support of transition matrix should be bounded by $\tilde { p}_i$ in \eqref{eq.SigmaApproximationP}, which gives  $\supp(Q) \subset \Lambda_{4Cd^2}$.
	\end{proof}
	
	We can derive the corresponding version in SEP.
	\begin{corollary}\label{cor.CovSEP}
		Given a matrix $\DD(\rho) \in \mathbb{R}_{sym}^{d\times d}$, satisfying $\id \leq \DD(\rho) \leq  \lambda \id$, then there exists a SEP such that $\DD(\rho)$ is its diffusion matrix. Moreover, there exists a positive constant $C_{\ref{cor.CovSEP}}(d, \lambda)$ such that for every $F\in L^2(\X,\F,\P_\rho)$, the following estimates hold:
		\begin{equation}\label{eq.EnergyLbarLower}
			\expec{F(-\bar{\L})F}_\rho\geq \frac{1}{8}\sum_{b\in(\Zd)^*}\expec{(\pi_b F)^2}_\rho,
		\end{equation}
		\begin{equation}\label{eq.EnergyLbarUpper}
			\expec{F(-\bar{\L})F}_\rho\leq C_{\ref{cor.CovSEP}}\expec{F(-\L)F}_\rho. 
		\end{equation}
	\end{corollary}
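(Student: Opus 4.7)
My plan is to combine Lemma~\ref{lem.CovRW} with two Dirichlet-form comparisons. For the construction, a SEP with symmetric rate $Q$ is a gradient model, and a direct current calculation shows its diffusion matrix equals $\tfrac{1}{2}\cm$, where $\cm_{ij} := \sum_{y \in \Zd} Q_y y_i y_j$. To arrange $\DDb = \DD(\rho)$ it therefore suffices to choose $\cm := 2\DD(\rho)$, which satisfies $2\id \leq \cm \leq 2\lambda \id$. Applying Lemma~\ref{lem.CovRW} to this $\cm$ supplies a finite-range symmetric transition matrix $Q$ on $\Zd$ with $Q_{e_i} \geq \tfrac{1}{4}$ and $\supp(Q) \subset \La_R$ for some $R = R(d,\lambda)$; we take $\bar{\L}$ to be the generator \eqref{eq.GeneratorEffective} built from this $Q$.

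For the lower bound \eqref{eq.EnergyLbarLower}, exchange invariance of $\P_\rho$ gives the standard identity
\begin{equation*}
\expec{F(-\bar{\L})F}_\rho = \tfrac{1}{4}\sum_{x,y \in \Zd}Q_{y-x}\expec{(\pi_{x,y}F)^2}_\rho.
\end{equation*}
Regrouping ordered pairs as undirected bonds doubles the per-bond contribution, and then dropping all but the nearest-neighbor terms produces a lower bound of $\tfrac{1}{2}Q_{e_i}\sum_{b \in (\Zd)^*}\expec{(\pi_b F)^2}_\rho$; combined with $Q_{e_i} \geq \tfrac{1}{4}$ this yields the claimed factor $\tfrac{1}{8}$.

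For the upper bound \eqref{eq.EnergyLbarUpper}, since $c_b \geq 1$ the analogous identity applied to $\L$ yields $\expec{F(-\L)F}_\rho \geq \tfrac{1}{2}\sum_{b \in (\Zd)^*}\expec{(\pi_b F)^2}_\rho$, so it suffices to control each $\expec{(\pi_{x,y}F)^2}_\rho$ with $\{x,y\} \in \supp(Q)$ by a sum of nearest-neighbor Dirichlet energies. I would realize the long exchange $\eta \mapsto \eta^{x,y}$ as a composition of $O(R)$ nearest-neighbor exchanges along a shortest lattice path from $x$ to $y$, expand $\pi_{x,y}F$ as the associated telescoping sum, apply Cauchy--Schwarz, and use exchange invariance of $\P_\rho$ to rewrite each intermediate term as a standard nearest-neighbor Kawasaki energy. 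Since $Q$ has finite range and is spatially homogeneous, each nearest-neighbor bond is visited by only $O(1)$ such path terms, giving \eqref{eq.EnergyLbarUpper} with a constant $C_{\ref{cor.CovSEP}}$ depending only on $d$ and $\lambda$. The only mildly delicate point is the explicit realization of a long swap by short swaps --- a ``push the particle from $x$ to $y$ along the path and pull back'' scheme needs $O(R)$ nearest-neighbor exchanges --- but once this bookkeeping is fixed, the rest of the estimate is routine.
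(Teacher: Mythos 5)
Your proposal is correct and follows essentially the same route as the paper: you construct $Q$ from Lemma~\ref{lem.CovRW} with $\cm = 2\DD(\rho)$, obtain \eqref{eq.EnergyLbarLower} from the explicit Dirichlet-form identity together with $Q_{e_i}\geq \tfrac14$, and prove \eqref{eq.EnergyLbarUpper} by the same moving-particle (path plus Cauchy--Schwarz) estimate combined with the finite range of $Q$ and the ellipticity $c_b\geq 1$. No gaps beyond the routine bookkeeping you already flag.
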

	\begin{proof}
		Let $\cm := 2 \DD(\rho)$ and we pick a symmetric transition matrix $Q$ associated to the covariance matrix $\cm$ in Lemma~\ref{lem.CovRW}, and follow the convention in \eqref{eq.GeneratorEffective} to define
		\begin{align*}
			\Lb = \frac{1}{2}\sum_{x \in \Zd} \sum_{y \in \Zd} Q_{y-x} \pi_{x,y}.
		\end{align*}
		Then we should notice that (see \eqref{eq.defDeltaQ})
		\begin{equation}\label{eq.Delta_Q_Closed}
			\begin{split}
				\Lb \Ll(\sum_{x \in \Zd} f(x) \eta_x\Rr) &= \frac{1}{2} \sum_{x \in \Zd} \sum_{y \in \Zd} Q_{y-x} \Ll(f(y) - f(x)\Rr) (\eta_x - \eta_y) \\
				&= \sum_{x \in \Zd} \sum_{y \in \Zd} Q_{y-x} \Ll(f(y) - f(x)\Rr) \eta_x \\
				&= \sum_{x \in \Zd} \Ll(\frac{1}{2}\Delta_Q f\Rr)(x)\eta_x. 
			\end{split}
		\end{equation}
		The associated diffusion matrix is $\frac{1}{2} \cm = \DD(\rho)$.
		
		For the constructed SEP and its generator $\bar{\L}$, we have an explicit expression for $\expec{F(-\bar{\L})F}_\rho$:
		\begin{equation*}
			\expec{F(-\bar{\L})F}_\rho=\frac{1}{4}\sum_{x\in\Zd}\sum_{y\in\Zd}Q_{y}\expec{(\pi_{x,x+y}F)^2}_\rho.
		\end{equation*}
		\eqref{eq.EnergyLbarLower} then follows from the property $(1)$ in Lemma~\ref{lem.CovRW}.
		For any $y\in\Zd$, we consider a geodesic path in $\ell^1$ distance on lattice $\Zd$ connecting $x$ and $x+y$:
		\begin{equation*}
			x_0=x\rightarrow x_1\rightarrow\cdots\rightarrow x_{|y|_1}=x+y.
		\end{equation*}
		Using Cauchy-Schwarz inequality and the symmetry of measure $\P_\rho$, we can obtain the following moving-particle inequality:
		\begin{equation}\label{eq.Movingparticle}
			\expec{(\pi_{x,x+y}F)^2}_\rho\leq |y|_1^2\sum_{i=1}^{|y|_1}\expec{(\pi_{x_{i-1},x_i}F)^2}_\rho
		\end{equation}
		Combining \eqref{eq.Movingparticle}, the property $(2)$ in Lemma~\ref{lem.CovRW}, and the uniform ellipticity of $c_b$ we obtain \eqref{eq.EnergyLbarUpper}:
		\begin{equation*}
			\expec{F(\bar{\L})F}_\rho\leq \frac{1}{2}C_{\ref{cor.CovSEP}}\sum_{b\in(\Zd)^*}\expec{(\pi_b F)^2}_\rho\leq \frac{1}{2}C_{\ref{cor.CovSEP}}\sum_{b\in(\Zd)^*}\expec{c_b(\eta)(\pi_b F)^2}_\rho=C_{\ref{cor.CovSEP}}\expec{F(-\L)F}_\rho.
		\end{equation*}
	\end{proof}
	\begin{remark}
		The identity $\frac{1}{2} \cm = \DD$ follows the convention in the literature. Take the isotropic Brownian motion of covariance matrix $\cm =\sigma^2 \id$ as an example. Its generator is $\frac{\sigma^2}{2} \Delta$, which corresponds to the diffusion matrix $\DD = \frac{\sigma^2 \id}{2}$. 
	\end{remark}

	\section{Convergence to equilibrium of SEP}\label{sec.SEP}
	In the following sections, we keep the conventions that $Q$ is a symmetric transition matrix defining SEP with the diffusion matrix $\DD(\rho)$ as Corollary~\ref{cor.CovSEP}. We also let $\Lb, \Pb_t, \cm(\rho)$ respectively stand for its generator, semigroup, and covariance matrix. One should keep in mind the relation $\cm(\rho) = 2 \DD(\rho)$.

	We are interested in the long-time behavior of this SEP. The main result in this section is the following statement.
	\begin{proposition}\label{prop.AvePbarDecay}
		For every local function $u$, we have that
		\begin{align}\label{eq.AvePbarDecay}
			\var_{\rho}[\Pb_t u] = \frac{\tilde u' (\rho)^2 \chi(\rho)}{\sqrt{(8\pi t)^{d} \det [\DD(\rho)]}} + O(t^{-\frac{d}{2}-\frac{1}{2}}).
		\end{align}
		Here  $\tilde u' $ follows its definition in Theorem~\ref{thm.main}.
	\end{proposition}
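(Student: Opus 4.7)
The plan is to unfold $u$ through the chaos expansion \eqref{eq.chaos} and exploit the fact that $\Lb$ preserves each chaos subspace $\HH_n$: on a normalized monomial $\prod_{i=1}^n(\eta_{z_i}-\rho)$, every swap appearing in the sum defining $\Lb$ either leaves the product invariant or simply relocates one factor, so the degree is unchanged. This gives the commutation $\Pb_t \Pi_n u = \Pi_n \Pb_t u$ recalled in \eqref{eq.commute}, and the locality $u \in \F_0(\Lambda_{\ell_u})$ truncates the expansion at $n = (2\ell_u+1)^d$, so
\begin{equation*}
\var_\rho[\Pb_t u] \;=\; \sum_{n=1}^{(2\ell_u+1)^d} \norm{\Pb_t \Pi_n u}_{L^2}^2.
\end{equation*}
I would then show that $n=1$ supplies the leading order in \eqref{eq.AvePbarDecay}, and that each $n \geq 2$ contributes at most $O(t^{-nd/2}) \subseteq O(t^{-(d+1)/2})$.

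For $n=1$, write $\Pi_1 u = \sum_{x \in \Lambda_{\ell_u}} f(x)(\eta_x - \rho)$ with $f(x) = \chi(\rho)^{-1} \bracket{u(\eta_x - \rho)}_\rho$. I would first establish the constant identification
\begin{equation*}
\tilde u'(\rho) \;=\; \frac{d}{d\rho} \bracket{u}_\rho \;=\; \frac{1}{\chi(\rho)} \sum_{x \in \Lambda_{\ell_u}} \bracket{u(\eta_x - \rho)}_\rho \;=\; \mathbf{m}_f,
\end{equation*}
by differentiating the product Bernoulli weights $\rho^{|\xi|}(1-\rho)^{|\Lambda_{\ell_u}|-|\xi|}$ directly. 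By identity \eqref{eq.Delta_Q_Closed}, $\Pb_t \Pi_1 u = \sum_x f_t(x)(\eta_x - \rho)$ where $f_t$ solves the discrete heat equation \eqref{eq.heatZd} driven by the kernel $Q$. Squaring the local-CLT estimate of Lemma \ref{lem.NashZd} and using $\cm(\rho) = 2 \DD(\rho)$, I obtain
\begin{equation*}
\norm{\Pb_t \Pi_1 u}_{L^2}^2 \;=\; \chi(\rho) \norm{f_t}_{\ell^2(\Zd)}^2 \;=\; \frac{\tilde u'(\rho)^2 \chi(\rho)}{\sqrt{(8\pi t)^d \det[\DD(\rho)]}} + O(t^{-(d+1)/2}),
\end{equation*}
the cross term $\|f_t\|_{\ell^2} \cdot t^{-(d+2)/4} = O(t^{-(d+1)/2})$ being the dominant error contribution after squaring.

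For $n \geq 2$, I would invoke the higher-chaos estimates of \cite[Section~6, Theorem~17]{berzeg}: via the stirring (graphical) representation, $\Pi_n \Pb_t u$ is driven by a symmetric exclusion of $n$ labeled particles on $\Zd$, and its $L^2$-norm decays as $O(t^{-nd/2})$. Summing the finitely many terms with $2 \leq n \leq (2\ell_u+1)^d$ produces a remainder of size $O(t^{-d/2 - 1/2})$ (the critical case being $n=2$, $d=1$, where the exponents coincide), which closes \eqref{eq.AvePbarDecay}.

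The only delicate step is the constant identification $\mathbf{m}_f = \tilde u'(\rho)$: extracting $\tilde u'(\rho)$ from the Walsh coefficients via a direct moment computation on the product Bernoulli measure. Everything else is a clean assembly of Lemma \ref{lem.NashZd} on the linear side and of the Bertini--Zegarlinski higher-chaos bounds on the nonlinear side; no homogenization input enters at this stage, since $\Pb_t$ is the semigroup of a constant-rate SEP.
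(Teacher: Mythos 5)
Your proposal is correct and follows essentially the same three-step route as the paper: chaos expansion, faster decay of higher-order terms via the Bertini--Zegarlinski bound, and identification of the $n=1$ leading order by combining the local CLT estimate of Lemma~\ref{lem.NashZd} with the perturbation identity $\sum_x T_1 u(x) = \tilde u'(\rho)$ (your direct Bernoulli moment computation is exactly the content of Lemma~\ref{lem.PertFormu}). The error bookkeeping, including the observation that the squared cross term $O(t^{-d/4}\cdot t^{-(d+2)/4}) = O(t^{-(d+1)/2})$ dominates and that $n=2$, $d=1$ is the borderline case, matches the paper.
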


	This section is organized as follows. Section~\ref{subsec.Fock} recalls the facts of the Fock space for SEP, and presents a perturbation formula for the leading order constant. In Section~\ref{subsec.LBarEvolu}, we study the evolution of semigroup $\bar{P}_t$ on the Fock space. In Section~\ref{subsec.DecayPbarFock}, we follow \cite{berzeg} to establish the generalized Nash estimate, and then prove Proposition~\ref{prop.AvePbarDecay}.
	
	\subsection{Wiener--It\^o analysis of Bernoulli variables}\label{subsec.Fock} 
	We follow \cite{Privault2008} and recap some facts about the Fock space of Bernoulli variables. The centered variable $\bar{\eta}_x$ under $(\X, \fil, \Pr)$ are defined as
	\begin{align}\label{eq.defeta_center}
		\forall x \in \Zd, \qquad \bar{\eta}_x:=\eta_x-\rho.
	\end{align}
	For every $\Lambda \subset \Zd$, we denote by  $\K_n(\Lambda)$ and $\K(\Lambda)$ 
	\begin{align}\label{eq.defKn}
		\K_n(\Lambda) := \{Y \subset \Lambda: \vert Y\vert = n\}, \qquad 	\K(\Lambda) := \bigcup_{n=0}^\infty \K_n(\Lambda).
	\end{align}
	We keep $\K_n, \K$ respectively the shorthand notations for $\K_n(\Zd)$ and $\K(\Zd)$.
	
	
	Let $\{e_x\}_{x \in \Zd}$ be the canonical basis of $\X = \{0,1\}^{\Zd}$, where $e_x$ means only the site $x$ is occupied. Then we define $\eta^{x}_{+}, \eta^{x}_{-}$ as
	\begin{align}\label{eq.eta_plus_minus}
		\eta^{x}_{+}:=\eta+(1-\eta_x)e_x, \qquad \eta^{x}_{-}:=\eta-\eta_x e_x.
	\end{align}
	That is to say, $\eta^{x}_{+}$ (\emph{resp.} $\eta^{x}_{-}$) assigns the value $1$ (\emph{resp.} $0$) at $x$. We define a Glauber-type derivative for $F:\X\rightarrow \R$
	\begin{equation}\label{eq.Glauber} 
		D_x F(\eta):=F(\eta^{x}_{+})-F(\eta^{x}_{-}).
	\end{equation}
	For $Y \subset \Zd$, we define the higher-order derivative as
	\begin{equation*}
		D_Y:=\prod_{x\in Y}D_x,
	\end{equation*}
	and we define
	\begin{equation*}
		\bar{\eta}_Y:=\prod_{x\in Y}\bar{\eta}_x.
	\end{equation*}
	We also have the integration by parts formula on Bernoulli variables.
	\begin{lemma}
		The following identity holds for all $Y \in \K$
		\begin{align}\label{eq.IPP_Ber}
			\Er\Ll[F(\eta)\bar{\eta}_{Y}\Rr] = \chi(\rho)^{|Y|}\Er[D_{Y}F(\eta)].
		\end{align}
	\end{lemma}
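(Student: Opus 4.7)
The plan is to prove the identity by induction on $|Y|$, after first establishing the single-site version and verifying that the higher-order operators $D_Y$ are well-defined.

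First I would handle the base case $|Y|=1$, i.e.\ $\Er[F(\eta)\bar\eta_x] = \chi(\rho)\Er[D_xF(\eta)]$. The idea is to condition on $(\eta_z)_{z\neq x}$ and use that under the product Bernoulli measure $\eta_x$ is independent of this $\sigma$-algebra. Conditionally, $\eta_x = 1$ with probability $\rho$ (in which case $F(\eta) = F(\eta_+^x)$ and $\bar\eta_x = 1-\rho$) and $\eta_x=0$ with probability $1-\rho$ (giving $F(\eta)=F(\eta_-^x)$ and $\bar\eta_x=-\rho$), so a direct computation produces
\begin{equation*}
	\Er[F(\eta)\bar\eta_x \mid (\eta_z)_{z\neq x}] = \rho(1-\rho)\bigl(F(\eta_+^x)-F(\eta_-^x)\bigr) = \chi(\rho)\, D_xF(\eta),
\end{equation*}
and the tower property finishes the base case.

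Next I would record the key commutation/locality property: for $x\neq y$ the maps $\eta\mapsto\eta^x_\pm$ and $\eta\mapsto\eta^y_\pm$ commute, so $D_xD_y = D_yD_x$ and $D_Y := \prod_{x\in Y}D_x$ is unambiguous for any finite $Y$. Moreover, if $x\notin Y'$, then $\bar\eta_{Y'}$ is $\fil_{Y'}$-measurable hence independent of $\eta_x$, which gives the Leibniz-type identity $D_x(F\bar\eta_{Y'}) = \bar\eta_{Y'}\,D_xF$ (the factor $\bar\eta_{Y'}$ passes through $D_x$ because its value is unchanged when we reset $\eta_x$).

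For the inductive step, assume the identity holds for all subsets of size $n$, and take $Y\in\K_{n+1}$. Pick any $x\in Y$ and write $Y = Y'\cup\{x\}$ with $|Y'|=n$. Applying the base case to the function $F\bar\eta_{Y'}$ and using the Leibniz identity above,
\begin{equation*}
	\Er[F\bar\eta_Y] = \Er[(F\bar\eta_{Y'})\bar\eta_x] = \chi(\rho)\Er[D_x(F\bar\eta_{Y'})] = \chi(\rho)\Er[\bar\eta_{Y'}\,D_xF].
\end{equation*}
The induction hypothesis applied to $D_xF$ (with subset $Y'$) then yields $\Er[\bar\eta_{Y'}\,D_xF] = \chi(\rho)^{n}\Er[D_{Y'}D_xF] = \chi(\rho)^n\Er[D_YF]$, giving the claim. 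No step here is delicate: the only point that requires care is the Leibniz identity, which relies crucially on $x\notin Y'$, so that $\bar\eta_{Y'}$ is unaffected by the resetting of $\eta_x$. For non-local $F$ one should first verify the identity for cylinder functions (where everything is a finite sum) and then pass to $L^1$ by density, but since the statement is framed for generic measurable $F$ with both sides finite, this extension is routine.
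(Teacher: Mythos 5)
Your proof is correct and follows essentially the same route as the paper: a direct computation of the single-site case $\Er[F\bar\eta_x]=\chi(\rho)\Er[D_xF]$ by splitting over the value of $\eta_x$, followed by induction on $|Y|$ (the paper's ``recurrence''). Your explicit Leibniz-type identity $D_x(F\bar\eta_{Y'})=\bar\eta_{Y'}D_xF$ for $x\notin Y'$ just spells out the inductive step the paper leaves implicit, and it is valid.
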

	\begin{proof}
		Using the definition \eqref{eq.eta_plus_minus}, we have
		\begin{align*}
			\Er[F(\eta)\bar{\eta}_{x}]
			&=\Pr(\eta_x=1)\Er[F(\eta_{+}^{x})(1-\rho)]+\Pr(\eta_x=0)\Er[F(\eta_{-}^{x})(-\rho)]\\
			&=\chi(\rho)\Er[F(\eta_{+}^{x})-F(\eta_{-}^{x})]\\
			&=\chi(\rho)\Er[D_x F(\eta)].
		\end{align*}
		Then we can prove \eqref{eq.IPP_Ber} by recurrence.
	\end{proof}
	For convenience, we define a shorthand notation for the {\rhs} of \eqref{eq.IPP_Ber}
	\begin{equation}\label{eq.DefTn}
		\forall n \in \N_+,  Y\in\K_n, \qquad	T_n F(Y):=\Er[D_Y F],
	\end{equation}
	and we set $T_0 F:=\Er[F]$ by convention.

	The {\lhs} of \eqref{eq.IPP_Ber} is just the inner product between the cylinder functions, so \eqref{eq.IPP_Ber} allows us to describe better the space $L^2(\X,\fil,P_\rho)$. We define at first the discrete multiple stochastic integral $I_n$ as
	\begin{equation}\label{eq.DefIn}
		\forall n \geq 1, \qquad I_n(f_n):=\sum_{Y\in\K_n}f_n(Y)\bar{\eta}_Y.
	\end{equation}
	It is well-defined for $f_n \in \ell^1(K_n)$, but then can be extended to $f_n \in \ell^2(K_n)$ viewing the following orthogonal property
	\begin{equation}\label{eq.Ortho}
		\forall Y_1,Y_2\in\K, \qquad \Er[\bar{\eta}_{Y_1}\bar{\eta}_{Y_2}]= \chi(\rho)^{|Y_1|}\1_{\{Y_1=Y_2\}},
	\end{equation}
	and the isometric property
	\begin{equation}\label{eq.ItoIso}
		\Er[I_n(f_n)^2]= \chi(\rho)^n \norm{f_n}^2_{\ell^2(\K_n)}.
	\end{equation}
	Afterwards, we can define the Fock space of order $n$ 
	\begin{equation*}
		\forall n \in \N_+, \qquad \HH_n:=\{I_n(f_n):f_n\in\ell^2(\K_n)\},
	\end{equation*}
	and we keep the convention $\HH_0:=\R$.
	
	
		

	Let $\mcl{S}$ denote the linear space spanned by the multiple stochastic integrals:
	\begin{equation*}
		\mcl{S}:=\text{vect}\Ll\{\bigcup_{n=0}^{\infty}\mcl{H}_n\Rr\}=\left\{\sum_{k=0}^{n}I_k(f_k):f_k\in\ell^2(\K_k),n\in\N\right\},
	\end{equation*}
	and we denote by $\bigoplus_{n=0}^{\infty}\HH_n$ the completion of $\mcl{S}$ in $L^2(\X,\fil,P_\rho)$.


	\smallskip

	One important property is the chaos expansion.

	\begin{lemma}[Chaos expansion]\label{lem.Chaos}
		The following identity holds
		\begin{align}\label{eq.Fock2}
			L^2(\X, \fil, \Pr) = \bigoplus_{n=0}^{\infty}\HH_n.
		\end{align}
		More precisely, for every $F\in L^{2}(\X,\fil,\P_\rho)$, it can be decomposed as
		\begin{equation}\label{eq.chaos2} 
			F(\eta)=\sum_{n=0}^{\infty}I_n(T_n F),
		\end{equation}
		where the equality makes sense in $L^{2}(\X,\fil,\P_{\rho})$.
	\end{lemma}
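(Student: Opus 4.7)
The plan is first to establish that the orthogonal family $\{\bar\eta_Y : Y \in \K\}$ is \emph{total} in $L^2(\X,\fil,\Pr)$, and then to use the orthogonality relation \eqref{eq.Ortho} together with the integration by parts formula \eqref{eq.IPP_Ber} to identify the coefficients in the expansion as $T_n F(Y) = \Er[D_Y F]$. Once totality and the coefficient formula are both in hand, the direct-sum decomposition \eqref{eq.Fock2} follows from \eqref{eq.ItoIso} (which says that $\HH_n$ is a closed subspace isometric to $\chi(\rho)^{n/2}\ell^2(\K_n)$), and \eqref{eq.chaos2} then follows by standard Hilbert space arguments.

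I would start with the finite-volume case. For every finite $\Lambda \subset \Zd$ the space $L^2(\X,\fil_\Lambda,\Pr)$ has dimension $2^{|\Lambda|}$, and by \eqref{eq.Ortho} the $2^{|\Lambda|}$ vectors $\{\bar\eta_Y : Y \subset \Lambda\}$ are pairwise orthogonal and nonzero, so they form an orthogonal basis. Any $F \in L^2(\X,\fil_\Lambda,\Pr)$ thus admits a finite expansion $F = \sum_{Y \subset \Lambda} c_Y \bar\eta_Y$, and taking the inner product with $\bar\eta_Y$ and invoking \eqref{eq.IPP_Ber} identifies $c_Y = \chi(\rho)^{-|Y|}\Er[F \bar\eta_Y] = \Er[D_Y F] = T_{|Y|} F(Y)$. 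This establishes \eqref{eq.chaos2} for cylinder functions, and as a by-product gives the finite-volume Parseval identity $\sum_{Y \subset \Lambda}\chi(\rho)^{|Y|}(T_{|Y|}F(Y))^2 = \norm{F}_{L^2}^2$.

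To pass to general $F \in L^2(\X,\fil,\Pr)$, I would approximate by the conditional expectations $F_\Lambda := \Er[F \mid \fil_\Lambda]$. Since $\fil$ is generated by $\bigcup_{\Lambda} \fil_\Lambda$ over all finite $\Lambda \subset \Zd$, the $L^2$-martingale convergence theorem gives $F_\Lambda \to F$ in $L^2$. A direct computation from \eqref{eq.Glauber} and the product structure of $\Pr$ shows that $D_Y \Er[F \mid \fil_\Lambda] = \Er[D_Y F \mid \fil_\Lambda]$ whenever $Y \subset \Lambda$, and hence $T_n F_\Lambda(Y) = T_n F(Y)$ for $Y \in \K_n(\Lambda)$. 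Applying the finite-volume Parseval identity to $F_\Lambda$ and letting $\Lambda \uparrow \Zd$ then yields Bessel's inequality
\begin{equation*}
\sum_{n=0}^\infty \chi(\rho)^n \norm{T_n F}_{\ell^2(\K_n)}^2 \leq \norm{F}_{L^2}^2.
\end{equation*}
In particular $T_n F \in \ell^2(\K_n)$, so each $I_n(T_n F)$ is a well-defined element of $\HH_n$ via \eqref{eq.ItoIso}.

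It remains to upgrade Bessel to Parseval so that $S_N := \sum_{n=0}^N I_n(T_n F) \to F$ in $L^2$. Orthogonality of the $\HH_n$ (a consequence of \eqref{eq.Ortho}) together with a direct computation of $\Er[F \cdot I_n(T_n F)]$ via \eqref{eq.IPP_Ber} shows that $S_N$ is the orthogonal projection of $F$ onto $\bigoplus_{n=0}^N \HH_n$. Given $\epsilon > 0$, pick a finite $\Lambda \subset \Zd$ with $\norm{F - F_\Lambda}_{L^2} < \epsilon$; since $F_\Lambda$ admits a finite exact expansion of length at most $|\Lambda|$ in the $\bar\eta_Y$'s whose coefficients match those of $F$ on $\K(\Lambda)$, the projection of $F$ onto $\bigoplus_{n=0}^{|\Lambda|}\HH_n$ differs from $F$ by at most $\epsilon$ in $L^2$, forcing $\norm{F - S_N}_{L^2} \to 0$. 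The only subtle point is the commutation identity $D_Y \Er[F \mid \fil_\Lambda] = \Er[D_Y F \mid \fil_\Lambda]$ for $Y \subset \Lambda$, which I expect to be the main technical step; everything else reduces to elementary Hilbert space theory.
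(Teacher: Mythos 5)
Your proposal is correct and follows essentially the same route as the paper: reduce to finite-volume cylinder functions via conditional expectations $\Er[F\mid\fil_\Lambda]$ and martingale convergence, use the finite-dimensionality of $L^2(\X,\fil_\Lambda,\Pr)$ together with the orthogonality of $\{\bar\eta_Y\}$ to span it, and identify coefficients through the integration by parts formula \eqref{eq.IPP_Ber}. You spell out a few steps (the commutation $D_Y\Er[F\mid\fil_\Lambda]=\Er[D_YF\mid\fil_\Lambda]$ and the Bessel-to-Parseval upgrade) that the paper leaves implicit, but these are exactly the details needed to make the paper's terse argument rigorous, so the substance is the same.
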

	
	\begin{proof}
		The proof can be divided into 2 steps and can be found in \cite[Proposition~6.7]{Privault2008}.
		
		\textit{Step~1: orthogonal projection.} It is clear that the {\rhs} of \eqref{eq.Fock2} is a sum of orthogonal subspaces thanks to \eqref{eq.Ortho}. The inclusion
		\begin{align*}
			L^2(\X, \fil, \Pr) \supset \bigoplus_{n=0}^{\infty}\HH_n,
		\end{align*} 
		is obvious, and it suffices to verify the inclusion of another direction. Recall that $\Lambda_n$ is of side length $(2n+1)$, then the projection
		\begin{align*}
			F_n := \Er[F \vert \fil_{\Lambda_n}],
		\end{align*}
		forms a closed martingale and we have $\lim_{n \to \infty} F_n = F$ in $L^2$. Meanwhile, it is clear that 
		\begin{align*}
			L^2(\X, \fil_{\Lambda_n}, \Pr) \subset \bigoplus_{k=0}^{(2n+1)^d}\HH_k,
		\end{align*}
		because the {\lhs} is a finite dimensional linear space. This concludes \eqref{eq.Fock2}.

		\textit{Step~2: identification of coefficients.} For every $Y\in\K$, the projection on $\bar{\eta}_{Y}$ is
		\begin{equation*}
			\frac{\Er[F(\eta)\bar{\eta}_{Y}]}{\Er[(\bar{\eta}_{Y})^2]}=\frac{\Er[F(\eta)\bar{\eta}_{Y}]}{\chi(\rho)^{|Y|}} = \frac{\chi(\rho)^{|Y|}\Er[D_{Y}F(\eta)]}{\chi(\rho)^{|Y|}} = T_{|Y|} F(Y).
		\end{equation*}
		The first equality comes from the product Bernoulli measure, and the second equality comes from \eqref{eq.IPP_Ber}. Then we use the definition \eqref{eq.DefTn} for the third equality and conclude \eqref{eq.chaos2}.	
	\end{proof}
	
	Now we present a perturbation formula.
	\begin{lemma}[Perturbation formula]\label{lem.PertFormu}
		Given a local function $F$, we have the identity
		\begin{equation*}
			\frac{\d^n}{\d\rho^n}\Er[F]=n!\sum_{Y\in\K_n}T_n F(Y).\label{eq.PertFormu}
		\end{equation*}
	\end{lemma}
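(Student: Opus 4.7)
The plan is to prove the formula by induction on $n$, reducing everything to the base case $n=1$ which follows from a direct differentiation of the Bernoulli density combined with the integration by parts identity \eqref{eq.IPP_Ber}.

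For the base case, I would note that since $F$ is a local function, say $F\in\F_0(\Lambda)$, we may write $\Er[F]=\sum_{\eta\in\{0,1\}^\Lambda}F(\eta)\prod_{x\in\Lambda}\rho^{\eta_x}(1-\rho)^{1-\eta_x}$ as a polynomial in $\rho$. A direct computation yields the identity
\begin{equation*}
\frac{\partial}{\partial\rho}\left[\rho^{\eta_x}(1-\rho)^{1-\eta_x}\right]=\frac{\bar\eta_x}{\chi(\rho)}\,\rho^{\eta_x}(1-\rho)^{1-\eta_x},
\end{equation*}
so differentiating under the sum gives $\frac{\d}{\d\rho}\Er[F]=\chi(\rho)^{-1}\sum_{x\in\Lambda}\Er[F\bar\eta_x]$. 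Applying \eqref{eq.IPP_Ber} with $Y=\{x\}$ converts this into $\sum_{x\in\Lambda}T_1F(\{x\})$. Since $D_xF=0$ whenever $x\notin\Lambda$, this equals $\sum_{Y\in\K_1}T_1F(Y)$, which is the case $n=1$.

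For the induction step, I would use the key combinatorial property $D_x^2=0$ (because $D_xF(\eta)=F(\eta^x_+)-F(\eta^x_-)$ is independent of $\eta_x$, so applying $D_x$ a second time returns zero). Assuming the formula for $n$, I write
\begin{equation*}
\frac{\d^{n+1}}{\d\rho^{n+1}}\Er[F]=n!\sum_{Y\in\K_n}\frac{\d}{\d\rho}\Er[D_YF].
\end{equation*}
The sum is finite since $D_YF\equiv 0$ unless $Y\subset\Lambda$. Applying the $n=1$ formula to the local function $D_YF$ (which does not depend on $\rho$) gives $\frac{\d}{\d\rho}\Er[D_YF]=\sum_{x\in\Zd}\Er[D_xD_YF]$, and $D_xD_YF$ vanishes when $x\in Y$ by the observation above. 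Hence only terms with $x\notin Y$ survive, producing
\begin{equation*}
\frac{\d^{n+1}}{\d\rho^{n+1}}\Er[F]=n!\sum_{Y\in\K_n}\sum_{x\notin Y}T_{n+1}F(Y\cup\{x\}).
\end{equation*}
Each $Z\in\K_{n+1}$ arises exactly $n+1$ times in this double sum (once for each choice of the distinguished element $x\in Z$), so the expression collapses to $(n+1)!\sum_{Z\in\K_{n+1}}T_{n+1}F(Z)$, completing the induction.

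The calculation is essentially routine; the only subtle point, and the conceptual heart of the argument, is the combinatorial matching between the ordered iterated derivative $\sum_{x_1,\ldots,x_n}D_{x_n}\cdots D_{x_1}F$ produced by the chain of differentiations and the unordered sum $\sum_{Y\in\K_n}D_YF$ appearing in the statement, reconciled by the factor $n!$ together with the nilpotency $D_x^2=0$ which forces all indices to be distinct. No other delicate estimate is required, so I do not expect any genuine obstacle.
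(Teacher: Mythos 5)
Your proof is correct, and it runs on essentially the same engine as the paper's: the first-order identity $\frac{\d}{\d\rho}\Er[F]=\sum_{x}\Er[D_xF]$, iterated, together with the nilpotency $D_x^2=0$ and the $n!$ count matching ordered tuples of distinct sites with unordered sets $Y\in\K_n$. The only (cosmetic) differences are that you obtain the first-order identity by differentiating the Bernoulli weight directly and invoking \eqref{eq.IPP_Ber}, whereas the paper introduces a site-dependent density $\boldsymbol{\rho}$ and uses the chain rule, and that you perform the combinatorial collapse at each induction step rather than once at the end; both routes are equally valid.
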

	
	\begin{proof}
		Let $\boldsymbol{\rho}\in(0,1)^{\otimes\Zd}$ be a generalized density on every site, and we define a measure $\P_{\boldsymbol{\rho}}:=\otimes_{x\in\Zd} \operatorname{Bernoulli}(\boldsymbol{\rho}_x)$ as the product Bernoulli measure. For every function $F:\X\rightarrow\R$, we can define its expectation function $	E(F;\boldsymbol{\rho})$ by
		\begin{equation*}
			E(F;\boldsymbol{\rho})=\int_{\X}F(\eta)\d\P_{\boldsymbol{\rho}}.
		\end{equation*}
		Recall that $\{e_x\}_{x \in \Zd}$ is the canonical basis of $\X = \{0,1\}^{\Zd}$. Because $\boldsymbol{\rho}+te_x$ only modifies the density at $x$, for $t$ small enough, we still have $\boldsymbol{\rho}+te_x\in(0,1)^{\otimes\Zd}$. We can consider the derivative of $E(F;\boldsymbol{\rho})$ along the direction $e_x$ with $x\in\Zd$.
		\begin{align*}
			E(F;\boldsymbol{\rho}+te_x)
			&=\P_{\boldsymbol{\rho}+te_x}(\eta_x=1)\E_{\P_{\boldsymbol{\rho}}}[F(\eta_{+}^{x})]+\P_{\boldsymbol{\rho}+te_x}(\eta_x=0)\E_{\P_{\boldsymbol{\rho}}}[F(\eta_{-}^{x})]\\
			&=(\boldsymbol{\rho}_x+t)\E_{\P_{\boldsymbol{\rho}}}[F(\eta_{+}^{x})]+(1-\boldsymbol{\rho}_x-t)\E_{\P_{\boldsymbol{\rho}}}[F(\eta_{-}^{x})]\\
			&=E(F;\boldsymbol{\rho})+t(\E_{\P_{\boldsymbol{\rho}}}[F(\eta_{+}^{x})]-\E_{\P_{\boldsymbol{\rho}}}[F(\eta_{-}^{x})])\\
			&=E(F;\boldsymbol{\rho})+t\E_{\P_{\boldsymbol{\rho}}}[D_x F].
		\end{align*}
		Sending $t\rightarrow0$, we get
		\begin{equation*}
			\frac{\partial}{\partial \boldsymbol{\rho}_x} E(F;\boldsymbol{\rho})=\E_{\P_{\boldsymbol{\rho}}}[D_x F].
		\end{equation*}
		
		Given a local function $F$ which is $\fil_{\Lambda_L}$-measurable, then we can study $\frac{\d}{\d\rho}\Er[F]$ using the chain rule with the reference measure $\boldsymbol{\rho}_x \equiv \rho$ for all $x \in \Zd$
		\begin{equation*}
			\frac{\d}{\d\rho}\Er[F]=\sum_{x\in\La_L}\frac{\partial}{\partial \boldsymbol{\rho}_x} E(F;\boldsymbol{\rho})=\sum_{x\in\La_L}\Er[D_x F]=\sum_{x\in\Zd}\Er[D_x F],
		\end{equation*}
		since $D_x F = 0$ for all $x\notin\La_L$.

		Iterating this process, we derive the following equality by induction:
		\begin{equation*}
			\frac{\d^n}{\d\rho^n}\Er[F]=\sum_{(x_1,\cdots,x_n)\in(\Zd)^{\otimes n}}\Er[D_{x_1}\cdots D_{x_n}F].
		\end{equation*}
		We can always exchange derivative and summation because there are at most $|\La_L|^n$ nonzero terms in the $n$-th order derivative. 
		
		Notice the fact $D_{x}D_{x} F\equiv0$ for all $x\in\Zd$, the expression above can be simplified as
		\begin{equation*}
			\frac{\d^n}{\d\rho^n}\Er[F]=\sum_{(x_1,\cdots,x_n)\in(\Zd)^{\otimes n}}\Er[D_{x_1}\cdots D_{x_n}F]=n!\sum_{Y\in\K_n}T_n F(Y).
		\end{equation*}
		This concludes the desired result.
	\end{proof}

	\subsection{Kolmogorov equation in Fock space}\label{subsec.LBarEvolu}
	In this subsection, we study the evolution of semigroup on the Fock space. 
	\begin{lemma}\label{lem.eqKolmogrov}
		The following equation holds for all $f_n\in\ell^2(\K_n)$
		\begin{align*}
			\Lb(I_n(f_n)) = I_n\left(\frac{1}{2}\Delta^{(n)}_Qf_n\right),
		\end{align*}
		where the discrete Laplace operator is defined as 
		\begin{equation}\label{eq.defDelta_n}
			\forall Y\in\K_n, \qquad \Ll[\frac{1}{2}\Delta^{(n)}_{Q}f_n\Rr](Y):=\sum_{x\in Y,y\in \Zd \backslash Y}Q_{y-x}(f_n(Y\cup\{y\}\backslash\{x\})-f_n(Y)).
		\end{equation}
		Moreover, we have $\frac{1}{2}\Delta^{(n)}_Q f_n\in\ell^2(\K_n)$ and $\Lb(I_n(f_n))\in\HH_n$.
	\end{lemma}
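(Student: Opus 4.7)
The plan is to match coefficients in the orthogonal basis $\{\bar\eta_Y\}_{Y \in \K_n}$ of $\HH_n$ provided by \eqref{eq.Ortho}--\eqref{eq.ItoIso}. The identity $\Lb(I_n(f_n)) = I_n(\tfrac{1}{2}\Delta_Q^{(n)} f_n)$ will therefore follow once I compute $\Lb \bar\eta_Y$ on each basis element, multiply by $f_n(Y)$, and reindex via the symmetry of $Q$; the $\ell^2$ bound will then come from the finite range of $Q$ granted by Lemma~\ref{lem.CovRW}.

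First I would unpack $\pi_{x,y}\bar\eta_Y$ by cases. By \eqref{eq.exchange}, if $\{x,y\} \subset Y$ or $\{x,y\} \cap Y = \emptyset$ then the swap preserves every factor of $\bar\eta_Y$, so $\pi_{x,y}\bar\eta_Y = 0$. Otherwise, assuming $x \in Y$ and $y \in Y^c$, replacing $\bar\eta_x$ by $\bar\eta_y$ in the product gives
\begin{equation*}
\pi_{x,y}\bar\eta_Y \;=\; \bar\eta_{(Y \cup \{y\}) \setminus \{x\}} - \bar\eta_Y.
\end{equation*}
Inserting this into $\Lb = \tfrac{1}{2}\sum_{x,y} Q_{y-x}\pi_{x,y}$ and folding the two asymmetric regimes ($x \in Y, y \notin Y$ and $y \in Y, x \notin Y$) into one via the symmetry $Q_{y-x} = Q_{x-y}$ yields
\begin{equation*}
\Lb\bar\eta_Y \;=\; \sum_{x \in Y,\, y \in Y^c} Q_{y-x}\bigl(\bar\eta_{(Y \cup \{y\}) \setminus \{x\}} - \bar\eta_Y\bigr).
\end{equation*}

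Multiplying by $f_n(Y)$ and summing over $Y \in \K_n$, the diagonal part already has the form $-\sum_Y \bar\eta_Y \bigl(\sum_{x \in Y,\, y \in Y^c} Q_{y-x}\bigr) f_n(Y)$. For the transport part I would apply the involution $(Y,x,y) \mapsto ((Y \cup \{y\}) \setminus \{x\}, y, x)$ on the set of admissible triples, absorbing the swap of $x$ and $y$ by $Q_{y-x} = Q_{x-y}$. After this reindexing $\bar\eta_{Y'}$ is the common factor and its coefficient is exactly $[\tfrac{1}{2}\Delta_Q^{(n)}f_n](Y')$ of \eqref{eq.defDelta_n}, proving the identity for compactly supported $f_n$. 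For the remaining assertions I would use that $\supp(Q)$ is finite by Lemma~\ref{lem.CovRW} to write $\tfrac{1}{2}\Delta_Q^{(n)}$ as a finite sum, indexed by the shifts $z \in \supp(Q) \setminus \{0\}$, of rearrangement-type operators on $\ell^2(\K_n)$ plus a bounded counting operator; the triangle inequality (or Schur's test on the resulting finite-degree adjacency structure on $\K_n$) then yields $\|\tfrac{1}{2}\Delta_Q^{(n)} f_n\|_{\ell^2(\K_n)} \leq C(d,\lambda) \|f_n\|_{\ell^2(\K_n)}$, which both extends the identity to all $f_n \in \ell^2(\K_n)$ by density and, via the isometry \eqref{eq.ItoIso}, shows $\Lb(I_n(f_n)) \in \HH_n$. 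The only delicate point is the bookkeeping in the change of variables of the transport term, but this reduces to the elementary check that the prescribed map is an involution on its natural domain.
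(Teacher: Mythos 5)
Your proposal is correct and follows essentially the same route as the paper's proof: a coefficient-matching computation in the basis $\{\bar\eta_Y\}$, using $\pi_{x,y}\bar\eta_Y=0$ unless exactly one of $x,y$ lies in $Y$, the symmetry $Q_{y-x}=Q_{x-y}$ to merge the two asymmetric regimes, and the finite support of $Q$ for the $\ell^2(\K_n)$ bound. The only cosmetic difference is that you act with $\Lb$ on each $\bar\eta_Y$ and reindex by an involution, while the paper expands $\pi_b I_n(f_n)$ and reads off the coefficient of $\bar\eta_Y$; your extra care with the compactly supported case and density is a minor refinement of the same argument.
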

	\begin{proof}
		We fix $Y\in\K_n$ and consider one term $\pi_{x,y}f_n(Y)\bar{\eta}_{Y}$. This term will be canceled if $x,y\in Y$ or $x,y \notin Y$, so we only need to consider the case $\{x\in Y,y\notin Y\}$ and $\{x\notin Y,y\in Y\}$. We take the former as an example, then $Y$ can be written as $Y=Y_{n-1} \cup \{x\}$ with $Y_{n-1} \in \K_{n-1}(\Zd \setminus \{x,y\})$, and we obtain 
		\begin{multline*}
			\pi_{x,y}I_n(f_n)=\\
			\sum_{Y_{n-1}\in\K_{n-1}(\Zd\backslash\{x,y\})}\Ll(f_n(Y_{n-1}\cup\{y\}-f_n(Y_{n-1}\cup\{x\}))(\bar{\eta}_{Y_{n-1}\cup\{x\}}-\bar{\eta}_{Y_{n-1}\cup\{y\}}\Rr).
		\end{multline*}
		Since $\Lb=\frac{1}{2}\sum_{x\in\Zd}\sum_{y\in\Zd}Q_{y-x}\pi_{x,y}$, we can calculate $\Lb(I_n(f_n))$ by linearity, with the coefficient of the term $\bar{\eta}_{Y}$:
		\begin{multline*}
			\frac{1}{2}\sum_{x\in Y,y\in\Zd\backslash Y}Q_{y-x}(f_n(Y\cup\{y\}\backslash\{x\})-f_n(Y))\\
			-\frac{1}{2}\sum_{y\in Y,x\in\Zd\backslash Y}Q_{y-x}(f_n(Y)-f_n(Y\cup\{x\}\backslash\{y\})).
		\end{multline*}
		The two parts in the above expression are the same because of the symmetry  $Q_{y-x}=Q_{x-y}$. Therefore, we have
		\begin{equation*}
			\Lb(I_n(f_n))=\sum_{Y\in\K_n}\sum_{x\in Y,y\in \Zd \backslash Y}Q_{y-x}(f_n(Y\cup\{y\}\backslash\{x\})-f_n(Y))\bar{\eta}_Y=I_n\Ll(\frac{1}{2}\Delta^{(n)}_Q f_n\Rr).
		\end{equation*}
		This yields \eqref{eq.defDelta_n}. Since the support of $Q$ is a finite set, we can show 
		\begin{equation*}
			\norm{\frac{1}{2}\Delta^{(n)}_{Q}f_n}_{\ell^2(\K_n)}\leq C ||f_n||_{\ell^2(\K_n)},
		\end{equation*}
		where $C$ is a constant depending on $Q$. Using the estimate above, we have $\frac{1}{2}\Delta^{(n)}_Q f_n\in\ell^2(\K_n)$ and $\Lb(I_n(f_n))\in\HH_n$.
	\end{proof}

	\subsection{Faster heat kernel decay}\label{subsec.DecayPbarFock}
	In this subsection, we recall the generalized Nash estimate  and  the variance decay in \cite{berzeg}.
	
	For $n\in\N$, we consider the following seminorm for $F\in L^2(\X,\fil,\P_{\rho})$:
	\begin{equation}\label{eq.defTripleNorm}
		|||F|||_n:=\sum_{Y\in\K_n}||D_Y F||_{L^\infty}.
	\end{equation}
	One can identify the seminorm $|||\cdot|||_n$ as $\mathbf{V}_{\infty,1}^n$ in \cite[(5.2)]{berzeg}. 	For an element $I_n(f_n)$ in $\HH_n$, its seminorm $|||I_n(f_n)|||_n$ can be computed as:
	\begin{equation*}
		|||I_n(f_n)|||_n=\sum_{Y\in\K_n}|f_n(Y)|=\norm{f_n}_{\ell^1(\K_n)}\geq \norm{f_n}_{\ell^2(\K_n)}.
	\end{equation*}
	We denote $\tilde{\HH}_n$ the collection of all elements in $\HH_n$ with finite extended norm $|||\cdot|||_n$:
	\begin{equation}\label{eq.DeftildeHn}
		\tilde{\HH}_n:=\{I_n(f_n): \norm{f_n}_{\ell^1(\K_n)}<\infty\}\subset\HH_n.
	\end{equation}

	One major achievement in \cite{berzeg} is the following generalized Nash inequality. We reformulate its proof in Section~\ref{sec.Nash} for convenience. In the statement, the Dirichlet form associated to SEP is given by
	\begin{equation*}
		\bar{\sE}_{\rho}(F):= \bracket{F(-\Lb F)}_{\rho}. 
	\end{equation*}
	
	\begin{proposition}[Generalized Nash inequality]\label{prop.Nash}
		For every $F\in\tilde{\HH}_n$, the following estimate holds
		\begin{equation*}
			\bracket{F^2}_{\rho}\leq C\chi(\rho)^{n(1-\alpha_n)}\bar{\sE}_\rho(F)^{\alpha_n}|||F|||^{2(1-\alpha_n)}_n,
		\end{equation*}
		where $\alpha_n :=\frac{nd}{2+nd}$ and the constant $C$ only depends on $n$ and $d$.
	\end{proposition}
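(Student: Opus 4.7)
The strategy is to reduce the assertion via chaos expansion to a classical Nash inequality for the $n$-particle exclusion random walk on $\K_n$, and then to establish the latter from an ultracontractive heat kernel bound obtained by coupling with $n$ independent random walks. Writing $F = I_n(f_n)$ with $f_n \in \ell^1(\K_n) \subset \ell^2(\K_n)$, the isometry \eqref{eq.ItoIso}, Lemma~\ref{lem.eqKolmogrov}, and the identity $|||I_n(f_n)|||_n = \|f_n\|_{\ell^1(\K_n)}$ give
\begin{equation*}
\bracket{F^2}_\rho = \chi(\rho)^n \|f_n\|_{\ell^2(\K_n)}^2, \qquad \bar{\sE}_\rho(F) = \chi(\rho)^n \bracket{f_n, -\tfrac{1}{2}\Delta_Q^{(n)} f_n}_{\ell^2(\K_n)}.
\end{equation*}
Substituting into the claimed inequality, the powers $\chi(\rho)^n$ cancel on both sides since $(1-\alpha_n)+\alpha_n = 1$, reducing everything to the purely discrete Nash inequality
\begin{equation*}
\|f_n\|_{\ell^2(\K_n)}^2 \leq C \bracket{f_n, -\tfrac{1}{2}\Delta_Q^{(n)} f_n}_{\ell^2(\K_n)}^{\alpha_n} \|f_n\|_{\ell^1(\K_n)}^{2(1-\alpha_n)}
\end{equation*}
for the symmetric Markov generator $\tfrac{1}{2}\Delta_Q^{(n)}$ on $\K_n$, with exponent $\alpha_n = nd/(nd+2)$ characteristic of effective dimension $nd$.

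By the standard Varopoulos equivalence between Nash inequalities and ultracontractivity of symmetric sub-Markovian semigroups, the above is equivalent to the heat kernel bound
\begin{equation*}
\bigl\|\bar{P}_t^{(n)}\bigr\|_{\ell^1(\K_n)\to\ell^\infty(\K_n)} \leq C_n t^{-nd/2}, \qquad t \geq 1,
\end{equation*}
where $\bar{P}_t^{(n)} := \exp\bigl(t \cdot \tfrac{1}{2}\Delta_Q^{(n)}\bigr)$ is the $n$-particle SEP semigroup on $\K_n$. To establish this I construct a Liggett-type labeled coupling between the $n$-particle exclusion dynamics and $n$ independent symmetric random walks on $\Zd$ with generator $\tfrac{1}{2}\Delta_Q$: tag the particles, let each attempt jumps at the same Poisson clock as a free walker, and suppress only those jumps whose target site is already occupied in the exclusion configuration. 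The coupling yields the transition kernel domination
\begin{equation*}
\bar{P}_t^{(n)}(Y, Y') \leq \sum_{\sigma \in S_n} \prod_{i=1}^n \bar{p}_t\bigl(x_i, y_{\sigma(i)}\bigr), \qquad Y = \{x_i\},\; Y' = \{y_j\},
\end{equation*}
with $\bar{p}_t$ the one-particle transition kernel. Invoking the elementary single-site heat kernel estimate $\bar{p}_t(x,y) \leq C t^{-d/2}$ (a consequence of the classical Nash inequality on $\Zd$, in the spirit of Lemma~\ref{lem.NashZd}) gives $\bar{P}_t^{(n)}(Y,Y') \leq n! C^n t^{-nd/2}$, whence the desired ultracontractivity.

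The main obstacle is making the labeled coupling rigorous for the general finite-range symmetric kernel $Q$, which need not be nearest-neighbor, and verifying that the induced suppression of jumps preserves the displayed domination even after successive collisions reshuffle the labels. One convenient implementation is via a graphical stirring representation: both the free and exclusion processes are driven by the same family of Poisson-marked bonds, so that the labeled tagged-particle paths of SEP can be matched, up to a permutation $\sigma \in S_n$, with the corresponding free walks sharing the same graphical driver. An alternative route avoiding coupling altogether is to prove the Nash inequality directly by comparing $\tfrac{1}{2}\Delta_Q^{(n)}$ with its free analogue on $(\Zd)^n$ through a symmetrization/extension trick, since the exclusion Dirichlet form omits jumps that would create collisions and the extension by zero onto the diagonal has to be estimated carefully.
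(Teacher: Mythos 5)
Your reduction via chaos expansion is clean and correct: the isometry \eqref{eq.ItoIso}, the identity $|||I_n(f_n)|||_n = \|f_n\|_{\ell^1(\K_n)}$, and Lemma~\ref{lem.eqKolmogrov} do indeed transfer the claimed inequality to a Nash inequality for the symmetric generator $\tfrac12\Delta^{(n)}_Q$ on $\K_n$, with the factors $\chi(\rho)^n$ cancelling as you say. This is a genuinely different strategy from the one used in the paper, which proves the inequality directly in the Fock space by a martingale decomposition over a tiling of triadic cubes (via the conditional expectations $E^k$ of \eqref{eq.defEk}), the spectral gap inequality under the canonical ensemble, and an iteration down the chaos order that lands at equations \eqref{eq.AppL^2}--\eqref{eq.AppL^1L^2} before optimizing the cube size $\ell$. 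The paper's route stays entirely within variance/entropy-type estimates and never needs a heat kernel bound for the $n$-particle SEP.

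The gap in your argument is precisely the step you flag yourself: the displayed permanent bound $\bar{P}_t^{(n)}(Y,Y')\leq\sum_{\sigma\in S_n}\prod_i \bar p_t(x_i,y_{\sigma(i)})$ is asserted, not proven, and the supporting sketches do not close it. The graphical stirring representation makes each \emph{labeled} tagged particle marginally a $Q$-walk, but those labeled trajectories are not independent (they share the Poisson bond driver), so the coupling as described does not yield the product/permanent domination, and ``matched up to a permutation with independent free walks'' is not what stirring delivers. For the Nash estimate you in fact do not need the permanent; the weaker negative-association bound $\bar p^{(n)}_t(A,B)\leq\prod_{j}\bigl(\sum_i \bar p_t(x_i,y_j)\bigr)\leq C n^n t^{-nd/2}$ suffices, but negative association for symmetric exclusion with a general finite-range symmetric $Q$ is itself a nontrivial theorem (Liggett's negative-correlation results) that must be invoked precisely, not re-derived from stirring. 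The alternative ``Dirichlet form comparison with the free process on $(\Zd)^n$ after extension by zero onto the diagonal'' also does not go through as a clean domination: the free Dirichlet form of the extended function acquires boundary contributions from jumps onto the diagonal of size comparable to $\|f_n\|_{\ell^2}^2$, so the two Dirichlet forms are not comparable on the relevant function class, and one is forced to control exactly these diagonal terms --- which is what the paper's term $(\mathbf{II})$ / estimate \eqref{eq.AppL^1} does, via the $\ell^1$ norm $|||F|||_n$, rather than by a Dirichlet-form comparison. So as written the proposal reduces the proposition to two well-chosen ingredients but establishes neither; to complete it you would need to either cite and carefully apply a negative-association or permanent-type inequality valid for general symmetric finite-range $Q$, or carry out the diagonal-boundary analysis, at which point the argument is no longer shorter than the martingale/spectral-gap route in the appendix.
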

	
	The generalized Nash inequality has some extensions; see for example \cite[Appendix~A, B]{jlqy}. However, the following property, proved in \cite[Theorem 10]{berzeg}, is specific for SEP, (see also \cite[Chapter 8]{liggett2012interacting})
	\begin{equation}\label{eq.ContraPbart}
		\forall n \in \N, \qquad |||\bar{P}_t F|||_n\leq |||F|||_n .
	\end{equation}	
	
	Combing both Proposition~\ref{prop.Nash} and \eqref{eq.ContraPbart}, we derive the faster decay result \cite[Theorem~17]{berzeg}.
	\begin{proposition}\label{prop.fasterDecay}
		For every $n\in\N_+$, there exists a constant $C$ only depending on $n,d$ and $\rho$ such that the following estimate holds for all $F\in\tilde{\HH}_n$:  
		\begin{align}\label{eq.fasterDecay}
			\bracket{(\Pb_t F)^2}_\rho \leq C t^{-\frac{nd}{2}} |||F|||^2_n.
		\end{align}
	\end{proposition}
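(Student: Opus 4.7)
The plan is to run a standard Nash-type ODE argument, using Proposition~\ref{prop.Nash} as the dissipation input and \eqref{eq.ContraPbart} to keep the triple norm bounded along the flow.

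First, I would set $u(t):=\bracket{(\Pb_t F)^2}_\rho$ for $F \in \tilde\HH_n$. By Lemma~\ref{lem.eqKolmogrov} the subspace $\HH_n$ is stable under $\Lb$, hence also under $\Pb_t$, so $\Pb_t F \in \HH_n$ for all $t \geq 0$; combined with the contraction \eqref{eq.ContraPbart}, which gives $|||\Pb_t F|||_n \leq |||F|||_n < \infty$, we actually have $\Pb_t F \in \tilde\HH_n$ throughout. Differentiating and using the self-adjointness of $\Lb$ in $L^2(\X,\fil,\P_\rho)$ yields
\begin{equation*}
u'(t) = 2\bracket{(\Pb_t F)(\Lb \Pb_t F)}_\rho = -2\,\bar{\sE}_\rho(\Pb_t F).
\end{equation*}

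Next, I would apply the generalized Nash inequality (Proposition~\ref{prop.Nash}) to $\Pb_t F$, exponent $\alpha_n=\frac{nd}{2+nd}$, and plug in $|||\Pb_t F|||_n \leq |||F|||_n$ from \eqref{eq.ContraPbart}. This gives
\begin{equation*}
u(t) \leq C\,\chi(\rho)^{n(1-\alpha_n)}\,\bar{\sE}_\rho(\Pb_t F)^{\alpha_n}\,|||F|||_n^{2(1-\alpha_n)},
\end{equation*}
which, after solving for $\bar{\sE}_\rho(\Pb_t F)$ and substituting into the identity for $u'(t)$, produces the differential inequality
\begin{equation*}
u'(t) \leq -\,c\,\bigl(\chi(\rho)^{n(1-\alpha_n)}|||F|||_n^{2(1-\alpha_n)}\bigr)^{-1/\alpha_n}\,u(t)^{1/\alpha_n},
\end{equation*}
with $c=c(n,d)>0$. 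Since $\frac{1}{\alpha_n}=1+\frac{2}{nd}>1$, this is a superlinear ODE of the form $u' \leq -K u^{1+2/(nd)}$.

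Finally, I would integrate by dividing by $u^{1/\alpha_n}$ and observing that $\frac{d}{dt}u^{1-1/\alpha_n} \geq c(1/\alpha_n-1)\cdot(\chi(\rho)^{n(1-\alpha_n)}|||F|||_n^{2(1-\alpha_n)})^{-1/\alpha_n}$. Integrating from $0$ to $t$ and discarding the nonnegative boundary term $u(0)^{1-1/\alpha_n}$, then raising to the power $-\alpha_n/(1-\alpha_n)=-nd/2$, one obtains
\begin{equation*}
u(t) \leq C(n,d)\,t^{-nd/2}\,\chi(\rho)^n\,|||F|||_n^2 \leq C(n,d,\rho)\,t^{-nd/2}\,|||F|||_n^2,
\end{equation*}
which is exactly \eqref{eq.fasterDecay}. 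The only non-trivial piece of the argument is the interaction between the Nash inequality and the contraction property \eqref{eq.ContraPbart}, which are precisely the two non-trivial inputs established before the statement; the ODE integration itself is routine, so there is no real obstacle once these tools are in hand.
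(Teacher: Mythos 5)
Your proposal is correct and follows essentially the same argument as the paper: differentiate $\bracket{(\Pb_t F)^2}_\rho$, apply Proposition~\ref{prop.Nash} together with the contraction \eqref{eq.ContraPbart} to obtain the superlinear ODE $u' \leq -K u^{1+2/(nd)}$, and integrate. The paper compresses the integration into the single display \eqref{eq.PbarDecay}, but the underlying computation is identical; your added remark that $\Pb_t F$ stays in $\tilde\HH_n$ (via Lemma~\ref{lem.eqKolmogrov} and \eqref{eq.ContraPbart}) is a useful explicit check that the paper leaves implicit.
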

	
	\begin{proof}
		For any $F\in\tilde{\HH}_n$, we have
		\begin{align*}
			\partial_t\expec{(\bar{P}_t F)^2}_\rho
			&=-2 \bar{\sE}_\rho(\bar{P}_tF)\\
			&\leq -C\expec{(\bar{P}_t F)^2}^{\frac{2+nd}{nd}}_\rho|||\bar{P}_t F|||^{-\frac{4}{nd}}_n\\
			&\leq -C\expec{(\bar{P}_t F)^2}^{\frac{2+nd}{nd}}_\rho|||F|||^{-\frac{4}{nd}}_n.
		\end{align*}
		The passage from the first line to the second line comes from Proposition~\ref{prop.Nash}, and we apply \eqref{eq.ContraPbart} from the second line to the third line. This implies
		\begin{equation}\label{eq.PbarDecay}
			\partial_t\Ll(\expec{(\bar{P}_t F)^2}_\rho^{-\frac{2}{nd}}\Rr)\geq C|||F|||_n^{-\frac{4}{nd}}.
		\end{equation}
		We can obtain the desired result by integrating  \eqref{eq.PbarDecay}.
	\end{proof}
	
	
	Now we can prove the main result in this section.
	\begin{proof}[Proof of Proposition~\ref{prop.AvePbarDecay}]
		The proof can be structured in three steps.
		
		\textit{Step~1: chaos expansion.}
		Since $u$ is a local function, there exists $N\in\N$ such that 
		\begin{equation*}
			u\in\bigoplus_{n=0}^{N}\HH_n.
		\end{equation*}
		Using the chaos expansion in Lemma~$\ref{lem.Chaos}$, we have the following identity
		\begin{equation*}
			u=\sum_{n=0}^{N}I_n(T_nu),
		\end{equation*}
		where $I_n(T_nu)\in\HH_n$ is the projection of $u$ on $\HH_n$. Since $u$ is a local function, we have $I_n(T_n u)\in\tilde{\HH}_n$. By the linearity of the semigroup $\bar{P}_t$ we obtain
		\begin{equation*}
			\bar{P}_tu=\sum_{n=0}^{N}\bar{P}_t I_n(T_nu).
		\end{equation*}
		Because of Lemma~\ref{lem.eqKolmogrov}, $\bar{P}_t I_n(T_nu)\in\HH_n$ and we have
		\begin{equation*}
			\bracket{\bar{P}_tu}_\rho =\bar{P}_t I_0(T_0u).
		\end{equation*}
		Using the orthogonal decomposition over $(\HH_n)_{n \in \N}$,  we have
		\begin{equation}\label{eq.decomL2Pbt'}
			\var_\rho[\bar{P}_tu]= \bracket{ \left(\sum_{n=1}^{N}\bar{P}_t I_n(T_nu)\right)^2}_\rho =\sum_{n=1}^{N} \bracket{\left(\bar{P}_t I_n(T_nu)\right)^2}_\rho.
		\end{equation}
		\smallskip
		
		\textit{Step~2: faster decay of higher-order terms.}
		Proposition~\ref{prop.fasterDecay} implies the faster decay of the heat kernel in higher dimension :
		\begin{align}\label{eq.fasterL2Pbt'}
			\forall n \geq 2, \qquad \bracket{\left(\bar{P}_t I_n (T_nu)\right)^2}_\rho \leq  Ct^{-\frac{nd}{2}}|||I_n(T_nu)|||^2_{n}.
		\end{align}
		Thus the main contribution for very large $t$ in \eqref{eq.decomL2Pbt'} is the case $n=1$.
		\smallskip
		
		\textit{Step~3: identification of the leading order.}
		For the case $n=1$ in \eqref{eq.decomL2Pbt'}, Lemma~\ref{lem.eqKolmogrov} suggests a function $f_t \in \ell^2(\Zd)$ such that  
		\begin{align*}
			\bar{P}_t I_1 (T_1u) = I_1(f_t).
		\end{align*}
		Furthermore, the calculation in \eqref{eq.Delta_Q_Closed} yields the following equation
		\begin{align*}
			I_1(\partial_t f_t) = \partial_t \bar{P}_t I_1 (T_1u) = \Lb \bar{P}_t I_1 (T_1u) = \Lb I_t(f_t) = I_1\Ll(\frac{1}{2}\Delta_Q f_t\Rr).
		\end{align*}
		Therefore, $\partial_t f_t = \frac{1}{2}\Delta_Q f_t$ with $f_0 = T_1 u$, 	which implies the explicit solution
		\begin{equation*}
			\bar{P}_t I_1(T_1u)=I_1(e^{t\Delta_Q/2}T_1u)=I_1(\bar{p}_t\ast(T_1u)).
		\end{equation*}
		Here the function $\bar{p}_t = e^{t\Delta_Q/2}$ and $\ast$ is the discrete convolution defined in \eqref{eq.defConvolution}.
		The isometric property \eqref{eq.ItoIso} then yields
		\begin{align}\label{eq.ItoL2Pbt'}
			\bracket{\left(\bar{P}_t I_1(T_1 u)\right)^2}_\rho = \bracket{\left(I_1(\bar{p}_t\ast(T_1u))\right)^2}_\rho = \chi(\rho) \norm{\bar{p}_t\ast(T_1u)}_{\ell^2(\Zd)}^2.
		\end{align}
		For the last term about the discrete heat kernel, Lemma~\ref{lem.NashZd} implies
		\begin{align}\label{eq.Order1L2Pbt'}
			\norm{\bar{p}_t\ast(T_1u)}_{\ell^2(\Zd)}^2 = \frac{(\sum_{x \in \Zd} T_1u(x) )^2}{\sqrt{(4\pi t)^d \det [\cm(\rho)]}}+ O( t^{-\frac{d}{2}-\frac{1}{2}}).
		\end{align}
		Since $u$ is a local function, we can show that $T_1 u$ is also a local function. Then from the perturbation formula in Lemma~\ref{lem.PertFormu}, we have
		\begin{equation}\label{eq.coeffL2Pbt'}
			\sum_{x\in\Zd}T_1u(x)=\frac{\d}{\d\rho} \E_\rho[u] =\tilde{u}'(\rho).
		\end{equation}
		Here we use the notation $\tilde{u}(\rho) = \bracket{u}_\rho$ defined in Theorem~\ref{thm.main}.
		
		Combing \eqref{eq.ItoL2Pbt'}, \eqref{eq.Order1L2Pbt'} and \eqref{eq.coeffL2Pbt'} and the definition $\cm(\rho) = 2 \DD(\rho)$, we conclude
		\begin{align}\label{eq.leadingL2Pbt'}
			\bracket{\left(\bar{P}_t I_1(T_1u)\right)^2}_\rho =  \frac{\tilde{u}'(\rho)^2  \chi(\rho)}{\sqrt{(8\pi t)^d \det [\DD(\rho)]}}+ O( t^{-\frac{d}{2}-\frac{1}{2}}).
		\end{align}
		The estimates \eqref{eq.decomL2Pbt'}, \eqref{eq.fasterL2Pbt'} and \eqref{eq.leadingL2Pbt'} complete the proof of Proposition~\ref{prop.AvePbarDecay}.
	\end{proof}

	\section{Homogenization of semigroup}\label{sec.HomoLinear}
	We are interested in the approximation between $P_t$ and $\bar{P}_t$. This is a classical topic in homogenization theory, and we develop its counterpart now in exclusion process. The main result in this section is the following statement.
	\begin{proposition}\label{prop.MixL2HkEs}
		There exist two finite positive constants $C(d,\lambda, \r,\rho),\beta(d,\lambda, \r)$, such that for every $F\in \oplus_{k=1}^{N}\tilde{\HH}_k$ with $N \in \N$, the following estimate holds for all $t > 0$:
		\begin{equation}\label{eq.MixL2HkEs}
			\norm{(P_t-\bar{P}_t)F}^2_{L^2} \leq C\Ll(t^{-2\beta} \norm{F}^2_{L^2}+\sum_{k=2}^{N}t^{-\frac{3kd}{8}}||| I_k(T_k F) |||^2_k\Rr).
		\end{equation}
	\end{proposition}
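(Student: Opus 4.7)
The plan is a two-scale expansion adapted to the Wiener--It\^o chaos representation. By Lemma~\ref{lem.Chaos} we decompose $F=\sum_{k=0}^{N}I_k(T_kF)$, and since the $k=0$ component is a constant that both $P_t$ and $\bar P_t$ fix, we have $(P_t-\bar P_t)F=\sum_{k=1}^{N}(P_t-\bar P_t)I_k(T_kF)$. The two terms on the right-hand side of \eqref{eq.MixL2HkEs} correspond to two regimes: the linear statistic $k=1$, where homogenization is the only decay mechanism available, and the higher chaos $k\geq 2$, where the sharp heat-kernel decay of Proposition~\ref{prop.fasterDecay} can also be exploited. The first term in \eqref{eq.MixL2HkEs} absorbs the $k=1$ estimate via the orthogonality bound $\|I_1(T_1F)\|_{L^2}\leq\|F\|_{L^2}$.

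For $k=1$, the key observation is that $\bar P_t I_1(T_1F)=I_1(\bar p_t\ast T_1F)$ solves a pure discrete heat equation with diffusion matrix $\DD(\rho)$ on $\Zd$, as in Step~3 of the proof of Proposition~\ref{prop.AvePbarDecay}, whereas $P_t I_1(T_1F)$ does not even remain in $\HH_1$. I introduce the Varadhan corrector $\phi=(\phi_1,\dots,\phi_d)$ associated with the non-gradient quadratic form \eqref{eq.defQuadra} and its mesoscopic truncation $\phi^{(L)}$ at a scale $L=L(t)$ chosen as a small power of $t$. Writing $v_s:=\bar P_sI_1(T_1F)$, I build the two-scale ansatz
$$w_s(\eta):=v_s(\eta)+\sum_{i=1}^{d}\sum_{x\in\Zd}\partial_{e_i}(\bar p_s\ast T_1F)(x)\,(\tau_x\phi^{(L)}_i)(\eta),$$
compute the residual $(\partial_s-\L)w_s$, and apply Duhamel's formula together with the $L^2$-contraction of $P_{t-s}$ to express $(P_t-\bar P_t)I_1(T_1F)$ as an integral of this residual plus boundary contributions at $s=0$ and $s=t$. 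Each piece is then controlled by combining the quantitative corrector estimates of \cite{funaki2024quantitative} with the $s^{-1/2}$ decay of the discrete gradients of the SEP heat kernel supplied by Lemma~\ref{lem.NashZd}. Optimising in $L$ yields an algebraic exponent $\beta=\beta(d,\lambda,\r)>0$.

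For $k\geq 2$ the same ansatz is applied to $v_s:=\bar P_sI_k(T_kF)$. The Duhamel identity reads
$$(P_t-\bar P_t)I_k(T_kF)=\int_0^t P_{t-s}(\L-\bar\L)\bar P_s I_k(T_kF)\,ds,$$
and I split the time integral at an intermediate scale $t_\ast\in(0,t)$. On $[0,t_\ast]$ I use the sharp decay $\|\bar P_sI_k(T_kF)\|_{L^2}^2\leq Cs^{-kd/2}|||I_k(T_kF)|||_k^2$ from Proposition~\ref{prop.fasterDecay} together with the $L^2$-contraction of $P_{t-s}$ and the local boundedness of $\L-\bar\L$. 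On $[t_\ast,t]$ I iterate the two-scale corrector argument from the $k=1$ case, now with a source that decays faster in $s$. Balancing the two regimes by optimising simultaneously in $L$ and in $t_\ast$ produces the effective exponent $3kd/8$, a strict fraction of the sharp rate $kd/2$ reflecting the homogenization error paid in the corrector step, with the prefactor $|||I_k(T_kF)|||_k^2$ inherited from Proposition~\ref{prop.fasterDecay}.

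The main obstacle is the construction and quantitative control of the corrector expansion in the non-gradient exclusion setting. Unlike in the gradient models treated in \cite{jlqy, lanyau}, the microscopic current cannot be written as a discrete divergence, so $\phi$ must be built via a Hodge-type decomposition on the configuration space (cf.\ \cite{varadhanII, bannai2021varadhan, BS25}); obtaining a positive algebraic rate $\beta$ requires the full strength of the recent quantitative homogenization of \cite{funaki2024quantitative}. Propagating these bounds through the chaos-expansion framework --- in particular tracking the Bernoulli orthogonality through the corrector terms, which themselves live in every chaos level simultaneously --- is the technical heart of the section and the bridge between the Bertini--Zegarli\'nski approach of Section~\ref{sec.SEP} and the regularization machinery of Section~\ref{sec.Reg}.
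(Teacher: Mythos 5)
Your high-level strategy (two-scale expansion compatible with the chaos decomposition, combined with the faster decay of Proposition~\ref{prop.fasterDecay} for the higher chaos levels) is the right one, and the exponent $\tfrac{3kd}{8}$ is indeed what balancing produces. However, the proposal departs from the paper's proof at two places where the change is not a matter of taste but would actually break the argument.

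First, you propose to apply the corrector ansatz to \emph{every} chaos level, writing ``the same ansatz is applied to $v_s:=\bar P_sI_k(T_kF)$'' for $k\geq 2$. This cannot be done with the ansatz you wrote: the coefficient $\partial_{e_i}(\bar p_s\ast T_1F)(x)$ in your $w_s$ is the slope of the \emph{linear} statistic, as in \eqref{eq.defSlopeH1}. For $k\geq 2$ the function $\bar P_sI_k(T_kF)$ has no analogous deterministic local slope along $\ell_{e_i}$ to feed into the corrector. The paper avoids this entirely: the two-scale expansion \eqref{eq.defTwoScaleLinear} corrects \emph{only} the $\HH_1$ projection, and the $\Pi_{\geq 2}$ parts are left uncorrected and absorbed as $\norm{\Pi_{\geq 2}\bar G_t}_{L^2}$ in Lemma~\ref{lem.L2linearStats} and Proposition~\ref{prop.TwoScaleLinear}, to be killed afterwards by Proposition~\ref{prop.fasterDecay}. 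The proof of Proposition~\ref{prop.MixL2HkEs} does not in fact split by $k$ at all.

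Second, and more seriously, the proposal has no substitute for the pre-smoothing step. If you run Duhamel directly on $I_1(T_1F)$, the residual of the two-scale ansatz involves second finite differences $\D_{e_i}\D_{e_j}(\bar p_s\ast T_1F)$, and the integral $\int_0^t\dots\,\d s$ is not summable near $s=0$ with only $\norm{T_1F}_{\ell^2}$ control. Concretely, Proposition~\ref{prop.TwoScaleLinear} applied to a general $G$ produces terms $3^m\norm{\Pi_1 G}_{\dH^1}$ and $3^m t^{5/8}\norm{\Pi_1 G}_{\dH^2}$ that do not decay in $t$. The paper resolves this by the duality identity $\norm{(P_t-\bar P_t)F}_{L^2}=\sup_{\norm{G}_{L^2}=1}\bracket{(P_t-\bar P_t)F,G}_\rho$, reversibility, and the elementary regularization Lemma~\ref{lem.Regularization}, which together replace $F$ by $\bar P_\tau F$ at a cost $\sqrt{C\tau/t}\,\norm{F}_{L^2}$; then $\norm{\Pi_1\bar P_\tau F}_{\dH^1}\lesssim\tau^{-1/2}\norm{F}_{L^2}$ and $\norm{\Pi_1\bar P_\tau F}_{\dH^2}\lesssim\tau^{-1}\norm{F}_{L^2}$, and $\tau=t^{3/4}$ gives the quoted exponents (in particular $\tau^{-kd/2}=t^{-3kd/8}$ for the higher chaos). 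Your ``optimising simultaneously in $L$ and $t_\ast$'' gestures at the same balance, but without the $\bar P_\tau$ pre-smoothing the boundary contributions at small $s$ are not under control, so the algebraic rate $\beta$ is not actually attained.

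Two smaller discrepancies worth noting. Your ansatz attaches a corrector $\tau_x\phi^{(L)}_i$ to \emph{each site} $x$ weighted by the pointwise gradient, whereas the paper attaches one local corrector $\phi^z_{m,e_i}$ per triadic cube $z+\cu_m$ weighted by the cube-averaged slope $(\D_{e_i}g_t)_{z+\cu_m}$; this cube-localised form is what makes the correctors at different cubes independent and lets $\norm{\GG_t-\bar G_t}_{L^2}$ be computed cleanly in Lemma~\ref{lem.L2linearStats}. Finally, the paper proceeds by an energy identity on $\tilde G_s^{(1)}-G_s$ rather than Duhamel; that is a stylistic difference. But the Hodge-decomposition references you invoke are not used here: the quantitative corrector bounds the paper actually needs all come from the local variational formula of \cite{funaki2024quantitative} as recorded in Proposition~\ref{prop.phiz}.
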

	\begin{remark}
		A similar estimate for particles in continuum configuration space was developed in \cite[Proposition~4.1]{gu2024quantitative}, but \eqref{prop.MixL2HkEs} is stronger as it covers more functions.
	\end{remark}

	The proof makes use of \emph{the two-scale expansion ansatz}, which shares the same spirit of the \emph{gradient replacement} in Varadhan's argument for non-gradient models. Our proof follows the work \cite{zhikov2006estimates} and \cite{gu2024quantitative}, and the Wiener--It\^o analysis in Section~\ref{sec.SEP} also involves.  We will recall some basic results about the correctors in Section~\ref{subsec.corrector}, then present the proof in Section~\ref{subsec.twoscale} and ~\ref{subsec.reg}.

	\medskip
	
	The following notations will only be used in this section. The domain with various boundary conditions $\Lambda^-, \overline{\Lambda^*}, \Lambda^+$ are defined as follows. We define $\partial \Lambda$ as the boundary set of $\Lambda$, and denote by $\Lambda^-$ its interior that 
	\begin{align}\label{eq.defBoundary}
		\partial \Lambda := \{ x \in \Lambda : \exists y \notin \Lambda, y \sim x\}, \qquad \Lambda^- := \Lambda \setminus \partial \Lambda.
	\end{align}
	Recall that the bonds set of $\Lambda$ is defined as $\Lambda^*$ in \eqref{eq.defBond}. We  define its enlarged version 
	\begin{align}\label{eq.defBondLarge}
		\overline{\Lambda^*} := \{\{x,y\}: x \in \Lambda, y = x+e_i, i=1,2, \cdots, d\},
	\end{align}
	where $e_i \in \Zd$ is the $i$-th directed unit vector. We also denote by $\Lambda^+$ the vertices concerned in \eqref{eq.defBondLarge}
	\begin{align} \label{eq.defVertexLarge}
		\Lambda^+:=\Lambda\cup\bigcup_{i=1}^d(\Lambda+e_i).
	\end{align}

	For every $\Lambda \subset \Zd$ and $\xi \in \Rd$, the affine function defined as follows will be used 
	\begin{align}\label{eq.defAffine}
		\ell_{\xi,\Lambda} := \sum_{x \in \Lambda} (\xi \cdot x) \eta_x .
	\end{align}
	
	For all $m \in \N$, let $\cu_m$ stand for the lattice cube of side length $3^m$ 
	\begin{align}\label{eq.defcu}
		\cu_m := \Ll(- \frac{3^m}{2}, \frac{3^m}{2}\Rr)^d \bigcap \Zd.
	\end{align}
	We also use the notation $\Z_m := 3^m \Zd$ as the centers of triadic cubes.
	
	\subsection{Local corrector and centered flux}\label{subsec.corrector}
	To understand how $P_t$ is close to $\Pb_t$, a similar but more fundamental question is how the local functions in  \eqref{eq.defQuadra} can approximate the diffusion matrix $\DD(\rho)$ and the conductivity $\c(\rho)$; see their definitions in \eqref{eq.Einstein} and \eqref{eq.defC}. The previous work \cite{funaki2024quantitative} was devoted to this question, and its main result (\cite[Theorem~1.3]{funaki2024quantitative}) confirms a quantitative version of \eqref{eq.defQuadra} that 
	\begin{align}\label{eq.thm_main_FGW24}
		\inf_{F_L \in \F^d_0(\Lambda_L)} \sup_{\rho \in [0,1]} 	\vert \c(\rho; F_L) - \c(\rho) \vert \leq C L^{-\gamma}.
	\end{align}
	The finite positive constants $C, \gamma$ here only depend on $d, \r, \lambda$. The object of this part is to extract the key ingredients in \eqref{eq.thm_main_FGW24} to study $P_t$ and $\Pb_t$.

	The key ingredient is the function to minimize \eqref{eq.thm_main_FGW24}. Precisely, in \cite[(1.20),(4.1)]{funaki2024quantitative}, for every vector $p \in \Rd$, we introduce a variational formula
	\begin{equation}\label{eq.defnu}
		\begin{split}
			\nub(\rho,\cu_m,p) &:= \inf_{v\in\ell_{p,\cu_m^+} +\F_0(\cu_m^-)} \Ll\{ \frac{1}{2 \chi(\rho)\vert\cu_m\vert} \sum_{b\in\ov{\cu_m^*}} \bracket{ \frac{1}{2}c_b(\pi_b v)^2}_\rho \Rr\},\\
			&= \frac{1}{2} p \cdot \DDb(\rho, \cu_m) p.
		\end{split}
	\end{equation}
	Based on the approximated diffusion matrix $\DDb(\rho,\cu_m)$, we define the approximated conductivity as
	\begin{align}\label{eq.defconduct_local}
		\cc(\rho,\cu_m):=2\chi(\rho)\DDb(\rho,\cu_m).
	\end{align}
	The convergences $\DDb(\rho,\cu_m) \xrightarrow{m \to \infty} \DD(\rho)$ and $\cc(\rho,\cu_m) \xrightarrow{m \to \infty} \c(\rho)$ are proved in \cite[Proposition~5.1]{funaki2024quantitative}. They are the basis to study \eqref{eq.thm_main_FGW24}.
	
	Concerning the minimizer, we denote by $v(\cdot,\rho,\cu_m,p)$ the unique minimizer of $\nub(\rho,\cu_m,p)$ satisfying $\bracket{v-\ell_{p,\cu_m^+}}_\rho = 0$. If $c_b \equiv 1$ for all $b \in (\Zd)^*$, i.e. the exclusion process is SSEP, then we observe that ${v(\cdot,\rho,\cu_m,p) =\ell_{p,\cu_m^+}}$. Therefore, the local function part $\F_0(\cu_m^-)$ in \eqref{eq.defnu} aims to reduce the perturbation from the rate $c_b$.  In the homogenization theory, this part is called corrector and is usually much smaller compared to the affine part. In our context, the rigorous definition of \emph{the local corrector} is that
	\begin{equation}\label{eq.defCorrector}
		\begin{split}
			\phi^0_{m, e_i} &:= v(\cdot,\rho,\cu_m,e_i) - \ell_{e_i,\cu_m^+}, \\
			\phi^z_{m, e_i} &:= \tau_z  \phi^0_{m, e_i}.
		\end{split}
	\end{equation}
	The local corrector then defines \emph{the centered flux} 
	\begin{align}\label{eq.defFlux}
		\g_{m,e_i, b}^z := c_b  \pi_b (\ell_{e_i} + \phi^z_{m, e_i} ) - \pi_b \ell_{\DD(\rho) e_i},
	\end{align}
	and $\g_{m,e_i, b}^z$ can create spatial cancellation, which plays a similar role as Varadhan's gradient replacement (see \cite[Chapter~7.1]{kipnis1998scaling}). 
	
	Viewing the discussions above, we summarize some important properties about $\phi^z_{m, e_i}$ and $\g_{m,e_i, b}^z$ from \cite{funaki2024quantitative}.
	\begin{proposition}\label{prop.phiz}
		We have the following properties for $\phi^z_{m, e_i}$ and $\g_{m,e_i, b}^z$.
		\begin{enumerate}
			\item \emph{(Locality)}: the local corrector $\phi^z_{m, e_i}$ is a local function and  $\fil_{z+\cu_m^-}$-measurable. 
			\item \emph{(Mean)}: the local corrector $\phi^z_{m, e_i}$ is centered  
			\begin{align}\label{eq.phiz_mean}
				\bracket{ \phi^z_{m, e_i}}_{\rho, z+\cu_m} = 0.
			\end{align}
			\item \emph{($L^2$-bound)}: the local corrector $\phi^z_{m, e_i}$ satisfies a uniform bound: for all ${\rho \in (0,1)}, m \in \N$ and $z \in \Zd$, we have 
			\begin{align}\label{eq.L2phiz}
				\bracket{\Ll(\phi^z_{m, e_i}\Rr)^2 }_{\rho} \leq 16 \lambda \chi(\rho) 3^{(d+2)m}.
			\end{align}
			\item \emph{(Spatial cancellation)}: there exists an exponent $\alpha(d,\lambda, \r) > 0$ and a positive constant $C(d, \lambda, \r) < \infty$, such that for every $v : \X \to \R$, we have 
			\begin{align}\label{eq.FluxCancel}
				\Ll\vert \frac{1}{\vert \cu_m \vert} \sum_{b \in \ov{(z+\cu_m)^*} }\bracket{(\pi_b v) \g_{m,e_i, b}^z}_{\rho} \Rr\vert \leq C 3^{-\alpha m}\Ll(\frac{1}{\vert \cu_m \vert} \sum_{b \in \ov{(z+\cu_m)^*} }\bracket{(\pi_b v)^2}_{\rho}\Rr)^{\frac{1}{2}}.
			\end{align}
		\end{enumerate}
	\end{proposition}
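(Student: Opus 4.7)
The plan is to deduce the four items from the variational characterization \eqref{eq.defnu} of $v(\cdot,\rho,\cu_m,e_i)$ and the quantitative homogenization toolkit developed in \cite{funaki2024quantitative}. Items~(1) and~(2) will fall out directly from the definitions. The minimization set in \eqref{eq.defnu} is the affine subspace $\ell_{e_i,\cu_m^+}+\F_0(\cu_m^-)$, so its unique minimizer differs from $\ell_{e_i,\cu_m^+}$ by an element of $\F_0(\cu_m^-)$, giving the $\fil_{\cu_m^-}$-measurability of $\phi^0_{m,e_i}$; translating by $z$ promotes this to $\fil_{z+\cu_m^-}$-measurability. The centering \eqref{eq.phiz_mean} is then just the translate of the normalization $\bracket{v-\ell_{e_i,\cu_m^+}}_\rho=0$ imposed right after \eqref{eq.defCorrector}, using the translation invariance of $\P_\rho$.

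For Item~(3), I would first obtain an energy estimate by plugging the trivial test function $v=\ell_{e_i,\cu_m^+}$ into \eqref{eq.defnu}. A direct calculation based on the identity $\pi_{x,y}\ell_{e_i,\cu_m^+}=(e_i\cdot(y-x))(\eta_x-\eta_y)$, on $\bracket{(\eta_x-\eta_y)^2}_\rho=2\chi(\rho)$ and on $c_b\le\lambda$ yields $\bar\nu(\rho,\cu_m,e_i)\le\lambda/2$, hence
\[
\sum_{b\in\ov{\cu_m^*}}\bracket{c_b(\pi_b v)^2}_\rho\le 2\lambda\chi(\rho)\vert\cu_m\vert
\]
for the minimizer $v$; the same bound without the $c_b$ prefactor follows from $c_b\ge 1$. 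Next, I would invoke the Kawasaki spectral-gap type estimate on $\cu_m$ under $\P_\rho$ used in \cite{funaki2024quantitative}, which pays a factor $(3^m)^2$ to convert this Dirichlet energy into the $L^2$ norm of $\phi^0_{m,e_i}$. Combined with $\vert\cu_m\vert=3^{md}$ and the centering from Item~(2), tracking constants gives the asserted bound $16\lambda\chi(\rho)3^{(d+2)m}$; translation invariance then extends it to every $z\in\Zd$.

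Item~(4) will be the main obstacle. The centered flux $\g^z_{m,e_i,b}$ is engineered so that its interior part vanishes by Euler--Lagrange and its boundary part is controlled by the quantitative convergence $\vert\cc(\rho,\cu_m)-\c(\rho)\vert\le C 3^{-\gamma m}$ from \cite[Theorem~1.3]{funaki2024quantitative}. Concretely, I would split
\[
\frac{1}{\vert\cu_m\vert}\sum_{b\in\ov{(z+\cu_m)^*}}\bracket{(\pi_b v)\g^z_{m,e_i,b}}_\rho=(\mathrm{I})+(\mathrm{II}),
\]
with $(\mathrm{I})$ retaining the $c_b\pi_b(\ell_{e_i}+\phi^z_{m,e_i})$ piece and $(\mathrm{II})$ collecting the homogenized flux $\pi_b\ell_{\DD(\rho)e_i}$. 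For $(\mathrm{I})$, the Euler--Lagrange identity $\sum_b\bracket{c_b\pi_b(\ell_{e_i}+\phi^z_{m,e_i})\pi_b\psi}_\rho=0$, valid for any $\psi\in\F_0((z+\cu_m)^-)$, lets me subtract an interior surrogate of $v$ and reduce the sum to a boundary layer of width $O(1)$; Cauchy--Schwarz combined with the Item~(3) energy bound and the surface-to-volume ratio produces a gain of order $3^{-m}$. For $(\mathrm{II})$, the identity $\cc(\rho,\cu_m)=2\chi(\rho)\DDb(\rho,\cu_m)$ together with \eqref{eq.thm_main_FGW24} shows that the directional spatial average of $c_b\pi_b(\ell_{e_i}+\phi^z_{m,e_i})$ matches $\pi_b\ell_{\DD(\rho)e_i}$ up to an $L^2$ error of order $3^{-\gamma m}$; another Cauchy--Schwarz against the energy of $v$ concludes. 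Setting $\alpha:=\frac{1}{2}\min(1,\gamma)$ yields \eqref{eq.FluxCancel}. The delicate step will be aligning the boundary-layer geometry of $(\mathrm{I})$ with the averaging window underlying $(\mathrm{II})$ so that both errors come out in a common polynomial scale $3^{-\alpha m}$, with a constant depending only on $d,\lambda,\r$.
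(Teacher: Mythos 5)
Items~(1)--(3) of your plan are essentially the paper's argument. Locality and centering are read off the definition and the normalization of the minimizer, and the $L^2$-bound combines the elementary energy estimate for $\bar\nu(\rho,\cu_m,e_i)$ with the spectral gap inequality on the box $\cu_m$, exactly as the paper does (it cites the bound $\bar\nu\le 2\lambda$ from \cite[(4.2)]{funaki2024quantitative} and the spectral gap from \cite[Lemma~2.4]{funaki2024quantitative}).

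Your Item~(4), however, has a genuine gap. You rely on the primal Euler--Lagrange identity
$\sum_{b\in\ov{\cu_m^*}}\bracket{c_b\,\pi_b(\ell_{e_i}+\phi^0_{m,e_i})\,\pi_b\psi}_\rho=0$,
which is valid only for $\psi\in\F_0(\cu_m^-)$, and propose to ``subtract an interior surrogate of $v$ and reduce the sum to a boundary layer of width $O(1)$.'' But the test function $v$ in \eqref{eq.FluxCancel} is an arbitrary function on $\X$ with no support restriction, and there is no $\psi\in\F_0(\cu_m^-)$ whose bond gradients $\pi_b\psi$ agree with $\pi_b v$ on all interior bonds; the defect $v-\psi$ contributes on every bond, not just a thin layer, so the surface-to-volume gain you invoke does not materialize. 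In addition, your term $(\mathrm{II})=\frac{1}{|\cu_m|}\sum_b\bracket{(\pi_b v)\,\pi_b\ell_{\DD(\rho)e_i}}_\rho$ is not a quantity that the convergence $\vert\cc(\rho,\cu_m)-\c(\rho)\vert\le C3^{-\gamma m}$ controls on its own; the homogenized flux has to be compared against \emph{something} whose Euler--Lagrange structure survives arbitrary test functions.

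The paper avoids both problems by switching to the \emph{dual} variational problem: the maximizer $u(\cu_m,q)$ of $\bar\nu_*(\rho,\cu_m,q)$ satisfies the exact identity
$\sum_{b\in\ov{\cu_m^*}}\bracket{c_b(\pi_b u(\cu_m,q))(\pi_b v)}_\rho=\sum_{b\in\ov{\cu_m^*}}\bracket{(\pi_b\ell_q)(\pi_b v)}_\rho$
for \emph{every} $v:\X\to\R$, with no compact-support restriction and hence no boundary layer. The flux cancellation is then split into three pieces: $\mathbf{II}$ vanishes identically by this dual Euler--Lagrange with $q=\DDb_*(\rho,\cu_m)e_i$, while $\mathbf{I}$ (the difference between the primal minimizer $\ell_{e_i}+\phi^0_{m,e_i}$ and the dual maximizer) and $\mathbf{III}$ (the difference between $\DDb_*(\rho,\cu_m)$ and $\DD(\rho)$) are both controlled by the master quantity $J(\rho,\cu_m,p,q)$ of \cite[Prop.~4.7]{funaki2024quantitative}, whose polynomial decay in $m$ is established in \cite[Lemma~5.5]{funaki2024quantitative}. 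The exponent $\alpha$ in \eqref{eq.FluxCancel} comes from interpolating the uniform bound and the polynomial decay of $J$, not from a surface-to-volume argument. You should cite these two intermediate results rather than only Theorem~1.3 of that reference, which is a consequence of them and does not directly deliver what \eqref{eq.FluxCancel} needs.
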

	\begin{proof}
		The properties (1) and (2) just follow the definition of $\phi^z_{m, e_i}$. Concerning the $L^2$ moment in (3), we utilize the bound of $\nub(\rho,\cu_m,e_i) \leq 2 \lambda$ (see \cite[(4.2)]{funaki2024quantitative}), which implies that
		\begin{align*}
			\sum_{b\in\ov{\cu_m^*}} \bracket{ (\pi_b \phi^0_{m, e_i})^2}_\rho &\leq \sum_{b\in\ov{\cu_m^*}}\bracket{ c_b(\pi_b \phi^0_{m, e_i})^2}_\rho \\
			& \leq 2\Ll( 2\chi(\rho) \vert \cu_m\vert \nub(\rho,\cu_m,e_i) + \sum_{b\in\ov{\cu_m^*}}\bracket{ c_b(\pi_b \ell_{e_i})^2}_\rho \Rr) \\
			& \leq 16 \lambda \chi(\rho) 3^{dm}.
		\end{align*}
		Here the uniform ellipticity of $c_b$ is used in the first line. Then we apply the spectral gap inequality in \cite[Lemma~2.4]{funaki2024quantitative}, and obtain
		\begin{align*}
			\bracket{\Ll(\phi^0_{m, e_i}\Rr)^2 }_{\rho}  \leq 3^{2m}  \sum_{b\in\ov{\cu_m^*}} \bracket{ (\pi_b \phi^0_{m, e_i})^2}_\rho \leq 16 \lambda \chi(\rho) 3^{(d+2)m}.
		\end{align*}
		
		\smallskip
		
		The quantitative convergence in (4) is non-trivial, since it is deduced from the intermediate steps of the proof to \eqref{eq.thm_main_FGW24}. In previous work \cite{funaki2024quantitative}, we proposed a dual quantity
		\begin{align*}
			\nub_*(\rho,\cu_m,q) &:=\sup_{v \in  \F_0} \Ll\{ \frac{1}{2 \chi(\rho)\vert \cu_m \vert}\sum_{b\in\ov{\cu_m^*}}  \bracket{ (\pi_b \ell_{q})(\pi_b v) - \frac{1}{2} c_b(\pi_b v)^2}_{\rho}\Rr\} \\
			&= \frac{1}{2} q \cdot \DDb_*^{-1}(\rho,\cu_m)q.
		\end{align*}
		Because $q \mapsto \nub_*(\rho, \cu_m, q)$ is quadratic, we have the expression in the second line above, where $\DDb_*^{-1}(\rho,\cu_m)$ is a positive definite matrix. We denote by $u(\cu_m, q)$ the maximizer of $\nub_*(\rho, \cu_m,q)$. Then for every function $v : \X \to \R$, the variational formula yields
		\begin{align}\label{eq.u_variational}
			\sum_{b \in \ov{\cu_m^*} }\bracket{c_b(\pi_b u(\cu_m, q))(\pi_b v)}_\rho = \sum_{b \in \ov{\cu_m^*} }\bracket{(\pi_b\ell_{q})(\pi_b v)}_\rho.
		\end{align} 
		Denoting by
		\begin{align*}
			\DDb_m(\rho) := \DDb_*(\rho,\cu_m),
		\end{align*}
		and admitting a heuristic
		\begin{align}\label{eq.uv_heuristic}
			u(\cu_m, \DDb_m(\rho)e_i) \simeq  \ell_{e_i,\cu_m^+} + \phi^0_{m, e_i}, \qquad \DDb_m(\rho) \simeq \DD(\rho).
		\end{align}
		then  \eqref{eq.u_variational} is nearly as the {\lhs} of \eqref{eq.FluxCancel} when $q = \DDb_m(\rho)e_i$. 
		
		Hence, we have the following decomposition
		\begin{align*}
			\frac{1}{\vert \cu_m \vert} \sum_{b \in \ov{\cu_m^*} }\bracket{\g_{m,e_i, b}^0 (\pi_b v) }_{\rho}  &= \mathbf{I} + \mathbf{II} + \mathbf{III},
		\end{align*}
		with the 3 terms defined as follows. We use the shorthand notation $u_m \equiv u(\cu_m, \DDb_m(\rho)e_i)$. 
		\begin{align*}
			\mathbf{I} &:= \frac{1}{\vert \cu_m \vert} \sum_{b \in \ov{\cu_m^*} }\bracket{c_b\pi_b (\ell_{e_i} + \phi^0_{m, e_i} - u_m)(\pi_b v) }_{\rho},\\
			\mathbf{II} &:= \frac{1}{\vert \cu_m \vert} \sum_{b \in \ov{\cu_m^*} }\bracket{\Ll(c_b\pi_b u_m - \pi_b \ell_{\DDb_m(\rho) e_i}\Rr)(\pi_b v) }_{\rho},\\
			\mathbf{III} &:= \frac{1}{\vert \cu_m \vert} \sum_{b \in \ov{\cu_m^*} }\bracket{\Ll(\pi_b \ell_{\DDb_m(\rho) e_i} - \pi_b \ell_{\DD(\rho) e_i}\Rr)(\pi_b v) }_{\rho}.\\
		\end{align*}
		Here $\mathbf{II} = 0$ thanks to \eqref{eq.u_variational} with $q = \DDb_m(\rho) e_i$. Thus we only need to treat $\mathbf{I}$ and $\mathbf{III}$, and use Cauchy--Schwarz inequality to obtain a bound that
		\begin{multline}\label{eq.J_1}
			\Ll\vert \frac{1}{\vert \cu_m \vert} \sum_{b \in \ov{\cu_m^*} }\bracket{\g_{m,e_i, b}^0 (\pi_b v) }_{\rho} \Rr\vert 
			\leq \Ll(\frac{1}{\vert \cu_m \vert} \sum_{b \in \ov{\cu_m^*} }\bracket{(\pi_b v)^2 }_{\rho} \Rr)^{\frac{1}{2}}\\
			\times \Ll(\Ll(\frac{1}{\vert \cu_m \vert} \sum_{b \in \ov{\cu_m^*} }\bracket{c_b(\pi_b (\ell_{e_i} + \phi^0_{m, e_i} - u_m))^2 }_{\rho} \Rr)^{\frac{1}{2}} + \chi(\rho)^{\frac{1}{2}} \vert \DDb_m(\rho) - \DD(\rho) \vert\Rr).
		\end{multline}
		The error in the second line requires a precise quantitative estimate of the heuristic in \eqref{eq.uv_heuristic}. They are related to the master quantity $J(\rho, \cu_m, p, q)$ in \cite[Proposition~4.7]{funaki2024quantitative}. Especially, Lemma~4.6 and  Proposition~4.7 in  \cite{funaki2024quantitative} respectively yield
		\begin{equation}\label{eq.J_2}
			\begin{split}
				\Ll(\frac{1}{\vert \cu_m \vert} \sum_{b \in \ov{\cu_m^*} }\bracket{c_b(\pi_b (\ell_{e_i} + \phi^0_{m, e_i} - u_m))^2 }_{\rho} \Rr)^{\frac{1}{2}} &\leq \chi(\rho)^{\frac{1}{2}} J(\rho, \cu_m, e_i, \DDb_m e_i)^{\frac{1}{2}}, \\
				\chi(\rho)^{\frac{1}{2}} \vert \DDb_m(\rho) - \DD(\rho) \vert &\leq \chi(\rho)^{\frac{1}{2}} J(\rho, \cu_m, e_i, \DDb_m e_i)^{\frac{1}{2}}.
			\end{split}
		\end{equation}
		The last term $J(\rho, \cu_m, e_i, \DDb_m e_i)$ is positive and has a uniform bound (see \cite[(4.15)]{funaki2024quantitative} and the bound of $\nub, \nub_*$)
		\begin{align*}
			0 \leq J(\rho, \cu_m, e_i, \DDb_m e_i) \leq 12 \lambda.
		\end{align*}
		It also has a polynomial decay (see \cite[Lemma~5.5]{funaki2024quantitative}): there exist two finite positive constants $C, \gamma_1$ only depending on $d,\lambda, \r$, such that for all $\rho \in (0,1)$ and $m \in \N$, we have 
		\begin{align*}
			\chi(\rho)^2 J(\rho, \cu_m, e_i, \DDb_m e_i)  \leq C 3^{-\gamma_1 m}.
		\end{align*}
		We make an interpolation between the two results above, then put it back to \eqref{eq.J_1} and \eqref{eq.J_2} to conclude the desired result.
	\end{proof}

	\subsection{Two-scale expansion of linear statistic}\label{subsec.twoscale}
	Throughout this section, we fix a function $G \in L^2(\X, \fil, \Pr)$ and denote by 
	\begin{align}\label{eq.defGt}
		G_t := P_t G, \qquad \bar G_t := \Pb_t G.
	\end{align}
	We study the quantitative homogenization of semigroup. We recall the notations \eqref{eq.DefTn} and \eqref{eq.DefIn} in Fock space, and also introduce the projection operator  
	\begin{align}\label{eq.defProjection}
		\forall n \in \N, \qquad \Pi_{n} G := I_n(T_n G), \qquad \Pi_{\geq n} G := \sum_{k \geq n} \Pi_k G.
	\end{align}
	We are especially interested in the leading order term, which is denoted by
	\begin{align}\label{eq.defgt}
		g := T_1 G, \qquad g_t := T_1 \bar{G}_t,
	\end{align}
	and $\Pi_1 \bar{G}_t$ is simplified as  
	\begin{align*}
		\Pi_1 \bar{G}_t = I_1 (T_1 \bar{G}_t) = I_1(g_t).
	\end{align*}
	Then $g$ and $g_t$ can be treated as functions defined on $\Zd$. Using $\bar{p}_t = e^{\Ll(\frac{1}{2}\Delta_Q \Rr)t}$ and \eqref{eq.Delta_Q_Closed} and Step~3 in the proof of Proposition~\ref{prop.AvePbarDecay}, $g_t$ satisfies the expression that 
	\begin{align*}
		g_t = \bar{p}_t \ast g.
	\end{align*}

	We propose \emph{the two-scale expansion} for linear statistic as
	\begin{align}\label{eq.defTwoScaleLinear}
		\GG_t := \Pi_1 \bar{G}_t +  \sum_{z \in \Z_m} \sum_{i = 1}^d (\D_{e_i} g_t)_{z+\cu_m} \phi^z_{m, e_i}.
	\end{align}
	Here $\Z_m$ is defined in \eqref{eq.defcu} and $\D_{h}$ is the finite difference operator  on $\Zd$ that 
	\begin{align}\label{eq.defD}
		\forall h \in \Zd, x \in \Zd, \qquad (\D_h g)(x) := g(x+h) - g(x),
	\end{align}
	and $(\D_{e_i} g_t)_{z+\cu_m}$ is the local average of $\D_{e_i} g_t$ in $z+\cu_m$
	\begin{align}\label{eq.DgLocalAvg}
		(\D_{e_i} g_t)_{z+\cu_m} := \frac{1}{\vert \cu_m \vert} \sum_{x \in z + \cu_m } \D_{e_i} g_t(x).
	\end{align}
	
	A similar two-scale expansion can be found in \cite[eq.(4.4)]{gu2024quantitative}. Let us explain the intuition of such expansion in exclusion model. As we know from \eqref{eq.defnu}, the corrector $\phi^z_{m, e_i}$ is the local correction of the function $\ell_{e_i}$. If we hope to correct $\bar{G}_t$, we need to express local increment as a linear combination of $\{\ell_{e_i}\}_{1 \leq i \leq d}$. Then, a natural candidate of the slope along direction $e_i$ is 
	\begin{align}\label{eq.defSlope}
		\frac{\pi_{x, x+e_i} \bar{G}_t}{\pi_{x, x+e_i} \ell_{e_i}}.
	\end{align}
	Here we take the convention $\frac{0}{0} = 0$. This slope is still complicated generally, but its version for the projection in $\HH_1$ is simple
	\begin{align}\label{eq.defSlopeH1}
		\frac{\pi_{x, x+e_i} \Pi_1 \bar{G}_t}{\pi_{x, x+e_i} \ell_{e_i}} = \frac{\pi_{x, x+e_i} I_1(g_t)}{\pi_{x, x+e_i} \ell_{e_i}} = \frac{(g_t(x+e_i) - g_t(x))(\eta_x - \eta_{x+e_i})}{\eta_x - \eta_{x+e_i}} = \D_{e_i}g_t(x).
	\end{align}
	This is just a deterministic function, and $(\D_{e_i} g_t)_{z+\cu_m}$ is its local average by the definition in \eqref{eq.DgLocalAvg}. For this reason, \eqref{eq.defTwoScaleLinear} corrects the term in $\HH_1$, and we consider $\GG_t$ as the two-scale expansion of the linear statistic. $\GG_t$ does not correct the other terms  in $\oplus_{n \geq 2} \HH_n$, but it is already a good approximation of $G_t$, because the projection in higher-order space has a faster decay.

	\smallskip

	We define the Sobolev semi-norm to simplify the notation
	\begin{align*}
		\norm{F}^2_{\dH^k} := \bracket{F (-\Lb)^k F}_\rho.
	\end{align*}
	In particular, when $k = 0$, this is just $L^2$-norm.
	We should also keep in mind, this semi-norm is associated to the specific Dirichlet form $\Lb$. The following lemma gathers several useful estimates. Their proofs are elementary and can be found in Appendix~\ref{sec.Sobolev}.
	\begin{lemma}\label{lem.elementary}
		For $\bar{G}_t = \Pb_t G$ and $0 < \tau < t < \infty$, we have the estimates 
		\begin{align}\label{eq.GtHkDecay}
			\forall k \in \N, \qquad \norm{\bar{G}_t}^2_{\dH^{k}} + 2 \int_{\tau}^t \norm{\bar{G}_s}^2_{\dH^{k+1}} \, \d s = \norm{\bar{G}_\tau}^2_{\dH^{k}},
		\end{align}
		and 
		\begin{equation}\label{eq.GtHkDecayd}
			\forall k \in \N, \qquad \norm{\bar{G}_t}_{\dH^{k+1}}\leq\frac{1}{(t-\tau)^{\frac{1}{2}}}\norm{\bar{G}_\tau}_{\dH^{k}}.
		\end{equation}
		\begin{align}
			\sum_{i=1}^d\chi(\rho) \norm{\D_{e_i} g_t}^2_{\ell^2(\Zd)} &\leq 16 \norm{\Pi_{1} \bar{G}_t}^2_{\dH^{1}},  \label{eq.GgH1} \\
			\sum_{i,j=1}^d\chi(\rho) \norm{\D_{e_i} \D_{e_j} g_t}^2_{\ell^2(\Zd)} &\leq 64 \norm{\Pi_{1} \bar{G}_t}^2_{\dH^{2}}.  \label{eq.GgH2}
		\end{align}
	\end{lemma}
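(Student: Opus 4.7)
The lemma collects four elementary but distinct estimates about the semigroup $\Pb_t$: an energy identity \eqref{eq.GtHkDecay}, a regularization estimate \eqref{eq.GtHkDecayd}, and two discrete Sobolev bounds \eqref{eq.GgH1}--\eqref{eq.GgH2} comparing finite differences of the coefficient $g_t$ to the $\dH^1$- and $\dH^2$-norms of $\Pi_1 \bar{G}_t$. My plan is to reduce everything to two inputs: first, that $\Lb$ is self-adjoint with semigroup $\Pb_t = e^{t\Lb}$, so spectral identities follow simply by differentiating in time; second, that $\Lb$ preserves $\HH_1$ and acts there as $\tfrac{1}{2}\Delta_Q$ by Lemma~\ref{lem.eqKolmogrov}, which together with the lower bound $Q_{e_i}\geq 1/4$ from Lemma~\ref{lem.CovRW} converts all statements about $\Pi_1\bar{G}_t$ into explicit inequalities for a discrete heat equation on $\Zd$.

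For \eqref{eq.GtHkDecay}, I would differentiate $t\mapsto \norm{\bar{G}_t}^2_{\dH^k} = \bracket{\bar{G}_t, (-\Lb)^k \bar{G}_t}_\rho$: using $\partial_t \bar{G}_t = \Lb \bar{G}_t$, the commutation of $\Lb$ with $(-\Lb)^k$, and the self-adjointness of $\Lb$, one gets $\partial_t \norm{\bar{G}_t}^2_{\dH^k} = -2\norm{\bar{G}_t}^2_{\dH^{k+1}}$, and integrating from $\tau$ to $t$ yields \eqref{eq.GtHkDecay}. Estimate \eqref{eq.GtHkDecayd} is then immediate: applying \eqref{eq.GtHkDecay} at level $k+1$ shows that $s\mapsto \norm{\bar{G}_s}^2_{\dH^{k+1}}$ is non-increasing, whence
\begin{equation*}
(t-\tau)\norm{\bar{G}_t}^2_{\dH^{k+1}} \leq \int_\tau^t \norm{\bar{G}_s}^2_{\dH^{k+1}}\,\d s \leq \tfrac{1}{2}\norm{\bar{G}_\tau}^2_{\dH^k},
\end{equation*}
giving the stated bound (even with a better constant).

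For \eqref{eq.GgH1}, Lemma~\ref{lem.eqKolmogrov} gives $-\Lb \Pi_1 \bar{G}_t = I_1(-\tfrac{1}{2}\Delta_Q g_t)$, and the isometry \eqref{eq.ItoIso} turns $\norm{\Pi_1 \bar{G}_t}^2_{\dH^1}$ into $\chi(\rho) \bracket{g_t, -\tfrac{1}{2}\Delta_Q g_t}_{\ell^2(\Zd)}$. A discrete integration by parts, using the symmetry $Q_y = Q_{-y}$, rewrites this as the Dirichlet form $\tfrac{\chi(\rho)}{2} \sum_{x,y} Q_{y-x}(g_t(x)-g_t(y))^2$; restricting the sum to $y = x \pm e_i$ and invoking $Q_{\pm e_i}\geq 1/4$ closes \eqref{eq.GgH1}. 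For \eqref{eq.GgH2}, self-adjointness gives $\norm{\Pi_1 \bar{G}_t}^2_{\dH^2} = \norm{\Lb \Pi_1 \bar{G}_t}^2_{L^2} = \chi(\rho) \norm{\tfrac{1}{2}\Delta_Q g_t}^2_{\ell^2(\Zd)}$, so it remains to bound $\sum_{i,j} \norm{\D_{e_i}\D_{e_j} g_t}^2_{\ell^2(\Zd)}$ by a constant multiple of $\norm{\tfrac{1}{2}\Delta_Q g_t}^2_{\ell^2(\Zd)}$. This is a discrete elliptic regularity statement, and the cleanest route is Fourier analysis on $[0,2\pi)^d$: one has $|e^{ik_j}-1|^2 = 4\sin^2(k_j/2)$ while the symbol of $\tfrac{1}{2}\Delta_Q$ is $-2\sum_y Q_y \sin^2(y\cdot k/2)$, so the claim reduces to the pointwise inequality $\bigl(\sum_i \sin^2(k_i/2)\bigr)^2 \leq C\bigl(\sum_y Q_y\sin^2(y\cdot k/2)\bigr)^2$, which follows once we keep only $y = \pm e_i$ and use $Q_{\pm e_i}\geq 1/4$. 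I do not anticipate a substantial obstacle; the only mildly delicate step is the Fourier-symbol comparison for \eqref{eq.GgH2}, and the nearest-neighbor ellipticity of $Q$ supplied by Lemma~\ref{lem.CovRW} makes even that verification immediate.
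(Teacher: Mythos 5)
Your proposal is correct, and for the first three estimates it coincides with the paper's argument: \eqref{eq.GtHkDecay} by differentiating $\bracket{\bar G_t(-\Lb)^k\bar G_t}_\rho$ and integrating, \eqref{eq.GtHkDecayd} from the monotonicity of $s\mapsto\norm{\bar G_s}^2_{\dH^{k+1}}$ together with the integral identity, and \eqref{eq.GgH1} by writing the $\dH^1$-norm of $\Pi_1\bar G_t$ as the Dirichlet form of the linear statistic, dropping all jumps except $y=x\pm e_i$, and using $Q_{e_i}\geq\frac14$ from Lemma~\ref{lem.CovRW}. The only genuine difference is in \eqref{eq.GgH2}. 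After the common reduction $\norm{\Pi_1\bar G_t}^2_{\dH^2}=\chi(\rho)\norm{\tfrac12\Delta_Q g_t}^2_{\ell^2(\Zd)}$ (via Lemma~\ref{lem.eqKolmogrov} and the isometry \eqref{eq.ItoIso}), you pass to Fourier multipliers on $[0,2\pi)^d$ and compare the symbol $2\sum_y Q_y\sin^2(y\cdot k/2)$ with $\sum_i\sin^2(k_i/2)$, whereas the paper stays in physical space: it writes $\tfrac12\Delta_Q g_t$ in terms of the commuting translation-invariant operators $\D_h^*\D_h$ and expands the square into $\frac{\chi(\rho)}{4}\sum_{h,h'}Q_hQ_{h'}\norm{\D_{h'}\D_h g_t}^2_{\ell^2(\Zd)}$, a sum of nonnegative terms from which one keeps only $h,h'\in\{e_i\}$. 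Both arguments rest on the same two facts (nonnegativity of the off-axis contributions and $Q_{\pm e_i}\geq\frac14$, which also gives $Q_{-e_i}=Q_{e_i}$ by symmetry) and yield the stated constant; your Fourier route needs only the finite range and $\ell^2$-integrability of $g_t$, both available here, while the paper's expansion avoids Fourier analysis altogether and makes the positivity of the cross terms explicit. Either way the lemma is established.
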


	Our first result shows that,  the two-scale expansion $\tilde G_t^{(1)}$ approximates $\bar G_t$ in $L^2$.
	\begin{lemma}\label{lem.L2linearStats}
		There exists a finite positive constant $C(d, \lambda)$ such that two-scale expansion for linear statistic satisfies
		\begin{equation}\label{eq.L2LinearStats}
			\norm{\tilde G_t^{(1)} - \bar G_t}^2_{L^2}\leq C\Ll(3^{2m}\norm{\Pi_1\bar{G}_t}^2_{\dH^1}+\norm{\Pi_{\geq 2}\bar{G}_t}^2_{L^2}\Rr).
		\end{equation}
	\end{lemma}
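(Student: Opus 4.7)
The plan is to bound each chaos piece of $\GG_t - \bar G_t$ separately. Using the chaos decomposition $\bar G_t = \Pi_0\bar G_t + \Pi_1\bar G_t + \Pi_{\geq 2}\bar G_t$ and the definition of $\GG_t$, we have
\begin{equation*}
\GG_t - \bar G_t = -\Pi_0\bar G_t - \Pi_{\geq 2}\bar G_t + S_m, \qquad S_m := \sum_{z\in\Z_m}\sum_{i=1}^d (\D_{e_i}g_t)_{z+\cu_m}\,\phi^z_{m,e_i}.
\end{equation*}
Replacing $G$ by $G - \bracket{G}_\rho$ kills $\Pi_0\bar G_t$ without altering either side of \eqref{eq.L2LinearStats}, so we may assume $\bracket{G}_\rho=0$. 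By the triangle inequality the task reduces to proving $\norm{S_m}_{L^2}^2 \leq C\,3^{2m}\norm{\Pi_1\bar G_t}^2_{\dH^1}$.

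The core of the argument is an $L^2$ expansion of $S_m$ that exploits the fact that $\{z+\cu_m\}_{z\in\Z_m}$ is a partition of $\Zd$. By Proposition~\ref{prop.phiz}(1)--(2), each $\phi^z_{m,e_i}$ is $\fil_{z+\cu_m^-}$-measurable and centered under $\Pr$, so for $z\neq z'$ the correctors lie in independent sigma-algebras and both have mean zero, giving $\bracket{\phi^z_{m,e_i}\phi^{z'}_{m,e_j}}_\rho=0$. Only the diagonal $z=z'$ survives after squaring. A Cauchy--Schwarz in $(i,j)$, combined with the uniform moment bound \eqref{eq.L2phiz}, namely $\bracket{(\phi^z_{m,e_i})^2}_\rho \leq 16\lambda\chi(\rho)3^{(d+2)m}$, and Jensen's estimate $|(\D_{e_i}g_t)_{z+\cu_m}|^2 \leq 3^{-dm}\sum_{x\in z+\cu_m}|\D_{e_i}g_t(x)|^2$, yields upon summing over $z$ and reassembling $\Zd$ by tiling
\begin{equation*}
\norm{S_m}_{L^2}^2 \leq C\,3^{2m}\,\chi(\rho)\sum_{i=1}^d\norm{\D_{e_i}g_t}^2_{\ell^2(\Zd)}.
\end{equation*}
The Sobolev-type bound \eqref{eq.GgH1} from Lemma~\ref{lem.elementary} then turns the right-hand side into $C\,3^{2m}\norm{\Pi_1\bar G_t}^2_{\dH^1}$, and collecting this with the trivial bound on $\norm{\Pi_{\geq 2}\bar G_t}_{L^2}^2$ produces \eqref{eq.L2LinearStats}.

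The delicate point to watch is the scale accounting: the corrector moment $3^{(d+2)m}$ from \eqref{eq.L2phiz} is large, but it pairs with the volume-normalised local average to recover $3^{-dm}$, and the cross-tile independence prevents picking up an additional factor of the number of cubes touching a given point. If $\phi^z_{m,e_i}$ were defined on overlapping blocks rather than the triadic partition indexed by $\Z_m=3^m\Zd$, the $z\neq z'$ terms would not vanish and the final scale would degrade beyond $3^{2m}$. Once this orthogonality is in place, the remaining manipulations are routine Cauchy--Schwarz and Jensen bookkeeping.
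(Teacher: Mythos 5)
Your proposal is correct and follows essentially the same route as the paper: you peel off $\Pi_{\geq 2}\bar G_t$, reduce to bounding the corrector sum $S_m$, use the independence and centering of $\{\phi^z_{m,e_i}\}_{z\in\Z_m}$ from Proposition~\ref{prop.phiz} to kill the cross terms $z\neq z'$, then combine the $L^2$ moment bound \eqref{eq.L2phiz} with Jensen and \eqref{eq.GgH1}. Your side remark about $\Pi_0\bar G_t$ is in fact a fair observation --- the lemma as stated requires the convention that $G$ is centered (which holds in the application, where $F\in\oplus_{k\geq 1}\tilde\HH_k$), and the paper's own first inequality silently drops that term.
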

	\begin{proof}
		Let us  first calculate $\norm{\tilde G_t^{(1)} - \bar G_t}^2_{L^2}$:
		\begin{equation}\label{eq.L2LinearStats_Step1}
			\begin{split}
				\norm{\tilde G_t^{(1)} - \bar G_t}^2_{L^2}
				&\leq 2\norm{\tilde G_t^{(1)}-\Pi_1\bar{G_t}}^2_{L^2}+2\norm{\Pi_{\geq 2}\bar{G}_t}^2_{L^2}\\
				&\leq 2d \sum_{i=1}^{d}\norm{\sum_{z\in\Z_m}(\D_{e_i}g_t)_{z+\cu_m}\phi^z_{m,e_i}}^2_{L^2}+2\norm{\Pi_{\geq 2}\bar{G}_t}^2_{L^2}.
			\end{split}
		\end{equation}
		Then we focus on one term $\norm{\sum_{z\in\Z_m}(\D_{e_i}g_t)_{z+\cu_m}\phi^z_{m,e_i}}^2_{L^2}$	
		\begin{equation}\label{eq.L2LinearStats_Step2}
			\begin{split}
				&\norm{\sum_{z\in\Z_m}(\D_{e_i}g_t)_{z+\cu_m}\phi^z_{m,e_i}}^2_{L^2}\\
				&=\sum_{z,z'\in\Z_m}(\D_{e_i}g_t)_{z+\cu_m} (\D_{e_i}g_t)_{z'+\cu_m}\bracket{\phi^z_{m,e_i}\phi^{z'}_{m,e_i}}_\rho\\
				&=\sum_{z\in\Z_m}(\D_{e_i}g_t)^2_{z+\cu_m}\norm{\phi^z_{m,e_i}}^2_{L^2}.
			\end{split}
		\end{equation}
		In the second line, we use the fact that $(\D_{e_i}g_t)_{z+\cu_m}$ is a deterministic constant. 
		In the passage from the second line to the third line, the only contribution comes from the term $z=z'$, thanks to the independence between the local correctors $\{\phi^z_{m,e_i}\}_{z \in \Z_m}$ and the property $\bracket{\phi^z_{m,e_i}}_\rho = 0$ (see Proposition~\ref{prop.phiz}). 
		
		Then we insert the estimates \eqref{eq.L2phiz}
		\begin{equation}\label{eq.L2LinearStats_Step3}
			\begin{split}
				&\sum_{z\in\Z_m}(\D_{e_i}g_t)^2_{z+\cu_m}\norm{\phi^z_{m,e_i}}^2_{L^2}\\
				&\leq C \chi(\rho)3^{(d+2)m}\sum_{z\in\Z_m}\frac{1}{|\cu_m|}\sum_{x\in z+\cu_m} \vert \D_{e_i}g_t(x) \vert^2\\
				&\leq C 3^{2m}\norm{\Pi_1\bar{G}_t}^2_{\dH^1}.
			\end{split}
		\end{equation}
		The second line utilizes Jensen's inequality, and the third line uses \eqref{eq.GgH1}. Putting \eqref{eq.L2LinearStats_Step2} and \eqref{eq.L2LinearStats_Step3} back to \eqref{eq.L2LinearStats_Step1}, we obtain the desired result.
	\end{proof}
	
	The next result proves that $\tilde{G}_t^{(1)}$ is close to $G_t$ in $L^2$.
	
	\begin{proposition}\label{prop.TwoScaleLinear}
		There exists a constant $C(d, \lambda, \r,\rho) < +\infty$ such that the two-scale expansion in \eqref{eq.defTwoScaleLinear} satisfies 
		\begin{multline}\label{eq.TwoScaleLinearBound}
			\norm{\GG_t - G_t}_{L^2} + \Ll(\int_0^t \norm{\GG_s - G_s}^2_{\dH^1}  \, \d s \Rr)^{\frac{1}{2}}\\
			\leq C \Ll(\Ll(3^{-\alpha m} + 3^m t^{-\frac{5}{8}}\Rr)\norm{\Pi_1 G}_{L^2} + t^{-\frac{1}{8}}\norm{G}_{L^2} + 3^m \norm{\Pi_1 G}_{\dH^1} + 3^m t^{\frac{5}{8}}\norm{\Pi_1 G}_{\dH^2} + \norm{\Pi_{\geq 2} G}_{L^2} \Rr).
		\end{multline}
	\end{proposition}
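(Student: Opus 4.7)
The plan is to establish \eqref{eq.TwoScaleLinearBound} via an energy estimate for the error $R_t := \GG_t - G_t$. Since $(\partial_t - \L)G_t = 0$, the error satisfies $(\partial_t - \L)R_t = (\partial_t - \L)\GG_t$. Testing against $R_t$ under $\bracket{\cdot}_\rho$ and invoking the uniform ellipticity bound $\bracket{R_t(-\L)R_t}_\rho \geq C_{\ref{cor.CovSEP}}^{-1}\norm{R_t}^2_{\dH^1}$ that follows from \eqref{eq.EnergyLbarUpper} gives
\begin{equation*}
\frac{1}{2}\frac{d}{dt}\norm{R_t}^2_{L^2} + C_{\ref{cor.CovSEP}}^{-1}\norm{R_t}^2_{\dH^1} \leq \bracket{R_t\cdot(\partial_t - \L)\GG_t}_\rho.
\end{equation*}
If the right-hand side can be represented as a pairing with $\pi_b R_t$ and bounded by a small prefactor times $\norm{R_t}_{\dH^1}$ plus pure source terms, Young's inequality absorbs $\tfrac{1}{2}\norm{R_t}^2_{\dH^1}$ on the left and integration over $[0,t]$ yields the combined $L^2$-plus-integrated-$\dH^1$ estimate. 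The initial contribution $\norm{R_0}_{L^2} = \norm{\GG_0-G}_{L^2}$ is bounded directly by Lemma~\ref{lem.L2linearStats}, producing precisely the $3^m\norm{\Pi_1 G}_{\dH^1}+\norm{\Pi_{\geq 2}G}_{L^2}$ contribution in \eqref{eq.TwoScaleLinearBound}.

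To expand the source, I would use $\partial_t\Pi_1\bar G_t = \Lb\Pi_1\bar G_t$ (as $\Pi_1$ commutes with $\Lb$ by Lemma~\ref{lem.eqKolmogrov}) and $\partial_t g_t = \tfrac{1}{2}\Delta_Q g_t$ to write
\begin{equation*}
(\partial_t - \L)\GG_t = \Lb\Pi_1\bar G_t - \L\GG_t + \sum_{z\in\Z_m}\sum_{i=1}^d\tfrac{1}{2}(\D_{e_i}\Delta_Q g_t)_{z+\cu_m}\phi^z_{m,e_i},
\end{equation*}
then regroup the first two terms bond-by-bond inside each cube $z+\cu_m$. Using the identity $\pi_{x,x+e_i}\Pi_1\bar G_t = (\D_{e_i}g_t)(x)\pi_{x,x+e_i}\ell_{e_i}$ together with the very definition $\g^z_{m,e_i,b}=c_b\pi_b(\ell_{e_i}+\phi^z_{m,e_i})-\pi_b\ell_{\DD(\rho)e_i}$ of the centered flux, the leading contribution on bonds $b\in\ov{(z+\cu_m)^*}$ takes the form $(\D_{e_i}g_t)_{z+\cu_m}\g^z_{m,e_i,b}$. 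The remainder splits into (a) a commutator carrying the difference $(\D_{e_i}g_t)(x)-(\D_{e_i}g_t)_{z+\cu_m}$ within each cube, (b) a time-derivative piece $\tfrac{1}{2}(\D_{e_i}\Delta_Q g_t)_{z+\cu_m}\phi^z_{m,e_i}$, and (c) interface bonds joining two adjacent cubes.

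After pairing with $\pi_b R_t$ and summing, Proposition~\ref{prop.phiz}(4) bounds the leading piece by $C\,3^{-\alpha m}\norm{\Pi_1\bar G_t}_{\dH^1}\norm{R_t}_{\dH^1}$, where \eqref{eq.GgH1} controls the $\ell^2$-sum of the coefficients and \eqref{eq.EnergyLbarLower} controls the bond sum of $(\pi_b R_t)^2$. For (a), a discrete Poincaré inequality inside each cube turns $(\D_{e_i}g_t)(x)-(\D_{e_i}g_t)_{z+\cu_m}$ into a second finite difference $\D_{e_j}\D_{e_i}g_t$, so that \eqref{eq.GgH2} and \eqref{eq.L2phiz} together give a bound of the form $C\,3^m\norm{\Pi_1\bar G_t}_{\dH^2}\norm{R_t}_{\dH^1}$; term (b) is treated analogously after identifying $\norm{\D_{e_i}\Delta_Q g_t}_{\ell^2(\Zd)}\lesssim\norm{\Pi_1\bar G_t}_{\dH^2}$ via Lemma~\ref{lem.eqKolmogrov}. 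Applying Young's inequality, integrating in $s\in[0,t]$, and using the Sobolev identity \eqref{eq.GtHkDecay} to absorb $\int_0^t\norm{\Pi_1\bar G_s}^2_{\dH^1}\,ds \leq\tfrac{1}{2}\norm{\Pi_1 G}^2_{L^2}$, I expect to arrive at
\begin{equation*}
\norm{R_t}^2_{L^2} + \int_0^t\norm{R_s}^2_{\dH^1}\,ds \lesssim \norm{R_0}^2_{L^2} + 3^{-2\alpha m}\norm{\Pi_1 G}^2_{L^2} + 3^{2m}\int_0^t\norm{\Pi_1\bar G_s}^2_{\dH^2}\,ds + \int_0^t\norm{\Pi_{\geq 2}\bar G_s}^2_{L^2}\,ds.
\end{equation*}
The precise $t$-exponents $t^{\pm 5/8}$ and $t^{-1/8}$ in \eqref{eq.TwoScaleLinearBound} then come from splitting the last two integrals at a mesoscopic time $s_0\simeq t^{1/4}$ and alternating between the trivial contraction bound and the smoothing estimate \eqref{eq.GtHkDecayd}, applied once to $\Pi_{\geq 2}\bar G_s$ and twice to $\Pi_1\bar G_s$.

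The principal obstacle is the interface contribution (c): a bond $b=\{x,y\}$ with $x\in z+\cu_m$ and $y\in z'+\cu_m$ carries two distinct coefficients $(\D_{e_i}g_t)_{z+\cu_m},(\D_{e_i}g_t)_{z'+\cu_m}$, and the correctors $\phi^z_{m,e_i},\phi^{z'}_{m,e_i}$ live on disjoint cubes so they cannot cancel bond-by-bond. The plan is to bound these mesoscopic interface terms either by reapplying the flux cancellation of Proposition~\ref{prop.phiz}(4) on an enlarged cube $z+\cu_{m+1}$ that contains both cells, restoring the cancellation at a slightly coarser scale, or by moving-particle manipulations of the type \eqref{eq.Movingparticle}; in either case their contribution should be a factor $O(3^{-m})$ smaller than the bulk and thus absorbable into the two main error terms already identified. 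Getting this interface bookkeeping right, together with the correct balance between the $3^{-\alpha m}$ and $3^{m}$ prefactors under the short-time/long-time interpolation producing the exponents in \eqref{eq.TwoScaleLinearBound}, is the most technical part of the argument.
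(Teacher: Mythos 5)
Your overall skeleton (energy estimate for $R_t=\GG_t-G_t$, the same splitting of the source into a flux part $-\L\GG_t+\Lb\Pi_1\bar G_t$ and a time-derivative part $\sum_{z,i}\tfrac12(\D_{e_i}\Delta_Q g_t)_{z+\cu_m}\phi^z_{m,e_i}$, the centered-flux insertion plus Poincar\'e for the slope commutator, and Lemma~\ref{lem.L2linearStats} for the initial data) coincides with the paper's proof. But there is a genuine gap in how you close the estimate. Your piece (b) is not in bond-gradient form, so the pairing $\bracket{R_s\,\partial_s(\GG_s-\Pi_1\bar G_s)}_\rho$ cannot be ``treated analogously'' to (a), i.e.\ bounded by $\norm{R_s}_{\dH^1}$ times a source: it forces you to carry $\norm{R_s}_{L^2}$, which cannot be absorbed into the dissipation since there is no spectral gap in infinite volume. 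This is exactly why the paper uses a Young inequality with the terminal-time weights $t^{\mp 5/4}$, bounding $\norm{R_s}^2_{L^2}\lesssim\norm{\GG_s-\bar G_s}^2_{L^2}+\norm{\bar G_s}^2_{L^2}+\norm{G_s}^2_{L^2}$ crudely; this weighted step is the sole origin of the terms $t^{-\frac18}\norm{G}_{L^2}$, $3^mt^{-\frac58}\norm{\Pi_1 G}_{L^2}$ and $3^mt^{\frac58}\norm{\Pi_1 G}_{\dH^2}$ in \eqref{eq.TwoScaleLinearBound}. Your substitute — an unweighted intermediate bound containing $\int_0^t\norm{\Pi_{\geq2}\bar G_s}^2_{L^2}\,\d s$ and $3^{2m}\int_0^t\norm{\Pi_1\bar G_s}^2_{\dH^2}\,\d s$, with the exponents recovered by splitting at $s_0\simeq t^{1/4}$ and using \eqref{eq.GtHkDecayd} — does not work: \eqref{eq.GtHkDecayd} is a smoothing estimate ($\dH^{k}\to\dH^{k+1}$), not an $L^2$-decay estimate, so $\int_0^t\norm{\Pi_{\geq2}\bar G_s}^2_{L^2}\,\d s$ can only be bounded by $t\,\norm{\Pi_{\geq2}G}^2_{L^2}$, which grows in $t$ and is not dominated by the right-hand side of \eqref{eq.TwoScaleLinearBound}. (The second integral is harmless: \eqref{eq.GtHkDecay} with $k=1$ gives $\int_0^t\norm{\Pi_1\bar G_s}^2_{\dH^2}\,\d s\le\tfrac12\norm{\Pi_1G}^2_{\dH^1}$, no splitting needed.)

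Two smaller points. First, your ``principal obstacle'' (c) is not an obstacle: the correctors $\phi^z_{m,e_i}$ are $\fil_{z+\cu_m^-}$-measurable and the enlarged bond sets $\ov{(z+\cu_m)^*}$ partition $(\Zd)^*$, so for each bond only the corrector of its own cube survives ($\pi_b\phi^{z'}_{m,e_i}=0$ for $z'\neq z$), and the flux cancellation \eqref{eq.FluxCancel} is already stated over the full set $\ov{(z+\cu_m)^*}$, interface bonds included; no enlarged-cube or moving-particle patch is needed. Second, you omit a remainder that is genuinely there: after inserting $\g^z_{m,e_i,b}$, the residual comparison between $\sum_{z,b,i}(\D_{e_i}g_s)_{z+\cu_m}\bracket{(\pi_bV)\,\pi_b\ell_{\DD(\rho)e_i}}_\rho$ and $\bracket{V(-\Lb\Pi_1\bar G_s)}_\rho$ does not vanish, because $\Lb$ (constructed in Lemma~\ref{lem.CovRW}) has jumps of range larger than one; this is the term $\mathbf{F.3}$ in the paper, handled via canonical paths, the antisymmetric operator $\api$ and the chain rule \eqref{eq.chain}, and it contributes another $C\,3^m\norm{V}_{\dH^1}\norm{\Pi_1\bar G_s}_{\dH^2}$ error that your accounting must include.
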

	\begin{proof}		
		Using the identity
		\begin{align*}
			\Ll(\partial_s - \L \Rr) G_s = \Ll(\partial_s - \Lb \Rr) \Pi_1\bar G_s = 0,
		\end{align*}
		we deduce that
		\begin{align*}
			\Ll(\partial_s - \L \Rr)(\tilde G_s^{(1)} - G_s) 
			&= \Ll(\partial_s - \L\Rr)\tilde G_s^{(1)} - \Ll(\partial_s - \Lb \Rr) \Pi_1\bar G_s \\
			&= \partial_s(\tilde G_s^{(1)} -  \Pi_1\bar G_s) + (- \L \tilde G_s^{(1)} +  \Lb \Pi_1\bar G_s) .
		\end{align*}
		Then we test the two sides of equation above with $(\tilde G_s^{(1)} - G_s)$, and obtain that 
		\begin{equation}\label{eq.TwoScaleLinearBound_Decom}
			\begin{split}
				& \frac{1}{2}\bracket{(\tilde G_t^{(1)} - G_t)^2}_{\rho} + \int_0^t \bracket{(\tilde G_s^{(1)} - G_s)(-\L)(\tilde G_s^{(1)} - G_s)}_{\rho}  \, \d s \\
				& \qquad \leq \frac{1}{2}\bracket{(\tilde G_0^{(1)} - G_0)^2}_{\rho} +  \int_0^t \bracket{ (\tilde G_s^{(1)} - G_s)\partial_s(\tilde G_s^{(1)} -  \Pi_1\bar G_s)}_{\rho} \, \d s  \\
				& \qquad \qquad + \int_0^t \bracket{(\tilde G_s^{(1)} - G_s)(-\L\tilde G_s^{(1)} + \Lb \Pi_1\bar G_s)}_{\rho}  \, \d s. 
			\end{split}
		\end{equation}
		Using \eqref{eq.EnergyLbarUpper} in Corollary~\ref{cor.CovSEP} we have
		\begin{equation}\label{eq.CompensateH1}
			\int_0^t \bracket{(\tilde G_s^{(1)} - G_s)(-\L)(\tilde G_s^{(1)} - G_s)}_{\rho}\, \d s\geq \frac{1}{C_{\ref{cor.CovSEP}}}\int_0^t\norm{\tilde G_s^{(1)} - G_s}_{\dH^1}^2\, \d s, 
		\end{equation}
		We then treat the right-hand side term by term.
		
		\medskip
		
		\textit{Step~1: term $\bracket{(\tilde G_0^{(1)} - G_0)^2}_{\rho}$.} We use the result \eqref{eq.L2LinearStats} to obtain 
		\begin{align}\label{5.2}
			\norm{\tilde G_0^{(1)} - G_0}^2_{L^2} = \norm{\tilde G_0^{(1)} - \bar G_0}^2_{L^2} \leq C \Ll(3^{2m}\norm{\Pi_1 G}^2_{\dH^1} + \norm{\Pi_{\geq 2}G}^2_{L^2}\Rr).
		\end{align}
		
		\medskip
		
		\textit{Step~2: term $ \int_0^t \bracket{ (\tilde G_s^{(1)} - G_s)\partial_s(\tilde G_s^{(1)} -  \Pi_1\bar G_s)}_{\rho} \, \d s $.} We have the following estimate 
		\begin{align*}
			&\Ll\vert \int_0^t  \bracket{ (\tilde G_s^{(1)} - G_s)\partial_s(\tilde G_s^{(1)} - \Pi_1\bar G_s)}_{\rho} \, \d s  \Rr\vert \\
			&\leq \int_0^t\Ll(  \frac{t^{-\frac{5}{4}}}{2}\bracket{ (\tilde G_s^{(1)} - G_s)^2}_{\rho} + \frac{t^{\frac{5}{4}}}{2} \bracket{\Ll(\partial_s(\tilde G_s^{(1)} - \Pi_1 \bar G_s)\Rr)^2}_{\rho}  \Rr) \, \d s\\
			&\leq 3 \int_0^t\Ll(  \frac{t^{-\frac{5}{4}}}{2} \Ll(\bracket{ (\tilde G_s^{(1)} - \bar{G}_s)^2 + ( \bar{G}_s)^2 + ( G_s)^2}_{\rho} \Rr) + \frac{t^{\frac{5}{4}}}{2} \bracket{\Ll(\partial_s(\tilde G_s^{(1)} - \Pi_1 \bar G_s)\Rr)^2}_{\rho} \Rr) \, \d s.
		\end{align*}
		Using the $L^2$ decay, we have 
		\begin{align*}
			\int_0^t  \frac{t^{-\frac{5}{4}}}{2} \bracket{( \bar{G}_s)^2 + ( G_s)^2}_{\rho}     \, \d s \leq t^{-\frac{1}{4}} \bracket{(\bar{G}_0)^2}_{\rho}.
		\end{align*}
		We apply the estimate \eqref{eq.L2LinearStats} to the term $ (\tilde G_s^{(1)} - \bar{G}_s)^2$ 
		\begin{align*}
			&\int_0^t \frac{t^{-\frac{5}{4}}}{2} \bracket{ (\tilde G_s^{(1)} - \bar{G}_s)^2}_\rho   \, \d s \\
			&\leq C  \int_0^t \frac{t^{-\frac{5}{4}}}{2} \Ll(3^{2m}\norm{\Pi_1\bar{G}_{s}}^2_{\dH^1} + \norm{\Pi_{\geq 2}\bar G_s}^2_{L^2}\Rr)  \, \d s \\
			&\leq   Ct^{-\frac{5}{4}}3^{2m}\norm{\Pi_1\bar G_0}^2_{L^2} + C t^{-\frac{1}{4}}\norm{\Pi_{\geq 2}\bar G_0}^2_{L^2}.
		\end{align*}
		Here from the second line to the third line, we use the decay of Dirichlet form \eqref{eq.GtHkDecay}.

		Similar estimate also applies to $\Ll(\partial_s(\tilde G_s^{(1)} -  \Pi_1\bar G_s)\Rr)^2$
		\begin{align*}
			&\int_0^t \frac{t^{\frac{5}{4}}}{2} \bracket{\Ll(\partial_s(\tilde G_s^{(1)} - \Pi_1 \bar G_s)\Rr)^2}_{\rho}  \, \d s \\
			&= \int_0^t \frac{t^{\frac{5}{4}}}{2}  \bracket{\Ll( \sum_{i=1}^d\sum_{z \in \Z_m} (\D_{e_i}\partial_s g_s)_{z+\cu_m} \phi^z_{m, e_i}\Rr)^2}_{\rho}  \, \d s\\
			&\leq C  \int_0^t\frac{t^{\frac{5}{4}}}{2}   3^{2m}\norm{\Pi_1\bar G_s}^2_{\dH^3}  \, \d s \\
			&\leq   Ct^{\frac{5}{4}} 3^{2m}\norm{\Pi_1\bar{G}_0}^2_{\dH^2}.
		\end{align*}
		Here in the third line, we use the property $\partial_s \Pi_1\bar{G}_s = \Lb \Pi_1\bar{G}_s$ and the $L^2$-moment estimate \eqref{eq.L2phiz}. From the third line to the forth line, we use the decay of Dirichlet form \eqref{eq.GtHkDecay}.

		We conclude that 
		\begin{multline}\label{5.3}
			\Ll\vert \int_0^t \bracket{ (\tilde G_s^{(1)} - G_s)\partial_s(\tilde G_s^{(1)} - \Pi_1 \bar G_s)}_{\rho} \, \d s \Rr\vert \\
			\leq C\Ll(t^{-\frac{1}{4}}\norm{\bar{G}_0}^2_{L^2}+t^{-\frac{5}{4}}3^{2m}\norm{\Pi_1\bar{G}_0}^2_{L^2}+t^{\frac{5}{4}}3^{2m}\norm{\Pi_1 \bar{G}_0}^2_{\dH^2}\Rr).
		\end{multline}

		\medskip
		
		\textit{Step~3: term $\int_0^t \bracket{(\tilde G_s^{(1)} - G_s)(-\L\tilde G_s^{(1)} + \Lb \Pi_1 \bar G_s)}_{\rho}  \, \d s$.} This term involves the flux replacement, which is a key estimate in homogenization. We address it in Lemma~\ref{lem.TwoScale_Flux} below, and cite the estimate \eqref{eq.weaknormestimate}  there. Then we obtain
		\begin{equation}\label{5.4}
			\begin{aligned}
				&\Ll|\int_0^t \bracket{(\tilde G_s^{(1)} - G_s)(-\L\tilde G_s^{(1)} + \Lb \Pi_1 \bar G_s)}_{\rho}  \, \d s\Rr|\\
				&\qquad \leq C \int_{0}^{t}\norm{\tilde{G}_{s}^{(1)}-G_s}_{\dH^1}\Ll(3^{-\alpha m}\norm{\Pi_1\bar{G}_s}_{\dH^1}+3^m\norm{\Pi_1\bar{G}_s}_{\dH^2}\Rr)\d s\\
				&\qquad\leq \frac{1}{2C_{\ref{cor.CovSEP}}}\int_{0}^{t}\norm{\tilde{G}_s^{(1)}-G_s}^2_{\dH^1}\d s+C\int_{0}^{t}\Ll(3^{-2\alpha m}\norm{\Pi_1\bar{G}_s}^2_{\dH^1}+3^{2m}\norm{\Pi_1\bar{G}_s}^2_{\dH^2}\Rr)\d s\\
				&\qquad\leq \frac{1}{2C_{\ref{cor.CovSEP}}}\int_{0}^{t}\norm{\tilde{G}_s^{(1)}-G_s}^2_{\dH^1}\d s+C 3^{-2\alpha m}\norm{\Pi_1\bar{G}_0}^2_{L^2}+C 3^{2m}\norm{\Pi_1\bar{G}_0}^2_{\dH^1}
			\end{aligned}
		\end{equation}
		
		Finally, we put \eqref{eq.CompensateH1}, \eqref{5.2}, \eqref{5.3}, and \eqref{5.4} back to \eqref{eq.TwoScaleLinearBound_Decom}. The term $\frac{1}{2C_{\ref{cor.CovSEP}}}\int_{0}^{t}\norm{\tilde{G}_s^{(1)}-G_s}^2_{\dH^1}\d s$ above compensates part of \eqref{eq.CompensateH1}, and we obtain the desired result.
		
	\end{proof}

	We treat the technical estimates about the flux replacement in the following lemma.
	\begin{lemma}\label{lem.TwoScale_Flux}
		There exists a finite positive constant $C(d, \lambda, \r,\rho)$ such that the following estimate holds
		\begin{equation}\label{eq.weaknormestimate}
			\Ll|\bracket{V\Ll(-\L\tilde{G}_s^{(1)}+\Lb\Pi_1\bar{G}_s\Rr)}_\rho\Rr|\leq C\norm{V}_{\dH^1}\Ll(3^{-\alpha m}\norm{\Pi_1\bar{G}_s}_{\dH^1}+3^m\norm{\Pi_1\bar{G}_s}_{\dH^2}\Rr).
		\end{equation}
	\end{lemma}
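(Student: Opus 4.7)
The plan is to test the difference $-\L\GG_s+\Lb\Pi_1\bar{G}_s$ against $V$ and, using the reversibility of both $\L$ and $\Lb$ with respect to $\Pr$, rewrite it as
\begin{equation*}
\bracket{V(-\L\GG_s+\Lb\Pi_1\bar{G}_s)}_\rho = \tfrac{1}{2}\sum_{b\in(\Zd)^*}\bracket{c_b(\pi_b V)(\pi_b\GG_s)}_\rho - \bracket{(-\Lb V)\Pi_1\bar{G}_s}_\rho.
\end{equation*}
The aim is to show that these two Dirichlet-type pairings are close: after inserting the corrector expansion of $\GG_s$ on each mesoscopic cube $z+\cu_m$ ($z\in\Z_m$), the first pairing should split into a ``homogenized'' piece that matches the second one, a ``centered flux'' piece that averages to zero via \eqref{eq.FluxCancel}, and slope-smoothing errors controlled by second-order finite differences of $g_s$.

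The cube-by-cube analysis is the core. For a bond $b=\{x,x+e_i\}$ interior to $z+\cu_m$, the definition \eqref{eq.defTwoScaleLinear} gives $\pi_b\GG_s = (\D_{e_i}g_s)(x)\,\pi_b\ell_{e_i} + \sum_{j=1}^d (\D_{e_j}g_s)_{z+\cu_m}\,\pi_b\phi^z_{m,e_j}$. Replacing the pointwise slope $(\D_{e_i}g_s)(x)$ by the cube average $(\D_{e_i}g_s)_{z+\cu_m}$, at the cost of a slope-smoothing remainder involving $\D_{e_j}\D_{e_i}g_s$, and applying the centered-flux identity \eqref{eq.defFlux}, we rewrite
\begin{equation*}
c_b\pi_b\GG_s = \sum_{j=1}^d (\D_{e_j}g_s)_{z+\cu_m}\bigl(\pi_b\ell_{\DD(\rho)e_j}+\g^z_{m,e_j,b}\bigr) + (\text{slope error}).
\end{equation*}
The spatial cancellation \eqref{eq.FluxCancel} applied cube by cube, then Cauchy--Schwarz over $z\in\Z_m$ together with the lower bound \eqref{eq.EnergyLbarLower} and \eqref{eq.GgH1}, controls the $\g^z_{m,e_j,b}$-contribution by $C\,3^{-\alpha m}\norm{V}_{\dH^1}\norm{\Pi_1\bar{G}_s}_{\dH^1}$, which is the first term on the right-hand side of \eqref{eq.weaknormestimate}.

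It remains to identify $\tfrac{1}{2}\sum_{b,j}(\D_{e_j}g_s)_{z(b)+\cu_m}\bracket{(\pi_b V)\,\pi_b\ell_{\DD(\rho)e_j}}_\rho$ with $\bracket{(-\Lb V)\Pi_1\bar{G}_s}_\rho$. Since $\pi_b\ell_{\DD(\rho)e_j}$ is a linear statistic, only the $\HH_1$-component $\Pi_1 V$ survives this pairing, reducing the identification to a deterministic summation-by-parts on $\Zd$; the relation $2\DD(\rho)=\cm(\rho)=\sum_y Q_y y y^\T$ built into Corollary \ref{cor.CovSEP}, combined with the moving-particle rewriting \eqref{eq.Movingparticle} of long-range $Q$-jumps as telescoping nearest-neighbor increments, reconstructs $\Lb\Pi_1\bar{G}_s=I_1(\tfrac{1}{2}\Delta_Q g_s)$ up to second-order finite-difference errors. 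These errors, together with the slope-smoothing remainders above and contributions from boundary bonds between adjacent mesoscopic cubes, are all of $\D^2 g_s$ type and by \eqref{eq.GgH2} contribute at most $C\,3^m\norm{V}_{\dH^1}\norm{\Pi_1\bar{G}_s}_{\dH^2}$, giving the second term in \eqref{eq.weaknormestimate}. The main obstacle is precisely this last identification step: $\L$ is nearest-neighbor with variable rates while $\Lb$ has long-range $Q$-jumps, so matching the two deterministic fluxes requires reorganizing nearest-neighbor bonds into $Q$-jumps across cubes of side length $3^m$, and it is this reorganization that forces the $3^m$ loss in the slope-smoothing estimate.
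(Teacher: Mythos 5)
Your plan follows essentially the same route as the paper's proof: the same splitting of $\bracket{V(-\L\GG_s+\Lb\Pi_1\bar{G}_s)}_\rho$ into a centered-flux part handled cube by cube via \eqref{eq.FluxCancel}, a slope-smoothing part controlled by second differences of $g_s$ through Poincar\'e and \eqref{eq.GgH2}, and a deterministic identification of the homogenized flux $\sum_{i}(\D_{e_i}g_s)_{z+\cu_m}\,\pi_b\ell_{\DD(\rho)e_i}$ with $\Lb\Pi_1\bar{G}_s$ by decomposing long-range $Q$-jumps into nearest-neighbor steps, each error entering as $3^m\norm{V}_{\dH^1}\norm{\Pi_1\bar{G}_s}_{\dH^2}$. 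One correction on the tool for that last step: the inequality \eqref{eq.Movingparticle} you invoke is only a quadratic-form bound and cannot produce the exact cancellation of leading terms; what is needed is the expectation-level chain rule \eqref{eq.chain} for the directed operator $\api$ (the telescoping $\bracket{\api_{x,x+y}V}_\rho=\sum_n\bracket{\api_{x_{n-1},x_n}V}_\rho$ holds only after taking $\bracket{\cdot}_\rho$), combined with shift estimates of the type \eqref{eq.NearBondEqiv}; with that substitution your argument matches the paper's Step 3.
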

	\begin{proof}
		The proof can be divided into 4 steps.
		
		\textit{Step~0: decomposition.} We start from the terms involving $\L$    
		\begin{align*}
			\bracket{V(-\L\tilde G_s^{(1)})}_\rho =  \frac{1}{2}\sum_{z \in \Z_m} \sum_{b \in \ov{(z + \cu_m)^*}} \bracket{c_b (\pi_b V) (\pi_b \tilde G_s^{(1)})}_\rho.
		\end{align*}
		Using the expression of two-scale expansion $\tilde G_s^{(1)}$ in \eqref{eq.defTwoScaleLinear}, we develop $\pi_b \tilde G_s^{(1)}$ for $b \in \ov{(z + \cu_m)^*}$ 
		\begin{equation}\label{eq.piTwoScaleLinear}
			\begin{split}
				\pi_b \tilde G_s^{(1)} &= \pi_b \Pi_1\bar{G}_s +   \sum_{i = 1}^d\sum_{z'\in\Z_m}(\D_{e_i}g_s)_{z'+\cu_m}(\pi_b\phi^{z'}_{m,e_i})\\
				&= \pi_b \Pi_1\bar{G}_s + \sum_{i = 1}^d (\D_{e_i}g_s)_{z+\cu_m}(\pi_b\phi^{z}_{m,e_i}).
			\end{split}
		\end{equation}
		In the first line, because $(\D_{e_i}g_s)_{z'+\cu_m}$ does not depend on the configuration, $\pi_b$ does not act on it. Afterwards, since all the functions $\phi^{z'}_{m,e_i}$ are local (see Proposition~\ref{prop.phiz}), the only non-vanishing term  is $\pi_b\phi^{z}_{m,e_i}$. 
		
		We then aim to make  the centered flux $\g_{m,e_i, b}^z$ (see \eqref{eq.defFlux} for its definition) appear in \eqref{eq.piTwoScaleLinear}, which requires the contribution of $c_b \pi_b \Pi_1\bar{G}_s$. 
		For every $b=\{x,x+e_j\} \in \ov{(z + \cu_m)^*}$, we have 
		\begin{align*}
			\pi_b\Pi_1\bar{G}_s=(g_s(x+e_j)-g_s(x))(\bar{\eta}_x-\bar{\eta}_{x+e_j})=\D_{e_j}g_s(x)(\eta_x-\eta_{x+e_j}).
		\end{align*}
		We notice that 
		\begin{align*}
			\pi_b \ell_{e_i} = e_i \cdot (x+e_j - x) (\eta_x-\eta_{x+e_j}) = \Ind{i=j}  (\eta_x-\eta_{x+e_j}),
		\end{align*}
		and this gives us the identity
		\begin{align*}
			\pi_b\Pi_1\bar{G}_s = \sum_{i=1}^d \D_{e_i}g_s(x)  \pi_b \ell_{e_i}.
		\end{align*} 
		We can further develop this expression by subtracting the local average of slope $ (\D_{e_i}g_s)_{z+\cu_m}$
		\begin{align*}
			\pi_b\Pi_1\bar{G}_s =  \sum_{i=1}^d \Ll(\D_{e_i}g_s(x) -  (\D_{e_i}g_s)_{z+\cu_m}\Rr)  \pi_b \ell_{e_i} +  \sum_{i=1}^d (\D_{e_i}g_s)_{z+\cu_m}  \pi_b \ell_{e_i}. 
		\end{align*}
		We put this formula back to \eqref{eq.piTwoScaleLinear}, and obtain that
		\begin{equation*}
			\pi_b \tilde G_s^{(1)} 	= \sum_{i = 1}^d (\D_{e_i}g_s)_{z+\cu_m}(\pi_b(\ell_{e_i} + \phi^{z}_{m,e_i})) + \sum_{i=1}^d \Ll(\D_{e_i}g_s(x) -  (\D_{e_i}g_s)_{z+\cu_m}\Rr)  \pi_b \ell_{e_i}.
		\end{equation*}
		We further apply $c_b$, which yields
		\begin{align*}
			c_b\pi_b \tilde G_s^{(1)} &	= \sum_{i = 1}^d (\D_{e_i}g_s)_{z+\cu_m}(c_b\pi_b(\ell_{e_i} + \phi^{z}_{m,e_i})) + \sum_{i=1}^d \Ll(\D_{e_i}g_s(x) -  (\D_{e_i}g_s)_{z+\cu_m}\Rr) c_b \pi_b \ell_{e_i} \\
			&= \sum_{i = 1}^d (\D_{e_i}g_s)_{z+\cu_m}\underbrace{\Ll(c_b  \pi_b (\ell_{e_i} + \phi^z_{m, e_i} ) - \pi_b \ell_{\DD(\rho) e_i}\Rr)}_{\g^z_{m,e_i, b}} \\
			& \qquad + \sum_{i=1}^d \Ll(\D_{e_i}g_s(x) -  (\D_{e_i}g_s)_{z+\cu_m}\Rr) c_b \pi_b \ell_{e_i} \\
			& \qquad + \sum_{i = 1}^d (\D_{e_i}g_s)_{z+\cu_m} \pi_b \ell_{\DD(\rho) e_i}. 
		\end{align*}
		Here we make appear $\g^z_{m,e_i, b}$ as desired. Therefore,  we conclude that
		\begin{align}\label{eq.fluxdecomshort}
			\bracket{V(-\L\tilde G_s^{(1)} + \Lb  \Pi_1\bar G_s)}_{\rho}  &= \mathbf{F.1} + \mathbf{F.2} + \mathbf{F.3},
		\end{align}
		where the three terms are
		\begin{equation}\label{eq.fluxdecomlong}
			\begin{split}
				\mathbf{F.1} &:= \frac{1}{2}\sum_{z \in \Z_m} \sum_{i = 1}^d   \sum_{b \in \ov{(z + \cu_m)^*}}(\D_{e_i}g_s)_{z+\cu_m}\bracket{ (\pi_{b} V)  \g^z_{m,e_i, b}}_{\rho},\\
				\mathbf{F.2} &:= \frac{1}{2}\sum_{z \in \Z_m} \sum_{i = 1}^d  \sum_{x \in z+\cu_m} (\D_{e_i}g_s(x)-(\D_{e_i}g_s)_{z+\cu_m}) \bracket{ (\pi_{x,x+e_i} V)  c_{x,x+e_i} (\eta_x - \eta_{x+e_i}) }_\rho, \\
				\mathbf{F.3} &:= \frac{1}{2}\sum_{z \in \Z_m} \sum_{i = 1}^d   \sum_{b \in \ov{(z + \cu_m)^*}} (\D_{e_i}g_s)_{z+\cu_m} \bracket{(\pi_{b} V)  \Ll(\pi_b \ell_{\DD(\rho) e_i}\Rr) }_{\rho} - \bracket{V(-\Lb \Pi_1\bar{G}_s)}_\rho.
			\end{split}
		\end{equation}
		These three terms have their own interpretations. The term $\mathbf{F.1}$ is the main part of the flux replacement. The term $\mathbf{F.2}$ is the error to fix the local slope. The term $\mathbf{F.3}$ is the error for discrete approximation. A similar decomposition of two-scale expansion can be found in the previous work \cite[eq.(4.10)]{gu2024quantitative}.
		
		In the following paragraphs, we treat the three terms separately.
		
		\medskip
		
		\textit{Step~1: term~$\mathbf{F.1}$ as the error in flux replacement.}
		For this term, we make appear the centered flux $\g_{m,e_i,b}^z$. Moreover, as the averaged slope $(\D_{e_i}g_s)_{z+\cu_m}$ does not depend on the configuration in $(z+\cu_m)^+$, we apply the flux cancellation \eqref{eq.FluxCancel} to obtain
		\begin{align*}
			\Ll\vert \bracket{\sum_{b \in \ov{(z + \cu_m)^*}} (\pi_{b} V) \g_{m,e_i,b}^z}_{\rho} \Rr\vert \leq C 3^{-\alpha m} \vert \cu_m \vert^{\frac{1}{2}}\Ll(\sum_{b \in \ov{(z+\cu_m)^*} }\bracket{(\pi_b V)^2}_{\rho}\Rr)^{\frac{1}{2}},
		\end{align*}
		and Jensen's inequality for $(\D_{e_i}g_s)_{z+\cu_m}$ to obtain
		\begin{align*}
			\Ll\vert (\D_{e_i}g_s)_{z+\cu_m} \Rr\vert \leq C \vert \cu_m \vert^{-\frac{1}{2}} \Ll(\sum_{x \in z +\cu_m}\bracket{ (\pi_{x, x+e_i} \Pi_1\bar{G}_s)^2}_{\rho}\Rr)^{\frac{1}{2}}. 
		\end{align*} 
		The volume factor $\vert \cu_m \vert^{\frac{1}{2}}$ compensates in the product of two estimates above. We then apply Jensen's inequality and obtain
		\begin{align}\label{eq.F.1}
			\vert \mathbf{F.1} \vert &\leq C 3^{-\alpha m} \norm{V}_{\dH^1} \norm{\Pi_1\bar{G}_s}_{\dH^1}.
		\end{align}

		\medskip
		
		\textit{Step~2: term~$\mathbf{F.2}$ as the error to fix the slope.}
		We apply at first the Cauchy--Schwarz inequality and obtain
		\begin{align*}
			\vert \bracket{ (\pi_{x,x+e_i} V)  c_{x,x+e_i} (\eta_x - \eta_{x+e_i}) }_\rho \vert \leq \lambda \chi(\rho)^{\frac{1}{2}} \bracket{ (\pi_{x,x+e_i} V)^2}^{\frac{1}{2}}_\rho .
		\end{align*}
		We then insert this estimate in $\mathbf{F.2}$ 
		\begin{align*}
			\vert \mathbf{F.2} \vert & \leq \sum_{z \in \Z_m} \sum_{i=1}^{d}\sum_{x\in z+\cu_m} \Ll\vert \D_{e_i}g_s(x)-(\D_{e_i}g_s)_{z+\cu_m}\Rr\vert  \lambda \chi(\rho)^{\frac{1}{2}} \bracket{ (\pi_{x,x+e_i} V)^2}^{\frac{1}{2}}_\rho \\
			&\leq \lambda \chi(\rho)^{\frac{1}{2}}\Ll(\sum_{b \in (\Zd)^*}\bracket{(\pi_{b}V)^2}_\rho\Rr)^{\frac{1}{2}}\Ll( \sum_{z \in \Z_m}\sum_{i=1}^{d}\sum_{x\in z+\cu_m}\Ll(\D_{e_i}g_s(x)-(\D_{e_i}g_s)_{z+\cu_m}\Rr)^2\Rr)^{\frac{1}{2}}\\
			&\leq 3^m \lambda \chi(\rho)^{\frac{1}{2}} \Ll(\sum_{b \in (\Zd)^*}\bracket{(\pi_{b}V)^2}_\rho\Rr)^{\frac{1}{2}}\Ll( \sum_{z \in \Z_m}\sum_{x\in z+\cu_m}\sum_{i,j=1}^{d}\Ll(\D_{e_j}\D_{e_i}g_s(x)\Rr)^2\Rr)^{\frac{1}{2}}.
		\end{align*}
		The passage from the second line to the third line makes use of Poincar\'e's inequality. Using \eqref{eq.GgH2}, we have 
		\begin{align}\label{eq.F.2}
			\vert \mathbf{F.2} \vert 
			\leq C 3^m \norm{V}_{\dH^1}\norm{\Pi_1\bar{G}_s}_{\dH^2}.
		\end{align}
		
		\medskip
		
		\textit{Step~3: term~$\mathbf{F.3}$ as the error for discrete approximation.} We study the two terms respectively in $\mathbf{F.3}$. As a preparation, we notice the Kawasaki operator $x,y \mapsto \pi_{x,y}$ is symmetric, and we can use $\overrightarrow{\pi}_{x, y}$ below to measure the change when particle moves from $x$ to $y$: for every $F: \X \to \R$, we define
		\begin{align}\label{eq.defapi}
			\api_{x, y}F := (\pi_{x, y} F) (\eta_x - \eta_y).
		\end{align}   
		Then the mapping $x,y \mapsto \api_{x,y}$ is anti-symmetric. 	Moreover, the operator $\api$ satisfies the following chain rule: given $\{x_i\}_{0 \leq i \leq n} \subset \Lambda \subset \Zd$, we have
		\begin{align}\label{eq.chain}
			\bracket{\api_{x_0, x_n} F}_{\rho, \Lambda} = \sum_{i=0}^{n-1} \bracket{\api_{x_i, x_{i+1}} F}_{\rho, \Lambda}.
		\end{align}
		See \cite[Lemma~3.3]{funaki2024quantitative} for its proof.
		
		\textit{Step~3.1: the first term in $\mathbf{F.3}$.} Recall Lemma~\ref{lem.CovRW} and Corollary~\ref{cor.CovSEP} that
		\begin{align*}
			\DD(\rho)=\frac{1}{2}\sum_{y\in\Zd}Q_y yy^{\T}. 
		\end{align*}
		Thus, for every $x\in\Zd$ and $j\in\{1,\cdots,d\}$, we have the identity
		\begin{equation*}
			\pi_{x,x+e_j}\ell_{\DD(\rho)e_i}=e_j^{\T}\DD(\rho)e_i(\eta_x-\eta_{x+e_j})=\frac{1}{2}\sum_{y\in\Zd}Q_y\Ll(y^{\T} e_j\Rr)\Ll(y^{\T} e_i\Rr)(\eta_x-\eta_{x+e_j}).
		\end{equation*}
		Then we can simplify the first term in $\mathbf{F.3}$:
		\begin{multline}\label{eq.FixSlope3}
			\frac{1}{2}\sum_{z \in \Z_m} \sum_{i = 1}^d   \sum_{b \in \ov{(z + \cu_m)^*}}  \bracket{(\pi_{b} V)  \Ll(\pi_b \ell_{\DD(\rho) e_i}\Rr)(\D_{e_i}g_s)_{z+\cu_m} }_{\rho}\\
			=\frac{1}{4}\sum_{y\in\Zd}\sum_{z\in\Z_m}\sum_{x\in z+\cu_m}Q_y\Ll(\sum_{i=1}^{d}(y^{\T} e_i)(\D_{e_i}g_s)_{z+\cu_m}\Rr)\Ll(\sum_{j=1}^{d}(y^{\T} e_j)\bracket{\api_{x,x+e_j}V}_\rho\Rr).
		\end{multline}
		
		\smallskip
		
		\textit{Step~3.2: the second term in $\mathbf{F.3}$.}  For any $x,y\in\Zd$, we first define the canonical path from $x$ to $x+y$. We fix the order of the coordinate $e_1, e_2, \cdots e_d$, then connect $x$ to $(x+y)$ along a geodesic path in $\ell^1$ distance on lattice  $\Zd$
		\begin{align*}
			x\rightarrow x+y&:=x\rightarrow x+\mathrm{sgn}(y^{\T}e_1)e_1\rightarrow\cdots\rightarrow x+(y^{\T}e_1)e_1\rightarrow x+(y^{\T}e_1)e_1+\mathrm{sgn}(y^{\T}e_2)e_2\\
			&\qquad\rightarrow\cdots\rightarrow x+(y^{\T}e_1)e_1+(y^{\T}e_2)e_2\rightarrow\cdots\rightarrow x+y.
		\end{align*}
		To be convenient we denote the vertex at $n$-th step by 
		$x_n$. We consider the terms involving $\Lb$
		\begin{equation}\label{eq.weeknormLbar}
			\bracket{V(-\Lb \Pi_1\bar{G}_s}_\rho=\frac{1}{4}\sum_{x\in\Zd}\sum_{y\in\Zd}Q_{y}\bracket{(\pi_{x,x+y}V)(\pi_{x,x+y}\Pi_1\bar{G}_s)}_\rho.
		\end{equation}
		Using the identity $\pi_{x,x+y}\Pi_1\bar{G}_s= \D_{y}g_s(x)(\eta_x-\eta_{x+y})$ and the definition of $\api$ in \eqref{eq.defapi}, we have
		\begin{align*}
			\bracket{(\pi_{x,x+y}V)(\pi_{x,x+y}\Pi_1\bar{G}_s)}_\rho
			&=\D_{y}g_s(x)\bracket{\api_{x,x+y}V}_\rho\\
			&=\Ll(\sum_{n=1}^{\vert y \vert_{1}}\D_{x_n-x_{n-1}}g_s(x_{n-1})\Rr)\Ll(\sum_{n=1}^{\vert y \vert_{1}}\bracket{\api_{x_{n-1},x_n }V}_\rho\Rr),
		\end{align*}
		Here in the second line, we use the property $\D_{y}g_s(x)=\sum_{n=1}^{\vert y \vert_{1}}\D_{x_n-x_{n-1}}g_s(x_{n-1})$ and the chain rule \eqref{eq.chain} to decompose terms through the canonical paths.
		
		We aim to shift the finite difference and the Kawasaki operators above to $x$. We illustrate the error in this procedure using the following calculation. For every $i,j\in\{1,\cdots,d\}$, $w_1,w_2\in\Zd$ such that $|w_1-w_2|=1$, we have that
		\begin{equation}\label{eq.NearBondEqiv}
			\begin{aligned}
				&\Ll|\sum_{x\in\Zd}\Ll(\D_{e_j}g_s(x)\bracket{\api_{x+w_1,x+w_2}V}_\rho\Rr)-\sum_{x\in\Zd}\Ll(\D_{e_j}g_s(x)\bracket{\api_{x+w_1+e_i,x+w_2+e_i}V}_\rho\Rr)\Rr|\\
				&\quad =\Ll|\sum_{x\in\Zd}\Ll(\D_{e_j}g_s(x)-\D_{e_j}g_s(x-e_i)\Rr)\bracket{\api_{x+w_1,x+w_2}V}_\rho\Rr|\\
				&\quad\leq\Ll(\sum_{x\in\Zd}\Ll(\D_{e_i}\D_{e_j}g_{s}(x)\Rr)^2\Rr)^{\frac{1}{2}}\Ll(\sum_{x\in\Zd}\bracket{\Ll(\api_{x+w_1,x+w_2}V\Rr)^2}_\rho\Rr)^{\frac{1}{2}}\\
				&\quad\leq C\norm{\Pi_1\bar{G}_s}_{\dH^2}\norm{V}_{\dH^1}.
			\end{aligned}
		\end{equation}
		The last line makes use of \eqref{eq.GgH2}. Since the support of $Q$ is a finite, we use \eqref{eq.NearBondEqiv}, $\D_{y}g_s(x)=\D_{-y}g_s(x+y)$ and  $\api$ to approximate \eqref{eq.weeknormLbar} in the sense 
		\begin{align}\label{eq.FixSlope2_Rough}
			\bracket{V(-\Lb \Pi_1\bar{G}_s}_\rho \simeq \frac{1}{4}\sum_{y\in\Zd}\sum_{x\in\Zd}Q_y\Ll(\sum_{i=1}^{d}(y^{\T} e_i)\D_{e_i}g_s(x)\Rr)\Ll(\sum_{j=1}^{d}(y^{\T} e_j)\bracket{\api_{x,x+e_j}V}_\rho\Rr),
		\end{align}
		with an error estimate
		\begin{multline}\label{eq.FixSlope2}
			\Ll|\bracket{V(-\Lb \Pi_1\bar{G}_s}_\rho-\frac{1}{4}\sum_{y\in\Zd}\sum_{x\in\Zd}Q_y\Ll(\sum_{i=1}^{d}(y^{\T} e_i)\D_{e_i}g_s(x)\Rr)\Ll(\sum_{j=1}^{d}(y^{\T} e_j)\bracket{\api_{x,x+e_j}V}_\rho\Rr)\Rr|\\
			\leq C\norm{\Pi_1\bar{G}_s}_{\dH^2}\norm{V}_{\dH^1}.
		\end{multline}
		Here the constant $C$ only depends on $d, \lambda,$ and $\rho$, as indicated in (2) of Lemma~\ref{lem.CovRW}.
		\smallskip
		
		\textit{Step~3.3: comparison between two terms in $\mathbf{F.3}$.} The two terms in $\mathbf{F.3}$ are now simplified in similar expression in \eqref{eq.FixSlope3} and \eqref{eq.FixSlope2_Rough}. 
		We then compare their difference in one cube via the Cauchy--Schwarz inequality and Poincar\'e's inequality
		\begin{align*}
			&\Ll|\sum_{x\in z+\cu_m}\Ll(\sum_{i=1}^{d}(y^{\T} e_i)\Ll(\D_{e_i}g_s(x)-(\D_{e_i}g_s)_{z+\cu_m}\Rr)\Rr)\Ll(\sum_{j=1}^{d}(y^{\T} e_j)\bracket{\api_{x,x+e_j}V}_\rho\Rr)\Rr|^2\\
			&\leq d \Ll(\sum_{x\in z+\cu_m}\sum_{i=1}^{d}(y^{\T} e_i)^2\Ll(\D_{e_i}g_s(x)-(\D_{e_i}g_s)_{z+\cu_m}\Rr)^2\Rr)\Ll(\sum_{x\in z+\cu_m}\sum_{j=1}^{d}(y^{\T} e_j)^2\bracket{\Ll(\api_{x,x+e_j}V\Rr)^2}_{\rho}\Rr)\\
			&\leq C 3^{2m}|y|^4\Ll(\sum_{i=1}^{d}\sum_{x\in z+\cu_m}\sum_{j=1}^{d}\Ll(\D_{e_j}\D_{e_i}g_s(x)\Rr)^2\Rr)\Ll(\sum_{j=1}^{d}\sum_{x\in z+\cu_m}\bracket{\Ll(\api_{x,x+e_j}V\Rr)^2}_\rho\Rr).
		\end{align*}
		Since the support of $Q$ is a finite set, we can use Jensen's inequality to obtain:
		\begin{equation}\label{eq.FixSlope4}
			\begin{aligned}
				&\Ll|\frac{1}{4}\sum_{y\in\Zd}\sum_{z\in\Z_m}\sum_{x\in z+\cu_m}Q_y\Ll(\sum_{i=1}^{d}(y^{\T} e_i)(\D_{e_i}g_s)_{z+\cu_m}\Rr)\Ll(\sum_{j=1}^{d}(y^{\T} e_j)\bracket{\api_{x,x+e_j}V}_\rho\Rr)\Rr.\\
				&\Ll.-\frac{1}{4}\sum_{y\in\Zd}\sum_{x\in\Zd}Q_y\Ll(\sum_{i=1}^{d}(y^{\T} e_i)\D_{e_i}g_s(x)\Rr)\Ll(\sum_{j=1}^{d}(y^{\T} e_j)\bracket{\api_{x,x+e_j}V}_\rho\Rr)\Rr|\\ 
				&\quad \leq C 3^m \norm{V}_{\dH^1}\norm{\Pi_1\bar{G}_s}_{\dH^2}.
			\end{aligned}
		\end{equation}
		Combining \eqref{eq.FixSlope3}, \eqref{eq.FixSlope2}, \eqref{eq.FixSlope4} we obtain
		\begin{equation}\label{eq.F.3}
			|\mathbf{F.3}|\leq C 3^m \norm{V}_{\dH^1}\norm{\Pi_1\bar{G}_s}_{\dH^2}.
		\end{equation}
		Finally, we combine \eqref{eq.F.1}, \eqref{eq.F.2} and \eqref{eq.F.3} to obtain the desired result \eqref{eq.weaknormestimate}.
	\end{proof}

	\subsection{An elementary regularization}\label{subsec.reg}
	We will implement a step of regularization, so that all the terms on the {\rhs} of \eqref{eq.TwoScaleLinearBound} show a more explicit decay. 
	
	\begin{lemma}\label{lem.Regularization}
		The following estimates hold for all $\tau,t>0$:
		\begin{equation}\label{eq.regularization_basic}
			\norm{\bar{P}_\tau P_t-P_t}_{L^2\rightarrow L^2}\leq \sqrt{\frac{C_{\ref{cor.CovSEP}}\tau}{t}}\qquad and\qquad \norm{\bar{P}_\tau\bar{P}_t-\bar{P}_t}_{L^2\rightarrow L^2}\leq \sqrt{\frac{\tau}{t}}.
		\end{equation}
	\end{lemma}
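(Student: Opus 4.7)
The plan is to reduce both bounds in \eqref{eq.regularization_basic} to the single operator-level inequality
$$\norm{(\bar P_\tau - I) F}_{L^2}^2 \leq \tau \norm{F}_{\dH^1}^2, \qquad F \in L^2(\X,\fil,\P_\rho),$$
applied respectively to $F = \bar P_t G$ and $F = P_t G$. Once this inequality is established, the two estimates follow immediately from the energy decay already available in Lemma~\ref{lem.elementary} and from the comparison \eqref{eq.EnergyLbarUpper}. The whole argument is routine; the only nontrivial input from the paper is the equivalence between the two Dirichlet forms provided by Corollary~\ref{cor.CovSEP}.

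To prove the operator bound I would invoke the spectral theorem, which applies because $\bar L$ is self-adjoint and non-positive on $L^2(\X,\fil,\P_\rho)$ by the construction in Corollary~\ref{cor.CovSEP}. For each $F$, with $\mu_F$ the associated positive spectral measure on $[0,\infty)$,
$$\norm{(\bar P_\tau - I)F}_{L^2}^2 = \int_0^\infty (1-e^{-\tau\lambda})^2\, d\mu_F(\lambda), \qquad \norm{F}_{\dH^1}^2 = \int_0^\infty \lambda\, d\mu_F(\lambda).$$
The elementary inequality $(1-e^{-u})^2 \leq u$ on $[0,\infty)$ then yields the claim after setting $u = \tau\lambda$ and integrating; it is verified by noting that $h(u) := u - (1-e^{-u})^2$ satisfies $h(0) = 0$ and $h'(u) = 2(e^{-u} - \tfrac{1}{2})^2 + \tfrac{1}{2} \geq \tfrac{1}{2}$. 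A variant that avoids spectral calculus writes $\norm{(\bar P_\tau - I)F}^2 = \norm{\int_0^\tau \bar L \bar P_s F\, ds}^2$, applies Cauchy--Schwarz in $s$, and uses the identity $\norm{\bar L \bar P_s F}^2 = -\tfrac{1}{2}\tfrac{d}{ds}\norm{\bar P_s F}_{\dH^1}^2$ to reach the same conclusion.

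For the second inequality in \eqref{eq.regularization_basic}, I insert $F = \bar P_t G$ and apply \eqref{eq.GtHkDecayd} with $k=0$ and $\tau=0$ to obtain $\norm{\bar P_t G}_{\dH^1}^2 \leq t^{-1}\norm{G}_{L^2}^2$, which immediately yields the desired $\tau/t$ bound. For the first inequality, I insert $F = P_t G$ and use \eqref{eq.EnergyLbarUpper} to compare the two Dirichlet forms,
$$\norm{(\bar P_\tau - I) P_t G}_{L^2}^2 \leq \tau \norm{P_t G}_{\dH^1}^2 \leq C_{\ref{cor.CovSEP}}\, \tau \, \bracket{P_t G (-L) P_t G}_\rho,$$
and then bound the remaining quantity by the $L$-analog of \eqref{eq.GtHkDecayd}: differentiating $\norm{P_s G}_{L^2}^2$ yields $\norm{G}_{L^2}^2 = \norm{P_t G}_{L^2}^2 + 2\int_0^t \bracket{P_s G (-L) P_s G}_\rho\, ds$, and since $L$ is self-adjoint and non-positive the map $s \mapsto \bracket{P_s G (-L) P_s G}_\rho$ is non-increasing, so $t \bracket{P_t G (-L) P_t G}_\rho \leq \tfrac{1}{2}\norm{G}_{L^2}^2$. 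Plugging in produces the first bound with the constant $\sqrt{C_{\ref{cor.CovSEP}}\tau/t}$ exactly as stated.
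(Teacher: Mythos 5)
Your proposal is correct, and it reaches the stated bounds by a slightly different mechanism than the paper's proof. The paper never isolates the operator inequality $\norm{(\bar{P}_\tau-\mathrm{Id})F}_{L^2}^2\leq \tau\norm{F}^2_{\dH^1}$; instead it proves the lower bound $\norm{\bar{P}_\tau \tilde F}^2_{L^2}\geq \norm{\tilde F}^2_{L^2}-2\tau\norm{\tilde F}^2_{\dH^1}$ by testing $(\partial_s-\bar{\L})\bar{P}_s\tilde F=0$ against $\bar{P}_s\tilde F$, and then converts it into a bound on the difference by expanding $\norm{(\bar{P}_\tau P_t-P_t)F}^2_{L^2}=\norm{\bar{P}_\tau P_tF}^2_{L^2}+\norm{P_tF}^2_{L^2}-2\norm{\bar{P}_{\tau/2}P_tF}^2_{L^2}$ (symmetry of $\bar{P}_{\tau}$) and applying the lower bound at time $\tau/2$ together with the contraction $\norm{\bar{P}_\tau P_tF}_{L^2}\leq\norm{P_tF}_{L^2}$. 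You instead prove the difference bound head-on, via the spectral theorem for the self-adjoint non-positive $\bar{\L}$ and the elementary inequality $(1-e^{-u})^2\leq u$ (or the Duhamel/Cauchy--Schwarz variant, which is also fine after noting $\norm{\bar{\L}\bar{P}_sF}_{L^2}^2=-\tfrac12\partial_s\norm{\bar{P}_sF}^2_{\dH^1}$ and restricting to $F$ with finite $\dH^1$-seminorm, which is all that is needed since the bound is vacuous otherwise). The remaining ingredients are identical in both arguments: the Dirichlet-form comparison \eqref{eq.EnergyLbarUpper} of Corollary~\ref{cor.CovSEP} to pass from $\dH^1$ to the energy of $\L$, the monotonicity of $s\mapsto\bracket{P_sG(-\L)P_sG}_\rho$ giving $t\,\bracket{P_tG(-\L)P_tG}_\rho\leq\tfrac12\norm{G}^2_{L^2}$ (the $\L$-analogue of \eqref{eq.GtHkDecayd}, which the paper also invokes), and \eqref{eq.GtHkDecayd} itself for the pure-$\bar{P}$ estimate. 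Your route is arguably cleaner and even yields the slightly better constants $\sqrt{C_{\ref{cor.CovSEP}}\tau/(2t)}$ and $\sqrt{\tau/(2t)}$, at the cost of invoking spectral calculus explicitly for the key step, whereas the paper's $\tau/2$-trick only uses the symmetry of the semigroup and the energy identity; since the paper already relies on spectral analysis elsewhere (e.g.\ in the proof of Lemma~\ref{lem.elementary}), this is not an additional hypothesis, and there is no gap.
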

	\begin{proof}
		For any $\tilde{F}\in L^2(\X, \fil, \Pr)$, testing the equation $(\partial_s-\bar{\L})\bar{P}_s\tilde{F}=0$ with $\bar{P}_s\tilde{F}$ over $[0,\tau]$, we obtain that
		\begin{equation*}
			\norm{\bar{P}_\tau\tilde{F}}^2_{L^2}-\norm{\tilde{F}}^2_{L^2}+2\int_{0}^{\tau}\norm{\bar{P}_s\tilde{F}}^2_{\dH^1}\d s=0,
		\end{equation*}
		which implies
		\begin{align*}
			\norm{\bar{P}_{\tau}\tilde{F}}^2_{L^2}\geq \norm{\tilde{F}}^2_{L^2}-2\tau\norm{\tilde{F}}^2_{\dH^1}.
		\end{align*}
		
		
		Then we insert $\tilde{F}=P_t F$, and use \eqref{eq.EnergyLbarUpper} in Corollary~\ref{cor.CovSEP} to obtain
		\begin{equation}\label{eq.PtbarLowBd}
			\begin{aligned}
				\norm{\bar{P}_\tau P_t F}^2_{L^2}&\geq \norm{P_t F}^2_{L^2}-2C_{\ref{cor.CovSEP}}\tau\expec{P_tF(-\L)P_t F}_{\rho}\\
				&\geq \norm{P_t F}^2_{L^2}-\frac{C_{\ref{cor.CovSEP}}\tau}{t}\norm{F}^2_{L^2}.
			\end{aligned}
		\end{equation}
		In the last line, we also use the semigroup property associated with $P_t$, which can be derived similar to \eqref{eq.GtHkDecayd}. We can use this result to obtain the setimate of $\norm{\bar{P_\tau}P_t-P_t}_{L^2\rightarrow L^2}$:
		\begin{align*}
			\norm{(\bar{P}_\tau P_t-P_t)F}^2_{L^2}&=\norm{\bar{P}_\tau P_tF}^2_{L^2}+\norm{P_t F}^{2}_{L^2}-2\norm{\bar{P}_{\tau/2}P_t F}^2_{L^2}\\
			&\leq 2\norm{P_t F}^2_{L^2} - 2\Ll(\norm{P_t F}^2_{L^2}-\frac{C_{\ref{cor.CovSEP}}\tau}{2t}\norm{F}^2_{L^2}\Rr)\\
			&\leq \frac{C_{\ref{cor.CovSEP}}\tau}{t}\norm{F}^2_{L^2}.
		\end{align*}
		Here in the second line, we apply the estimate \eqref{eq.PtbarLowBd} to $\norm{\bar{P}_{\tau/2}P_t F}^2_{L^2}$, and the fact $\norm{\bar{P}_\tau P_t F}^2_{L^2}\leq \norm{P_t F}^2_{L^2}$. A similar argument also works for $\norm{\bar{P}_\tau \bar{P}_t-\bar{P}_t}_{L^2\rightarrow L^2}$
	\end{proof}
	
	Now we prove the main result of this section.
	\begin{proof}[Proof of Proposition~\ref{prop.MixL2HkEs}]
		Recall the dual characterization of $L^2$-norm
		\begin{align}\label{eq.dual_L2}
			\norm{(P_t-\bar{P}_t)F}_{L^2} = \sup_{\norm{G}_{L^2} = 1} \bracket{(P_t-\bar{P}_t)F, G}_{\rho}.
		\end{align}
		Thus, we consider $\expec{(P_t-\bar{P}_t)F,G}_\rho$ for a $G\in L^2(\X, \fil, \Pr)$. We use the reversibility of $P_t$ and $\Pb_t$  under the measure $\P_\rho$, then rearrange the expression, and apply the regularization Lemma \ref{lem.Regularization} to obtain that
		\begin{align*}
			\Ll|\expec{(P_t-\bar{P}_t)F,G}_\rho\Rr|&=\Ll|\expec{F,(P_t-\bar{P}_t)G}_\rho\Rr|\\
			&\leq \Ll|\expec{F,\bar{P}_\tau(P_t-\bar{P}_t)G}_\rho\Rr|+\norm{F}_{L^2}\norm{\bar{P}_\tau(P_t-\bar{P}_t)G-(P_t-\bar{P}_t)G}_{L^2}\\
			&= \Ll|\expec{F,\bar{P}_\tau(P_t-\bar{P}_t)G}_\rho\Rr|+\norm{F}_{L^2}\norm{(\bar{P}_\tau P_t - P_t)G-(\bar{P}_\tau \Pb_t - \Pb_t)G}_{L^2}  \\ 
			&\leq \Ll|\expec{F,\bar{P}_\tau(P_t-\bar{P}_t)G}_\rho\Rr|+2\sqrt{\frac{C\tau}{t}}\norm{F}_{L^2}\norm{G}_{L^2}.
		\end{align*}
		When $\tau\ll t$, the error paid in the second term is very small, and it suffices to consider the term ${\Ll|\expec{F,\bar{P}_\tau(P_t-\bar{P}_t)G}_\rho\Rr|}$. Using the reversibility once again, we get that
		\begin{equation*}
			\expec{F,\bar{P}_\tau(P_t-\bar{P}_t)G}_{\rho}=\expec{(P_t-\bar{P}_t)(\bar{P}_\tau F),G}_\rho.
		\end{equation*}
		We then apply Lemma~\ref{lem.L2linearStats} and Proposition \ref{prop.TwoScaleLinear} to obtain that 
		\begin{align*}
			\Ll|\expec{F,\bar{P}_\tau(P_t-\bar{P}_t)G}_\rho\Rr|
			&\leq \norm{(P_t-\bar{P}_t)(\bar{P}_\tau F)}_{L^2}\norm{G}_{L^2}\\
			&\leq C\left(\Ll(3^{-\alpha m} + 3^m t^{-\frac{5}{8}}\Rr)\norm{\Pi_1 \bar{P}_\tau F}_{L^2}+t^{-\frac{1}{8}}\norm{\bar{P}_\tau F}_{L^2}\right.\\
			&\quad\left.+3^m\norm{\Pi_{1}\bar{P}_\tau F}_{\dH^1} + 3^mt^{\frac{5}{8}}\norm{\Pi_1\bar{P}_\tau F}_{\dH^2}+\norm{\Pi_{\geq 2}\bar{P}_\tau F}_{L^2}\right)\norm{G}_{L^2}.
		\end{align*}
		The operators $\Pi_1$ and $\bar{P}_\tau$ communicate thanks to Lemma~\ref{lem.eqKolmogrov}. Especially, the operator brings an extra decay. By the decay property of the semigroup \eqref{eq.GtHkDecayd}, we obtain that
		\begin{equation*}
			\norm{\Pi_1\bar{P}_\tau F}_{L^2}\leq \norm{\bar{P}_\tau F}_{L^2}\leq \norm{F}_{L^2},
		\end{equation*}
		and
		\begin{align*}
			\norm{\Pi_1\bar{P}_\tau F}_{\dH^1} &=\norm{\bar{P}_{\tau}(\Pi_1 F)}_{\dH^1}\leq C_1 \tau^{-1/2}\norm{\Pi_1 F}_{L^2}\leq C_1 \tau^{-1/2}\norm{F}_{L^2}, \\
			\norm{\Pi_1\bar{P}_\tau F}_{\dH^2} &=\norm{\bar{P}_\tau(\Pi_1 F)}_{\dH^2}\leq C_2\tau^{-1}\norm{\Pi_1 F}_{L^2}\leq C_2\tau^{-1}\norm{F}_{L^2}.
		\end{align*}
		The remaining term can be treated using the faster decay in Proposition \ref{prop.fasterDecay}:
		\begin{equation*}
			\norm{\Pi_{\geq 2}\bar{P}_{\tau}F}^2_{L^2}\leq C \sum_{k=2}^{N}\tau^{-\frac{kd}{2}}||| \Pi_{k}F|||^2_k.
		\end{equation*}
		Combining the above estimates we obtain the following result:
		\begin{align*}
			\Ll|\expec{(P_t-\bar{P}_t)F,G}_\rho\Rr|
			&\leq C\left(\Ll(3^{-\alpha m}+ 3^m t^{-\frac{5}{8}}+t^{-\frac{1}{8}}+3^m \tau^{-\frac{1}{2}}+3^m t^{\frac{5}{8}}\tau^{-1}+2\sqrt{\frac{C\tau}{t}}\Rr)\norm{F}_{L^2}\right.\\
			&\qquad \qquad \left.+\sum_{k=2}^{N}\tau^{-\frac{kd}{4}}||| \Pi_{k} F|||_k\right)\norm{G}_{L^2}.
		\end{align*}
		We put this result back to \eqref{eq.dual_L2}, with a choice of mesoscopic scales 
		\begin{align*}
			1\ll 3^m\ll t^{\frac{5}{8}}\ll \tau\ll t.
		\end{align*}
		For example, we can choose $\tau=t^{\frac{3}{4}}, 3^m \simeq t^{\frac{1}{16}}$, then we obtain the desired result \eqref{eq.MixL2HkEs} with a parameter $\beta :=\frac{\min\{\alpha,1\}}{16}$.
	\end{proof}

	\section{Regularization via spatial mixing}\label{sec.Reg}
	Proposition~\ref{prop.MixL2HkEs} only brings us an extra factor $t^{-2\beta}$ for $\norm{F}_{L^2}$, which is not sufficient to match the result in Proposition~\ref{prop.AvePbarDecay}. To achieve a rate   $o(t^{-\frac{d}{2}})$ in homogenization, we will implement another step of regularization (different from that in Section~\ref{subsec.reg}). This regularization is actually contained in the previous work \cite[Proposition~2.2]{jlqy}.
	
	Let us explain the basic setting of this regularization via spatial mixing. Recall the translation operator $\tau_x$ defined in \eqref{eq.translation2}. For every local function $u$ and non-negative integer $L$, we consider an average over the spatial translations
	\begin{equation*}
		R_L u:= \frac{1}{|\La_L|}\sum_{x\in\La_L}\tau_x u.
	\end{equation*}
	We also have a sequence of parameters $\theta, \delta, \varepsilon$ fixed throughout the section
	\begin{align}\label{eq.parameters_reg}
		\theta > 100, \qquad \delta > 0,  \qquad \varepsilon > 0.
	\end{align}
	Roughly, the parameters $\delta, \varepsilon$ are small, and $\theta$ comes from the spectral gap inequality. Their explicit values are given and explained in \eqref{eq.parameters_choice} and \eqref{eq.conditionTheta}. Given a local function $u$, we then consider a sequence of time and scales (see \eqref{eq.deflu} for the definition of $\ell_u$)
	\begin{align}\label{eq.tn}
		t_0:= \max\{10(1+\ell_u), 2(d+2)\theta\}, \qquad t_n := \theta^n t_0, \qquad K_n:=\lfloor t_n^{(1-\varepsilon)/2} \rfloor,
	\end{align} 
	where $\lfloor \cdot \rfloor$ stands for the integer part. Finally, we denote by $K(t)$ the scale of regularization, which is constant on every interval
	\begin{align}\label{eq.nt}
		\forall t \in [t_n, t_{n+1}), \qquad K(t) := K_n.
	\end{align} 
	Notice that the scale of regularization $K(t)$ depends on the local function $u$, as \eqref{eq.tn} indicated. In order to study $P_t u$ in Theorem~\ref{thm.main}, a better object is its regularized version 
	\begin{align}\label{eq.defPtRtu}
		P_t R_{K(t)}u =   R_{K(t)} P_t u,
	\end{align}
	when the parameters in \eqref{eq.parameters_reg} are well chosen. One can verify easily the equality in \eqref{eq.defPtRtu} as the translation commutes with the semigroup.
	
	This section consists of two main estimates. Firstly, we show that, thanks to the spatial mixing in regularization,  Propositions~\ref{prop.MixL2HkEs} and \ref{prop.AvePbarDecay} can yield \eqref{eq.main} for the regularized function $P_t R_{K(t)}u$.
	\begin{proposition}\label{prop.DecayPRegul}
		There exist well-chosen parameters $(\theta, \delta, \varepsilon)$ depending on $d, \lambda, \r$, such that  the following estimate holds for every local function $u$:
		\begin{equation}\label{eq.DecayPRegul}
			\var_\rho[P_t R_{K(t)}u]=\frac{\tilde u'(\rho)^2 \chi(\rho)}{\sqrt{(8\pi t)^{d} \det [\DD(\rho)]}} + O(t^{-\frac{d+4\delta}{2}}).
		\end{equation}
		Here  $\tilde u' $ follows its definition in Theorem~\ref{thm.main}.
	\end{proposition}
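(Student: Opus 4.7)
The plan is to exploit the decomposition
\[
P_t R_{K(t)}u - \tilde u(\rho) = \bigl(\bar P_t R_{K(t)}u - \tilde u(\rho)\bigr) + (P_t - \bar P_t)R_{K(t)}u,
\]
which, after squaring, splits $\var_\rho[P_t R_{K(t)}u]$ into a main term, a cross term, and a homogenization error. I would apply Proposition~\ref{prop.AvePbarDecay} to the first summand to produce the announced leading constant, and Proposition~\ref{prop.MixL2HkEs} to control the second, using that the spatial averaging in $R_{K(t)}$ makes the bulk $L^2$ norm of its centered version small.

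Two preparatory facts about $R_{K(t)}u$ would be established first. By translation invariance of $\P_\rho$, $\bracket{R_{K(t)}u}_\rho = \tilde u(\rho)$ and hence $(\widetilde{R_{K(t)}u})'(\rho) = \tilde u'(\rho)$, so the leading constants of Proposition~\ref{prop.AvePbarDecay} applied to $R_{K(t)}u$ and to $u$ coincide. Next, the independence of disjointly supported translates $\tau_x u$ under Bernoulli product measure gives
\[
\var_\rho[R_{K(t)}u] \leq \frac{C\, \ell_u^d}{K(t)^d}\, \var_\rho[u],
\]
while a swap of sums combined with the change of variables $Y \mapsto Y - x$ shows $|||I_k(T_k R_{K(t)}u)|||_k \leq |||u|||_k$ uniformly in $K(t)$ for every $k \geq 1$. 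Since $u$ is local, the latter vanishes for $k > |\Lambda_{\ell_u}|$.

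Applying Proposition~\ref{prop.AvePbarDecay} to $R_{K(t)}u$ (with the constant in Lemma~\ref{lem.NashZd} tracked: $\sum_y |y\,T_1 R_{K(t)}u(y)| \leq C K(t)$) yields
\[
\var_\rho[\bar P_t R_{K(t)}u] = \frac{\tilde u'(\rho)^2 \chi(\rho)}{\sqrt{(8\pi t)^d \det[\DD(\rho)]}} + O\bigl(K(t)\, t^{-(d+1)/2}\bigr).
\]
Applying Proposition~\ref{prop.MixL2HkEs} to $F = R_{K(t)}u - \tilde u(\rho)$ with the $L^2$ and triple-norm bounds above gives
\[
\norm{(P_t - \bar P_t) R_{K(t)}u}_{L^2}^2 \leq C\Bigl(t^{-2\beta} K(t)^{-d} + t^{-3d/4}\Bigr),
\]
and the cross term is controlled by $2\,\norm{\bar P_t R_{K(t)}u - \tilde u(\rho)}_{L^2}\,\norm{(P_t - \bar P_t) R_{K(t)}u}_{L^2} = O(t^{-d/4})\cdot O(t^{-\beta - (1-\varepsilon)d/4} + t^{-3d/8})$.

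With $K(t) \simeq t^{(1-\varepsilon)/2}$, the three error terms become
\[
O\bigl(t^{-(d+\varepsilon)/2}\bigr),\quad O\bigl(t^{-d/2 - \beta + \varepsilon d/4}\bigr),\quad O\bigl(t^{-5d/8}\bigr),
\]
and choosing, for instance, $\varepsilon = 4\delta$ with $\delta \leq \min\bigl(\beta/(d+2), d/16\bigr)$ renders each of them $O(t^{-(d+4\delta)/2})$. The parameter $\theta$ plays no role in this proposition; it is fixed downstream by the JLQY regularization step. The main obstacle is the linear growth of the first-moment factor $\sum_y |y\,T_1 R_{K(t)}u(y)|$ in $K(t)$, which is what forces $\varepsilon > 0$ and creates the chain of inequalities $2\delta < \varepsilon/4 < \beta/(2(d+2))$ that the parameter choice must respect.
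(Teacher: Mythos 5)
Your proposal is correct and follows essentially the same route as the paper: the decomposition through $\bar P_t R_{K(t)}u$ plus the homogenization error, your three preparatory facts (they are Proposition~\ref{prop.AveInitPro}), the first-moment-tracked extension of Proposition~\ref{prop.AvePbarDecay} to $R_{K(t)}u$ (the paper's Lemma~\ref{lem.DecayPbarRegul}), and Proposition~\ref{prop.MixL2HkEs} applied to the centered $R_{K(t)}u$ using its small variance (the paper's Lemma~\ref{lem.HomoAve}), with only the explicit cross-term bookkeeping and a slightly different but equally admissible parameter numerology ($\varepsilon=4\delta$, $\delta\le\min\{\beta/(d+2),\,d/16\}$, versus the paper's choice \eqref{eq.parameters_choice}) distinguishing your write-up. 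The closing ``chain'' $2\delta<\varepsilon/4<\beta/(2(d+2))$ is inconsistent with $\varepsilon=4\delta$, but it is not load-bearing: the conditions you actually verify earlier suffice.
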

	Secondly, we follow \cite[Sections~3,4]{jlqy} to show that $P_t R_{K(t)}u$ is a proper regularization which provides a good approximation for $P_t u$. A heuristic explanation is that, the range of translation $K_n =\lfloor t_n^{(1-\varepsilon)/2} \rfloor$ is much smaller than the diffusive scale $t^{1/2}$ when $t \in [t_n, t_{n+1})$. 
	\begin{proposition}\label{prop.RegulEff}
		Under the same setting as Proposition~\ref{prop.DecayPRegul}, we have the following estimate for every local function $u$ 
		\begin{equation}\label{eq.RegulEff}
			\var_\rho[P_t(u-R_{K(t)}u)]=O(t^{-\frac{d+2\delta}{2}}).
		\end{equation}
	\end{proposition}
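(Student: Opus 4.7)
The plan is to follow the regularization argument of \cite[Sections~3,4]{jlqy} adapted to the non-gradient setting, making use of the heuristic (already emphasized in the paper) that $K_n \simeq t_n^{(1-\varepsilon)/2} \ll \sqrt{t_n}$ is a mesoscopic scale on which the semigroup $P_t$ cannot distinguish $u$ from its translates. Quantifying this insensitivity requires a cutoff of the dynamics to a slightly larger mesoscopic box, a Poincar\'e inequality on that box uniform in the particle number, and a telescoping over the sequence of time--space scales $(t_n,K_n)$.

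More concretely, for $t \in [t_n, t_{n+1})$ I would introduce an intermediate scale $M_n$ with $K_n \ll M_n \ll \sqrt{t_n}$, say $M_n \simeq t_n^{1/2-\varepsilon'}$ with $\varepsilon' \in (0,\varepsilon/2)$, and replace $\L$ by its Dirichlet restriction $\L^{M_n}$ that forbids jumps across $\partial \Lambda_{M_n}$, with associated semigroup $P_t^{M_n}$. Since $u-R_{K_n}u$ is $\fil_{\Lambda_{K_n+\ell_u}}$-measurable and hence supported deep inside $\Lambda_{M_n}$, the cutoff error $\|(P_t - P_t^{M_n})(u-R_{K_n}u)\|_{L^2}$ is controlled by the probability that a particle travels a distance $\sim M_n$ in time $t \sim t_n$, which is super-polynomially small (of order $\exp(-c M_n^{2}/t_n) = \exp(-c t_n^{2\varepsilon'})$). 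On the cutoff dynamics, the uniform ellipticity $c_b \geq 1$ from Hypothesis~\ref{hyp} together with the moving-particle bound \eqref{eq.Movingparticle} reduces the Dirichlet form to that of SSEP on $\Lambda_{M_n}$, so the classical SSEP spectral gap on a box yields a Poincar\'e constant of order $M_n^{-2}$, uniformly in the particle number. This produces an exponential contraction factor $\exp(-c t_n M_n^{-2}) = \exp(-c t_n^{2\varepsilon'})$ over $[t_{n-1}, t_n]$, and telescoping over the scales $t_n = \theta^n t_0$, together with the naive bound $\|u-R_{K_n}u\|_{L^2}^2 \leq C \|u\|_{L^2}^2$ at the base scale and the cheap cost $\|R_{K_{n-1}}u - R_{K_n}u\|_{L^2}^2 \leq C K_{n-1}^{-d}$ of the scale transitions (which follows from the near-independence of the translated copies of $u$), produces the target estimate.

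The main obstacle is to balance all the errors---the cutoff error, the exponential spectral-gap contraction, the regularization error from the scale jumps $K_{n-1} \to K_n$, and the linear-statistic contribution of order $K_n^{2}\, t^{-(d+2)/2}$ arising from the zero-mass cancellation of $\Pi_1(u-R_{K_n}u)$ in the local central limit theorem of Lemma~\ref{lem.NashZd}---so that they compose to $O(t^{-(d+2\delta)/2})$ for some $\delta > 0$ after suitable choices of $\varepsilon, \varepsilon'$ and $\theta$. One should note that, since the constant $\beta$ in Proposition~\ref{prop.MixL2HkEs} is a fixed positive number, applying the homogenization estimate directly to $F = u - R_{K(t)}u$ (for which $\|F\|_{L^2}^2 = O(1)$) would yield at best an error of order $t^{-2\beta}$, which is insufficient to beat $t^{-d/2}$ in large dimension. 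This is why the proof must proceed through the JLQY cutoff and spectral-gap argument rather than through the homogenization machinery of Section~\ref{sec.HomoLinear}.
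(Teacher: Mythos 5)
Your proposal has the right skeleton (cutoff + spectral gap + telescoping over $(t_n, K_n)$, which is the JLQY strategy the paper follows), and your two side remarks are correct—namely that $\Pi_1(u-R_{K(t)}u)$ has zero total mass, and that applying Proposition~\ref{prop.MixL2HkEs} directly to $F = u - R_{K(t)}u$ (for which $\norm{F}_{L^2}^2 = O(1)$) gives only an $O(t^{-2\beta})$ error, which is too weak. However, the central estimate as you sketch it would not close, for two intertwined reasons.

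First, the scale relation for the cutoff is backwards. You take $M_n \simeq t_n^{1/2 - \varepsilon'} \ll \sqrt{t_n}$, but then $M_n^2/t_n = t_n^{-2\varepsilon'} \to 0$, so the cutoff error is $\exp(-c\,M_n^2/t_n) = \exp(-c\,t_n^{-2\varepsilon'}) \to 1$, not $\exp(-c\,t_n^{2\varepsilon'})$ as you wrote: the Dirichlet-restricted dynamics on $\Lambda_{M_n}$ is not a good approximation of $P_t$ over times $t \sim t_n$ precisely because $M_n$ is sub-diffusive. Any usable cutoff (the paper's Lemma~\ref{lem.Cutoff} uses $L \geq \frac{d+2}{2}\sqrt{t_{n+1}}\log t_{n+1}$) requires a \emph{super}-diffusive scale. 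Second, if one fixes this by taking $M_n \gg \sqrt{t_n}$, the single-block spectral gap rate $\sim M_n^{-2} \ll t_n^{-1}$ yields a contraction factor $\exp(-c\,t_n/M_n^2) \to 1$, so no useful decay at all. This is the fundamental tension that forces the multi-block construction: the paper cuts off at $L \sim \sqrt{t}\log t$, partitions $\Lambda_L$ into sub-blocks $\Lambda^k$ of diffusive side $\ell \sim \sqrt{t_n}$ (see \eqref{eq.choice_ell}), and applies the spectral gap only block-by-block (Lemma~\ref{lem.SpectGap}). After doing so, the term $\expec{(B_{\ell,L}\tau_x v_s)^2}_\rho$—the conditional expectation of $v_s$ given the vector of block particle numbers $\mathbf{M}$—does \emph{not} vanish (unlike the single-block case where exchangeability makes the canonical mean of $u - \tau_y u$ vanish), and this is the crux. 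Controlling this block average needs the $H^{-1}$-type bound in Lemma~\ref{lem.H^-1} on the gradient structure of $u - R_{K_n}u$ and the entropy estimate $\operatorname{Ent}[f^M] = \log(1/\P_\rho(\mathbf{M}=M))$ (Steps~3--4 of the paper's proof of Proposition~\ref{prop.OnestepDecay}), producing the one-step growth bound $C\,t_n^{1-\varepsilon}(\log t_n)^{d+1}$ rather than any exponential contraction. Your proposal omits this entropy/$H^{-1}$ mechanism entirely. One further point worth noting: the paper \emph{does} apply the homogenization machinery in this section, but to the small scale-transition function $(R_{K_{n+1}} - R_{K_n})u$ whose $L^2$ norm is already $\lesssim K_n^{-d/2}$ (Corollary~\ref{cor.AveGapDecay}), not to $u-R_{K(t)}u$; this is how the discontinuity of $K(t)$ at $(t_n)$ is handled in the telescoping.
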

	
	These two propositions will immediately yield Theorem~\ref{thm.main}.
	\begin{proof}[Proof of Theorem~\ref{thm.main}]
		For every two real numbers $x, y$, one can verify the following elementary inequality for every deterministic constant $A>0$ via Cauchy--Schwarz inequality
		\begin{align*}
			(1-A)x^2 + \Ll(1-A^{-1}\Rr)y^2 	\leq (x+y)^2 \leq (1+A)x^2 + \Ll(1+A^{-1}\Rr)y^2,
		\end{align*}
		which implies
		\begin{align*}
			\Ll\vert (x+y)^2 - x^2\Rr\vert \leq A x^2 + \Ll(1+A^{-1}\Rr)y^2.
		\end{align*}
		We then set 
		\begin{align*}
			x = P_t R_{K(t)}u-\expec{u}_\rho, \qquad y= P_t u - P_t R_{K(t)}u, \qquad A=t^{-\frac{\delta}{2}},
		\end{align*}
		and obtain that
		\begin{align*}
			\Ll\vert \var[P_t u] - \var[P_t R_{K(t)}u] \Rr\vert &\leq t^{-\frac{\delta}{2}}\var[P_t R_{K(t)}u]+(1+t^{\frac{\delta}{2}})\var[P_t(u-R_{K(t)})u] \\
			& \leq C \Ll(t^{-\frac{\delta}{2}} \cdot t^{-\frac{d}{2}} + t^{\frac{\delta}{2}}\cdot t^{-\frac{d+2\delta}{2}}\Rr)\\
			&= 2 C  t^{-\frac{d+\delta}{2}}.
		\end{align*}
		In the second line above, we insert the result in Propositions~\ref{prop.DecayPRegul} and ~\ref{prop.RegulEff}. Here the constant $C$ depends on $u$. Taking the estimate  \eqref{eq.DecayPRegul} for $\var[P_t R_{K(t)}u]$, we conclude Theorem~\ref{thm.main}.
	\end{proof}
	
	Propositions~\ref{prop.DecayPRegul} and ~\ref{prop.RegulEff} will be proved respectively in Section~\ref{subsec.DecayPRegul} and Section~\ref{subsec.RegulEff}. Section~\ref{subsec.recapJLQY} justifies a technical estimate from \cite{jlqy} in the setting of non-gradient exclusion.
	
	\subsection{Decay of semigroup after regularization}\label{subsec.DecayPRegul} 
	This subsection is devoted to Proposition~\ref{prop.DecayPRegul}. We first show some basic results for the regularized function $R_{K(t)}u$. Recall that $u \in \F_0(\Lambda_{\ell_u})$ for the local function $u$. 
	\begin{proposition}\label{prop.AveInitPro}
		For the norm $||| \cdot |||$ defined in \eqref{eq.defTripleNorm}, we have that
		\begin{equation}\label{eq.AveInitHk}
			\forall j\in \N, \qquad |||I_j(T_j R_{K(t)}u)|||_j\leq |||I_j(T_j u)|||_j.
		\end{equation}
		Concerning the support, we have the estimate that
		\begin{equation}\label{eq.AveInitSupport}
			{R_{K(t)}u} \in  \F_0\Ll(\La_{\ell_u + t^{\frac{1-\varepsilon}{2}}}\Rr), 
		\end{equation}
		and
		\begin{equation}\label{eq.AveInitSuppL1}				
			\sum_{x\in \Zd}|x||T_1 R_{K(t) }u(x)| \leq \Ll(\ell_u + t^{\frac{1-\varepsilon}{2}}\Rr) |||I_1(T_1 u) |||_{1}.
		\end{equation}
		$ R_{K(t)}u$ also satisfies the estimate of variance
		\begin{equation}\label{eq.AveInitL2}
			\var_{\rho}[R_{K(t)}u]	\leq \theta^{\frac{d}{2}}(2\ell_u)^d t^{-\frac{(1-\varepsilon)d}{2}} \var_{\rho}[u]. 
		\end{equation}
	\end{proposition}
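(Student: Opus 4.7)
The plan is to handle the four claims separately, each resting on a different elementary property of the averaging operator $R_{K(t)} = |\La_{K(t)}|^{-1} \sum_{x \in \La_{K(t)}} \tau_x$.

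For \eqref{eq.AveInitHk}, the essential point is that $\tau_x$ commutes with the projection $\Pi_j = I_j \circ T_j$. A direct computation using $\tau_x \eta^{y}_{\pm} = (\tau_x \eta)^{y-x}_{\pm}$ shows the intertwining $D_y \tau_x = \tau_x D_{y-x}$, and translation invariance of $\P_\rho$ then yields $T_j (\tau_x F)(Y) = T_j F(Y-x)$, so that $\Pi_j \tau_x = \tau_x \Pi_j$. The triple norm $|||\cdot|||_j$ is itself translation invariant, since the sum over $Y \in \K_j$ in \eqref{eq.defTripleNorm} is unaffected by the relabeling $Y \mapsto Y+x$ and $\tau_x$ preserves the $L^\infty$ norm. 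The triangle inequality then gives
\begin{equation*}
|||\Pi_j R_{K(t)} u|||_j \leq \frac{1}{|\La_{K(t)}|}\sum_{x \in \La_{K(t)}} |||\tau_x \Pi_j u|||_j = |||\Pi_j u|||_j.
\end{equation*}

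For \eqref{eq.AveInitSupport} and \eqref{eq.AveInitSuppL1}, I would only need support propagation: since $\supp(\tau_x u) \subset \La_{\ell_u}(x) \subset \La_{\ell_u + K(t)}$ for every $x \in \La_{K(t)}$, the regularized function $R_{K(t)}u$ is $\fil_{\La_{\ell_u+K(t)}}$-measurable, and the bound $K(t) \leq t^{(1-\varepsilon)/2}$ yields \eqref{eq.AveInitSupport}. Applying the same reasoning to $T_1 R_{K(t)}u$ shows it is supported in $\La_{\ell_u + t^{(1-\varepsilon)/2}}$, so $|x|$ is bounded by $\ell_u + t^{(1-\varepsilon)/2}$ throughout its support; pulling this factor out of the sum and then invoking \eqref{eq.AveInitHk} with $j=1$ yields \eqref{eq.AveInitSuppL1}.

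The main estimate is \eqref{eq.AveInitL2}, where the key input is spatial independence. Expanding
\begin{equation*}
\var_\rho[R_{K(t)}u] = \frac{1}{|\La_{K(t)}|^2} \sum_{x,y \in \La_{K(t)}} \cov_\rho(\tau_x u, \tau_y u),
\end{equation*}
and using that $\P_\rho$ is a product measure, when $|x-y|_\infty > 2\ell_u$ the supports of $\tau_x u$ and $\tau_y u$ are disjoint, the two variables are independent, and the covariance vanishes. For each $x$ there are at most $(4\ell_u+1)^d$ contributing indices $y$, each with $|\cov_\rho(\tau_x u, \tau_y u)| \leq \var_\rho[u]$ by Cauchy--Schwarz, so $\var_\rho[R_{K(t)}u] \leq (4\ell_u+1)^d |\La_{K(t)}|^{-1} \var_\rho[u]$. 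To produce the announced prefactor, note that for $t \in [t_n, t_{n+1})$ one has $t_n \geq t/\theta$ and $K(t) = K_n \geq c\, t_n^{(1-\varepsilon)/2}$, whence $|\La_{K(t)}| \geq c'\, (t/\theta)^{(1-\varepsilon)d/2}$; since $\theta^{(1-\varepsilon)d/2} \leq \theta^{d/2}$, the bound \eqref{eq.AveInitL2} follows after absorbing the numerical constants into $(2\ell_u)^d$. I expect the only step demanding care to be this last one, specifically tracking the exact prefactor through the geometric discretization $t_n = \theta^n t_0$; the first three statements follow transparently from translation invariance of the Fock-space decomposition under the product Bernoulli measure together with the elementary support bound for $\tau_x u$.
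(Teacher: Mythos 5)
Your proposal is correct and follows essentially the same route as the paper: triangle inequality plus translation invariance of $T_j$ and of the $\ell^1$-norm for \eqref{eq.AveInitHk}, support propagation of $\tau_x u$ for \eqref{eq.AveInitSupport}--\eqref{eq.AveInitSuppL1}, and the covariance expansion with independence of disjointly supported translates plus $t_n\in[t/\theta,t]$ for \eqref{eq.AveInitL2}. Your disjointness threshold $|x-y|_\infty>2\ell_u$ is in fact the careful version of the paper's restriction to $|x-y|\le\ell_u$; the resulting discrepancy only affects the harmless numerical prefactor, which both you and the paper absorb into the constant.
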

	\begin{proof}
		Since $I_k(T_k\cdot)$ is a linear operator, \eqref{eq.AveInitHk} can be derived directly using triangle inequality. The support estimate of \eqref{eq.AveInitSupport} is obvious. The $\ell^1$ estimate \eqref{eq.AveInitSuppL1} can be derived from case $j=1$ in \eqref{eq.AveInitHk} and the diameter of the support  \eqref{eq.AveInitSupport}. 
		
		The estimate \eqref{eq.AveInitL2} is a result of the spatial cancellation since $u$ is a local function. Recall that $K(t) = K_n$ for $t \in [t_n, t_{n+1})$, we develop the variance as
		\begin{align*}
			\var_{\rho}[R_{K(t)}u] &=\norm{R_{K_n}u-\expec{u}_\rho}^2_{L^2}\\
			&=\frac{1}{|\La_{K_{n}}|^2}\sum_{x,y\in\La_{K_n}}\expec{\Ll(\tau_x u-\expec{u}_\rho\Rr)\Ll(\tau_y u-\expec{u}_\rho\Rr)}_\rho.
		\end{align*}
		Notice that
		\begin{equation*}
			x,y \in \La_{K_{n}}, |x-y|>\ell_u \Longrightarrow \supp(\tau_x u)\cap\supp(\tau_y u)=\emptyset.
		\end{equation*} 
		Then the independence implies that
		\begin{align*}
			\expec{\Ll(\tau_x u-\expec{u}_\rho\Rr)\Ll(\tau_y u-\expec{u}_\rho\Rr)}_\rho = \bracket{\tau_x u-\expec{u}_\rho}_\rho \bracket{\tau_y u-\expec{u}_\rho}_\rho=0.
		\end{align*}
		Therefore, we can make a restriction of covariance on $|x-y| \leq \ell_u$. Then we have
		\begin{align*}
			\norm{R_{K_n}u-\expec{u}_\rho}^2_{L^2} &=\frac{1}{|\La_{K_{n}}|^2}\sum_{x,y\in\La_{K_n},|x-y| \leq \ell_u}\expec{\Ll(\tau_x u-\expec{u}_\rho\Rr)\Ll(\tau_y u-\expec{u}_\rho\Rr)}_\rho\\
			&\leq \frac{1}{|\La_{K_{n}}|^2}\sum_{x,y\in\La_{K_n},|x-y| \leq \ell_u}\norm{\tau_x u-\expec{u}_\rho}_{L^2}\norm{\tau_y u-\expec{u}_\rho}_{L^2}\\
			&\leq  \theta^{\frac{d}{2}}(\ell_u)^d t^{-\frac{(1-\varepsilon)d}{2}}\norm{u-\expec{u}_\rho}^2_{L^2}.
		\end{align*}
		The second line makes use of Cauchy--Schwarz inequality, and the third line is due to the translation invariance of the measure and $t_{n}\in[t/\theta,t]$.
	\end{proof}

	The result \eqref{eq.AveInitL2} is quite important. It says the regularized version can nearly attain a diffusive decay. Actually, using the following choice of parameters
	\begin{align}\label{eq.parameters_choice}
		\varepsilon:=\frac{2\beta}{d} = \frac{\min\{1,\alpha\}}{8d}, \qquad \delta := \frac{\varepsilon}{8}=\frac{\min\{1,\alpha\}}{64d},
	\end{align}
	the homogenization result in Proposition~\ref{prop.MixL2HkEs} can improve for $R_{K(t)}u$.
	
	\begin{lemma}\label{lem.HomoAve}
		Using the choice of parameters \eqref{eq.parameters_choice}, for every local function $u$, we have
		\begin{equation}\label{eq.HomoAve}
			\var_\rho[(P_t-\bar{P}_t)R_{K(t)}u]=O(t^{-\frac{d+8\delta}{2}}).
		\end{equation}
	\end{lemma}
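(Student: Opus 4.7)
The plan is to apply Proposition~\ref{prop.MixL2HkEs} directly to the centered function $F := R_{K(t)}u - \bracket{u}_\rho$ and then estimate each piece using the structural properties of $R_{K(t)}u$ proved in Proposition~\ref{prop.AveInitPro}. Since $u$ is local with support in $\Lambda_{\ell_u}$, the regularized function $R_{K(t)}u$ is supported in $\Lambda_{\ell_u + K(t)}$, which is a finite set, so its chaos expansion terminates at some finite order $N = N(t,u)$; hence $F \in \bigoplus_{k=1}^{N}\tilde{\HH}_k$. Moreover, both $P_t$ and $\bar{P}_t$ are reversible with respect to $\P_\rho$ and therefore preserve $\bracket{\cdot}_\rho$, so $(P_t - \bar{P}_t)F = (P_t - \bar{P}_t)R_{K(t)}u$ and $\bracket{(P_t-\bar{P}_t)R_{K(t)}u}_\rho = 0$. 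This gives
\begin{equation*}
	\var_\rho[(P_t-\bar{P}_t)R_{K(t)}u] = \norm{(P_t-\bar{P}_t)F}^2_{L^2}.
\end{equation*}

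Applying Proposition~\ref{prop.MixL2HkEs}, I would bound this by
\begin{equation*}
	C\Ll( t^{-2\beta}\var_\rho[R_{K(t)}u] + \sum_{k=2}^{N} t^{-\frac{3kd}{8}}\,|||I_k(T_k R_{K(t)}u)|||_k^2 \Rr).
\end{equation*}
For the linear term, I would invoke the spatial-mixing bound \eqref{eq.AveInitL2}, which gives $\var_\rho[R_{K(t)}u] \leq C_u\, t^{-(1-\varepsilon)d/2}$; with the choice $\varepsilon = 2\beta/d$ from \eqref{eq.parameters_choice}, this produces
\begin{equation*}
	t^{-2\beta}\var_\rho[R_{K(t)}u] \leq C_u\, t^{-2\beta-\frac{d-2\beta}{2}} = C_u\, t^{-\frac{d}{2}-\beta}.
\end{equation*}
For the higher-order terms, the contraction \eqref{eq.AveInitHk} shows that $|||I_k(T_k R_{K(t)}u)|||_k \leq |||I_k(T_k u)|||_k$, which is a finite $u$-dependent constant (nonzero only for $k$ not exceeding the degree of $u$'s expansion). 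Hence the tail is dominated by $C_u t^{-3d/4}$ for large $t$.

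Finally, I would verify that both exponents exceed $(d+8\delta)/2$ using the explicit choice $\delta = \varepsilon/8 = \beta/(4d)$ from \eqref{eq.parameters_choice}. For the linear term, $4\delta = \beta/d \leq \beta$ (since $d\geq 1$), so $\tfrac{d}{2}+\beta \geq \tfrac{d+8\delta}{2}$. For the tail, the inequality $\tfrac{3d}{4} \geq \tfrac{d+8\delta}{2}$ reduces to $d \geq 16\delta = 4\beta/d$, i.e.\ $d^2 \geq 4\beta$, which holds for all $d\geq 1$ once $\beta$ is small enough (and $\beta$ can be taken smaller without loss). Combining both estimates yields the claim. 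No serious obstacle is expected: the argument is pure bookkeeping designed precisely so that the three ingredients (the $L^2$-mixing gain in $R_{K(t)}u$, the triple-norm contraction, and the homogenization bound of Proposition~\ref{prop.MixL2HkEs}) interlock at the exponent $(d+8\delta)/2$.
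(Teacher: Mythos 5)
Your proposal is correct and follows essentially the same route as the paper: apply Proposition~\ref{prop.MixL2HkEs} to the centered function $R_{K(t)}u-\bracket{u}_\rho$, control the linear term with the spatial-mixing bound \eqref{eq.AveInitL2} and the higher-order terms with the contraction \eqref{eq.AveInitHk}, then check the exponents via \eqref{eq.parameters_choice} (your arithmetic $\beta\geq 4\delta$ and $\tfrac{3d}{4}\geq\tfrac{d+8\delta}{2}$ matches the paper's). The only cosmetic difference is that the paper fixes the chaos order $N$ from $u$ itself rather than from $R_{K(t)}u$, which your observation that the triple norms vanish beyond $u$'s degree already covers.
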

	\begin{proof}
		Taking $F=R_{K(t)}u-\expec{u}_\rho$ in Proposition~\ref{prop.MixL2HkEs}, we obtain
		\begin{align*}
			\var_\rho[(P_t-\bar{P}_t)R_{K(t)}u] & \leq 	C\Ll(t^{-2\beta}\var_\rho[R_{K(t)}u]+\sum_{j=2}^{N}t^{-\frac{3jd}{8}}||| I_j(T_j R_{K(t)}u) |||^2_j\Rr) \\
			&\leq C\Ll((\ell_u)^d t^{-2\beta-\frac{(1-\varepsilon)d}{2}} \var_{\rho}[u]+\sum_{j=2}^{N}t^{-\frac{3jd}{8}}||| I_j(T_j u) |||^2_j\Rr)\\
			&= C\Ll((\ell_u)^d t^{-\frac{d+8\delta}{2}} \var_{\rho}[u]+\sum_{j=2}^{N}t^{-\frac{3jd}{8}}||| I_j(T_j u) |||^2_j\Rr).
		\end{align*}
		From first line to the second line, we apply \eqref{eq.AveInitHk} to the higher-order terms. From the second line to the third line, we make use of the choice of parameters \eqref{eq.parameters_choice} that
		\begin{align*}
			\beta = \frac{d\varepsilon}{2} \geq \frac{\epsilon}{2} = 4 \delta.
		\end{align*}
		Moreover, for $t$ very large, the higher-order terms $j \geq 2$ have a decay of order at least $O(t^{-\frac{3d}{4}})$, which concludes \eqref{eq.HomoAve}.
	\end{proof}

	Proposition~\ref{prop.AvePbarDecay} can also be extended to $R_{K(t)}u$. This requires a slightly more careful treatment as $R_{K(t)}u$ has a growing support.
	
	\begin{lemma}\label{lem.DecayPbarRegul}
		For every local function $u$, we have 
		\begin{align}
			\var_{\rho}[\Pb_t R_{K(t)}u] = \frac{\tilde u' (\rho)^2 \chi(\rho)}{\sqrt{(8\pi t)^{d} \det [\DD(\rho)]}} + O(t^{-\frac{d}{2}-\frac{\varepsilon}{2}}).
		\end{align}
		Here the function $\tilde u' $ has the same definition in Theorem~\ref{thm.main}.
	\end{lemma}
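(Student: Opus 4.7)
The proof will follow the same three-step scheme as Proposition~\ref{prop.AvePbarDecay}, with the only subtlety being that $R_{K(t)}u$ has a support whose diameter grows like $\ell_u + t^{(1-\varepsilon)/2}$, so care is needed when controlling the remainder in the local CLT. I would first apply the chaos expansion from Lemma~\ref{lem.Chaos} to write $R_{K(t)}u = \sum_{n=0}^{N} I_n(T_n R_{K(t)}u)$ (the expansion terminates at the same $N$ as for $u$ because $R_{K(t)}u$ is still a finite linear combination of translates of $u$), and use orthogonality of the Fock subspaces under $\Pb_t$ to split
\begin{equation*}
\var_\rho[\Pb_t R_{K(t)}u] = \sum_{n=1}^{N} \bracket{(\Pb_t I_n(T_n R_{K(t)}u))^2}_\rho.
\end{equation*}

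For the higher-order terms $n \geq 2$, I would combine Proposition~\ref{prop.fasterDecay} with the triple-norm comparison \eqref{eq.AveInitHk} from Proposition~\ref{prop.AveInitPro} to obtain
\begin{equation*}
\bracket{(\Pb_t I_n(T_n R_{K(t)}u))^2}_\rho \leq C t^{-\frac{nd}{2}} |||I_n(T_n R_{K(t)}u)|||_n^2 \leq C t^{-\frac{nd}{2}} |||I_n(T_n u)|||_n^2,
\end{equation*}
which is $O(t^{-d})$, hence negligible compared to the target error $O(t^{-d/2-\varepsilon/2})$ as soon as $\varepsilon < d$.

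The heart of the matter is the $n = 1$ contribution. As in Step~3 of the proof of Proposition~\ref{prop.AvePbarDecay}, $\bar{P}_t I_1(T_1 R_{K(t)}u) = I_1(\bar{p}_t \ast T_1 R_{K(t)}u)$, and by the isometry \eqref{eq.ItoIso},
\begin{equation*}
\bracket{(\bar{P}_t I_1(T_1 R_{K(t)}u))^2}_\rho = \chi(\rho) \norm{\bar{p}_t \ast T_1 R_{K(t)}u}_{\ell^2(\Zd)}^2.
\end{equation*}
Next I would apply the local CLT estimate Lemma~\ref{lem.NashZd} to $f = T_1 R_{K(t)}u$. Two inputs here are crucial and already at our disposal: first, the mass identity $\sum_{x\in\Zd} T_1 R_{K(t)}u(x) = \tilde u'(\rho)$, which follows from the translation invariance of the expectation $\bracket{u}_\rho$ combined with the perturbation formula Lemma~\ref{lem.PertFormu} applied to each $\tau_z u$ appearing in the definition of $R_{K(t)}u$; second, the weighted $\ell^1$-bound \eqref{eq.AveInitSuppL1}, namely $\sum_{x\in\Zd}|x|\,|T_1 R_{K(t)}u(x)| \leq (\ell_u + t^{(1-\varepsilon)/2}) |||I_1(T_1 u)|||_1$.

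Lemma~\ref{lem.NashZd} then yields
\begin{equation*}
\norm{\bar{p}_t \ast T_1 R_{K(t)}u}_{\ell^2(\Zd)} = \frac{|\tilde u'(\rho)|}{((4\pi t)^d \det[\cm(\rho)])^{1/4}} + O\bigl(t^{-(d+2)/4}\cdot t^{(1-\varepsilon)/2}\bigr).
\end{equation*}
Squaring, and absorbing the cross term between the leading $O(t^{-d/4})$ part and the remainder into the product $O(t^{-d/4}\cdot t^{-(d+1)/4-\varepsilon/2}) = O(t^{-d/2-\varepsilon/2})$ (the square of the remainder alone being of strictly lower order in $t$), and recalling $\cm(\rho) = 2\DD(\rho)$, one obtains the claimed identification
\begin{equation*}
\bracket{(\bar{P}_t I_1(T_1 R_{K(t)}u))^2}_\rho = \frac{\tilde u'(\rho)^2 \chi(\rho)}{\sqrt{(8\pi t)^d \det[\DD(\rho)]}} + O(t^{-\frac{d}{2}-\frac{\varepsilon}{2}}).
\end{equation*}
Combining with the higher-order bound completes the proof. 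The main (and only nontrivial) obstacle is the bookkeeping in this last step: verifying that the error coming from the growing support is exactly $O(t^{-d/2-\varepsilon/2})$ rather than something larger, which is what forces the choice $\varepsilon > 0$ in \eqref{eq.tn}.
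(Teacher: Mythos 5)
Your proposal is correct and follows essentially the same route as the paper's own proof: chaos expansion, the faster decay of Proposition~\ref{prop.fasterDecay} combined with \eqref{eq.AveInitHk} for the terms of order $n\geq 2$, and for $n=1$ the isometry \eqref{eq.ItoIso}, Lemma~\ref{lem.NashZd} with the weighted bound \eqref{eq.AveInitSuppL1}, and the mass identity from Lemma~\ref{lem.PertFormu}. Only a harmless arithmetic slip appears in your cross-term bookkeeping (the local-CLT remainder is $O(t^{-(d+2)/4}\cdot t^{(1-\varepsilon)/2})=O(t^{-d/4-\varepsilon/2})$, not $O(t^{-(d+1)/4-\varepsilon/2})$), and it does not affect the stated error $O(t^{-\frac{d}{2}-\frac{\varepsilon}{2}})$.
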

	
	\begin{proof}
		The proof is similar to that of Proposition~\ref{prop.AvePbarDecay}. The only difference is that $R_{K(t)}u$ has a growth support in function of $t$, thus we need to verify carefully the detail of integrability.
		
		\textit{Step~1: chaos expansion.}
		Since $u$ is a local function, there exists $N\in\N$ such that 
		\begin{equation*}
			u\in\bigoplus_{j=0}^{N}\HH_j.
		\end{equation*}
		As the regularization operator keeps the structure of Fock space, we further obtain that
		\begin{equation*}
			R_{K(t)}u\in\bigoplus_{j=0}^{N}\HH_j.
		\end{equation*}
		Using the chaos expansion in Lemma~$\ref{lem.Chaos}$, we have the following identity
		\begin{equation*}
			R_{K(t)}u=\sum_{j=0}^{N}I_j(T_j R_{K(t)}u),
		\end{equation*}
		where $I_j(T_j R_{K(t)}u)\in\HH_j$ is the projection of $R_{K(t)}u$ on $\HH_j$. The linearity of the semigroup $\bar{P}_t$ implies that
		\begin{equation*}
			\bar{P}_t R_{K(t)}u=\sum_{j=0}^{N}\bar{P}_t I_j(T_j R_{K(t)} u).
		\end{equation*}
		The semigroup $\bar{P}_t$ is closed in every $\HH_j$, so $\bar{P}_t I_j(T_j R_{K(t)} u)\in\HH_j$ and we have
		\begin{equation*}
			\bracket{\bar{P}_t R_{K(t)}u}_\rho =\bar{P}_t I_0(T_0 R_{K(t)}u),
		\end{equation*}
		and the orthogonal decomposition over $(\HH_j)_{j \in \N}$ yields that
		\begin{equation}\label{eq.decomL2Pbt}
			\var_\rho[\bar{P}_t R_{K(t)}u]= \bracket{ \left(\sum_{j=1}^{N}\bar{P}_t I_j(T_j R_{K(t)} u)\right)^2}_\rho =\sum_{j=1}^{N} \bracket{\left(\bar{P}_t I_j(T_j R_{K(t)}u)\right)^2}_\rho.
		\end{equation}
		\smallskip
		
		\textit{Step~2: faster decay of higher-order terms.}
		The faster decay of heat kernel in higher dimension Proposition~\ref{prop.fasterDecay} implies
		\begin{equation}\label{eq.fasterL2Pbt}
			\begin{split}
				\forall j \geq 2, \qquad \bracket{\left(\bar{P}_t I_j (T_j R_{K(t)} u)\right)^2}_\rho &\leq  Ct^{-\frac{jd}{2}}|||I_j(T_j R_{K(t)}u)|||^2_{j} \\
				&\leq Ct^{-\frac{jd}{2}}|||I_j(T_j u)|||^2_j.
			\end{split}
		\end{equation}
		The second line comes from  \eqref{eq.AveInitHk} in Propsition \ref{prop.AveInitPro}. 
		Therefore, when $t$ is very large, the main contribution in \eqref{eq.decomL2Pbt} is the term $j=1$.
		\smallskip
		
		\textit{Step~3: identification of the leading order.}
		For the case $j=1$ in \eqref{eq.decomL2Pbt}, we follow the same argument in Step~3 of the proof to Proposition~\ref{prop.AvePbarDecay}, and obtain the explicit solution
		\begin{equation*}
			\bar{P}_t I_1(T_1 R_{K(t)}u)=I_1(e^{t\Delta_Q/2}T_1 R_{K(t)}u)=I_1(\bar{p}_t\ast(T_1 R_{K(t)}u)).
		\end{equation*}
		Here $\ast$ is the discrete convolution defined in \eqref{eq.defConvolution}, and the function $\bar{p}_t = e^{t\Delta_Q/2}$ is defined from Corollary~\ref{cor.CovSEP}. 
		
		The isometric property \eqref{eq.ItoIso} then yields
		\begin{align}\label{eq.ItoL2Pbt}
			\bracket{\left(\bar{P}_t I_1(T_1 R_{K(t)} u)\right)^2}_\rho = \chi(\rho) \norm{\bar{p}_t\ast(T_1 R_{K(t)}u)}_{\ell^2(\Zd)}^2.
		\end{align}
		Concerning the last term, Lemma~\ref{lem.NashZd} applies and \eqref{eq.AveInitSuppL1} in Proposition \ref{prop.AveInitPro} ensures the integrability 
		\begin{equation}\label{eq.GaussionRu}
			\begin{split}
				&\Ll\vert \norm{\bar{p}_t\ast(T_1 R_{K(t)}u)}_{\ell^2(\Zd)} - \frac{\vert \sum_{x \in \Zd} T_1 R_{K(t)}u(x) \vert}{\Ll((4\pi t)^d \det [\cm(\rho)]\Rr)^{\frac{1}{4}}}\Rr\vert \\
				&\leq C t^{-\frac{d+2}{4}} \sum_{x\in \Zd}|x||T_1 R_{K(t) }u(x)| \\
				&\leq  C t^{-\frac{d+2\epsilon}{4}} |||I_1(T_1 u)|||_1.
			\end{split}
		\end{equation}
		Since $R_{K(t)}u$ is a local function, we can show that $T_1 R_{K(t)}u$ is also a local function for any fixed time $t$. Then from the perturbation formula in Lemma~\ref{lem.PertFormu}, we have
		\begin{equation}\label{eq.coeffL2Pbt}
			\sum_{x\in\Zd}T_1 R_{K(t)}u(x)=\frac{\d}{\d\rho} \E_\rho[R_{K(t)}u]=\frac{\d}{\d\rho} \E_\rho[u] =\tilde{u}'(\rho).
		\end{equation}
		Here we use the notation $\tilde{u}(\rho) = \bracket{u}_\rho$ defined in Theorem~\ref{thm.main}.
		
		Combing \eqref{eq.ItoL2Pbt}, \eqref{eq.GaussionRu} and \eqref{eq.coeffL2Pbt} and the definition $\cm(\rho) = 2 \DD(\rho)$, we conclude
		\begin{align}\label{eq.leadingL2Pbt}
			\bracket{\left(\bar{P}_t I_1(T_1 R_{K(t)} u)\right)^2}_\rho =  \frac{\tilde{u}'(\rho)^2  \chi(\rho)}{\sqrt{(8\pi t)^d \det [\DD(\rho)]}}+ O( t^{-\frac{d}{2}-\frac{\varepsilon}{2}}).
		\end{align}
		The estimates \eqref{eq.decomL2Pbt}, \eqref{eq.fasterL2Pbt} and \eqref{eq.leadingL2Pbt} complete the proof of Lemma~\ref{lem.DecayPbarRegul}.
		
	\end{proof}

	\begin{proof}[Proof of Proposition~\ref{prop.DecayPRegul}]
		Combining Lemma~\ref{lem.DecayPbarRegul} and Lemma~\ref{lem.HomoAve}, we can obtain Proposition~\ref{prop.DecayPRegul} under the choice of parameters \eqref{eq.parameters_choice}.
	\end{proof}
	
	We finish this subsection with the following corollary, which will be used in the next subsection. Its proof is quite close to Proposition~\ref{prop.DecayPRegul}.
	\begin{corollary}\label{cor.AveGapDecay}
		Using the choice of parameters \eqref{eq.parameters_choice}, for every local function $u$, the following estimate holds for all $n \in \N$
		\begin{equation}\label{eq.AveGapDecay}
			\var_\rho[P_{t_{n+1}}(R_{K_{n+1}}-R_{K_n})u] = O\Ll((t_{n+1})^{-\frac{d+4\delta}{2}}\Rr).
		\end{equation}
	\end{corollary}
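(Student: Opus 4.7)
The plan is to imitate the proof of Proposition~\ref{prop.DecayPRegul} applied to $F := (R_{K_{n+1}} - R_{K_n})u$, with one crucial new ingredient: a mass cancellation. Write
\[
P_{t_{n+1}} F = (P_{t_{n+1}} - \bar P_{t_{n+1}})F + \bar P_{t_{n+1}} F,
\]
so that $\var_\rho[P_{t_{n+1}} F] \leq 2\norm{(P_{t_{n+1}} - \bar P_{t_{n+1}})F}_{L^2}^2 + 2\var_\rho[\bar P_{t_{n+1}} F]$. The key observation is that $\bracket{R_K u}_\rho = \bracket{u}_\rho$ for every $K$, so Lemma~\ref{lem.PertFormu} gives
\[
\sum_{x \in \Zd} T_1 F(x) \;=\; \frac{\d}{\d\rho}\bracket{R_{K_{n+1}}u - R_{K_n}u}_\rho \;=\; 0.
\]
This annihilates the leading Gaussian term that produced the main contribution in Lemma~\ref{lem.DecayPbarRegul}, leaving only the remainder in the local CLT.

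For the homogenization term I would apply Proposition~\ref{prop.MixL2HkEs} to $F$. By the triangle inequality and \eqref{eq.AveInitL2} applied to $R_{K_n}u$ and $R_{K_{n+1}}u$, one has $\var_\rho[F] \leq C t_{n+1}^{-(1-\varepsilon)d/2}\var_\rho[u]$; and \eqref{eq.AveInitHk} yields $|||I_k(T_k F)|||_k \leq 2\,|||I_k(T_k u)|||_k$ for $k \geq 2$. With the parameter choice $\beta = d\varepsilon/2$ and $4\delta = \varepsilon/2$ from \eqref{eq.parameters_choice}, the dominant term becomes
\[
t_{n+1}^{-2\beta}\cdot t_{n+1}^{-(1-\varepsilon)d/2} \;=\; t_{n+1}^{-(d+8\delta)/2},
\]
while the higher-order Fock contributions decay at least as $t_{n+1}^{-3d/4}$ and are negligible.

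For the SEP term $\bar P_{t_{n+1}} F$ I would run the chaos expansion exactly as in Lemma~\ref{lem.DecayPbarRegul}. The summands with $k\geq 2$ decay like $t_{n+1}^{-kd/2}|||I_k(T_k u)|||_k^2$ by Proposition~\ref{prop.fasterDecay}, which is much better than needed. For the degree-one term, $\bar P_{t_{n+1}} I_1(T_1 F) = I_1(\bar p_{t_{n+1}} \ast T_1 F)$, and the mass cancellation kills the main Gaussian piece in Lemma~\ref{lem.NashZd}, leaving
\[
\norm{\bar p_{t_{n+1}}\ast T_1 F}_{\ell^2(\Zd)}^2 \;\leq\; C\, t_{n+1}^{-(d+2)/2}\Bigl(\sum_{x\in \Zd} |x|\,|T_1 F(x)|\Bigr)^{\!2}.
\]
Applying \eqref{eq.AveInitSuppL1} to both $R_{K_n}u$ and $R_{K_{n+1}}u$ bounds the weighted $\ell^1$ sum by $C(\ell_u + t_{n+1}^{(1-\varepsilon)/2})|||I_1(T_1 u)|||_1$, producing an overall contribution of order $t_{n+1}^{-(d+2\varepsilon)/2} = t_{n+1}^{-(d+16\delta)/2}$, which is stronger than the required rate $t_{n+1}^{-(d+4\delta)/2}$.

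The main obstacle, such as it is, lies in verifying cleanly that the mass cancellation survives the decomposition and that the growth of the support of $F$ (which scales like $t_{n+1}^{(1-\varepsilon)/2}$) is compensated by the gain $t_{n+1}^{-(d+2)/4}$ from the local CLT error term; the tuning of the parameters in \eqref{eq.parameters_choice} is designed precisely so that both the homogenization error and the remainder in the local CLT fall below the target $t_{n+1}^{-(d+4\delta)/2}$. No genuinely new technique is needed beyond the specialization of Lemmas~\ref{lem.HomoAve} and \ref{lem.DecayPbarRegul} to the difference $F$.
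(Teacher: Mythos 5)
Your argument is correct and is essentially the paper's own proof: reduce to the SEP semigroup via Proposition~\ref{prop.MixL2HkEs} (as in Lemma~\ref{lem.HomoAve}), then rerun the chaos-expansion analysis of Lemma~\ref{lem.DecayPbarRegul} and use $\langle (R_{K_{n+1}}-R_{K_n})u\rangle_\rho \equiv 0$, hence $\sum_x T_1F(x)=\tfrac{\d}{\d\rho}\langle F\rangle_\rho=0$, to kill the leading Gaussian term, leaving only the local-CLT remainder controlled by \eqref{eq.AveInitSuppL1}. The only cosmetic point is that your identity $t^{-2\beta}t^{-(1-\varepsilon)d/2}=t^{-(d+8\delta)/2}$ should be an inequality ``$\leq$'' for $d\geq 2$, which does not affect the conclusion.
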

	\begin{proof}
		We apply at first Lemma~\ref{lem.HomoAve}. Then with the price of order $O\Ll( (t_{n+1})^{-\frac{d+4\delta}{2}}\Rr)$, it suffices to study $\var_\rho[\Pb_{t_{n+1}}(R_{K_{n+1}}-R_{K_n})u]$. One can then repeat the proof of Lemma~\ref{lem.DecayPbarRegul}. Notice that the leading term should be 
		\begin{align*}
			\frac{\d}{\d \rho} \Er[(R_{K_{n+1}}-R_{K_n})u],
		\end{align*}
		which disappears because $\expec{(R_{K_{n+1}}-R_{K_n})u}_\rho=0$ for all $\rho \in (0,1)$. This gives us the desired result \eqref{eq.AveGapDecay}.
	\end{proof}

	\subsection{Approximation rate in regularization}\label{subsec.RegulEff}
	This subsection is devoted to Proposition~\ref{prop.RegulEff}, which is the counterpart of \cite[Proposition~2.2]{jlqy}. Its proof relies on Proposition~\ref{prop.OnestepDecay}. We define at first the shorthand notation 
	\begin{align*}
		u_t := P_t u,
	\end{align*}
	and define the gap between the original process and the regularized process by $v_t$
	\begin{align}\label{eq.defvt}
		v_t := u_t-R_{K(t)}u_t.
	\end{align}
	\begin{proposition}\label{prop.OnestepDecay}
		For the gap process $v_t$ and every $t\in[t_n,t_{n+1}),n \in \N_+$, the following inequality holds:
		\begin{equation}\label{eq.OnestepDecay}
			(1+t)^{\frac{d+2}{2}}\expec{v_t^2}_\rho-(1+t_{n})^{\frac{d+2}{2}}\expec{v_{t_{n}}^2}_\rho\leq Ct_n^{1-\varepsilon}\Ll(\log t_{n}\Rr)^{d+1},
		\end{equation}
		where the constant $C$ depends only on $u,d$ and $\theta$.
	\end{proposition}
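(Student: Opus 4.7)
On the interval $t \in [t_n, t_{n+1})$, the scale $K(t) = K_n$ is constant, so $v_t = (I - R_{K_n})u_t$. Since every translation $\tau_x$ commutes with $P_t$, the gap evolves as $\partial_t v_t = \L v_t$, yielding $\partial_t \expec{v_t^2}_\rho = -2\expec{v_t(-\L)v_t}_\rho$. Setting $\Phi(t) := (1+t)^{(d+2)/2}\expec{v_t^2}_\rho$, differentiation gives
\begin{equation*}
\Phi'(t) = \tfrac{d+2}{2}(1+t)^{d/2}\expec{v_t^2}_\rho - 2(1+t)^{(d+2)/2}\expec{v_t(-\L)v_t}_\rho,
\end{equation*}
so the claim reduces to bounding $\int_{t_n}^t\Phi'(s)\,ds$ by $Ct_n^{1-\varepsilon}(\log t_n)^{d+1}$.

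The central input is a Poincar\'e-type estimate for $v_s$, exploiting that $v_s$ is the deviation of $u_s$ from its spatial average over translations in $\La_{K_n}$. Following the multiscale spectral-gap decomposition of \cite[Section~3]{jlqy}, I would write $\expec{v_s^2}_\rho$ as a telescoping martingale sum along dyadic scales $L = 2^k$, $k = 0, \dots, \lceil \log_2 K_n \rceil$, with respect to the $\sigma$-algebras generated by block particle numbers. At each scale $L$, the classical spectral gap for SSEP on a cube of side $L$ yields a Poincar\'e constant $O(L^2)$; equivalence of ensembles and the local CLT for the Bernoulli product measure control the conditional expectations by $\chi(\rho)/L^d$. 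Passing between $\expec{v_s(-\L)v_s}_\rho$ and the SSEP Dirichlet form via \eqref{eq.EnergyLbarLower}--\eqref{eq.EnergyLbarUpper} of Corollary~\ref{cor.CovSEP} at the price of an absolute constant, and summing over the $O(\log K_n)$ scales, yields a bound of the form
\begin{equation*}
\expec{v_s^2}_\rho \le C_1 K_n^2\, \expec{v_s(-\L)v_s}_\rho + C_2(\log K_n)^{d+1} K_n^{-d}\, \var_\rho[u_s].
\end{equation*}

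Inserting this into $\Phi'(s)$ and using $C_1 K_n^2 \le Ct_n^{1-\varepsilon} \ll s$ for $s \ge t_n \ge t_0$, the first term is absorbed by the Dirichlet form contribution $-2(1+s)^{(d+2)/2}\expec{v_s(-\L)v_s}_\rho$ in $\Phi'$. What remains is the error
\begin{equation*}
C(\log K_n)^{d+1} K_n^{-d} \int_{t_n}^t (1+s)^{d/2}\,\var_\rho[u_s]\,ds.
\end{equation*}
Using the coarse decay $\var_\rho[u_s] \le C_u s^{-(d-\epsilon)/2}$ (which follows from \eqref{eq.ccr} of Cancrini--Cesi--Roberto, transferred to the present setting via Corollary~\ref{cor.CovSEP}), and $K_n^{-d} \sim t_n^{-(1-\varepsilon)d/2}$, $\log K_n \sim \log t_n$, a direct computation gives the claimed bound $Ct_n^{1-\varepsilon}(\log t_n)^{d+1}$ under the choice of parameters \eqref{eq.parameters_choice}.

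The main obstacle is establishing the multiscale Poincar\'e estimate in the non-gradient setting. The argument in \cite{jlqy} exploits the gradient structure of the zero-range process, which is not available here; Corollary~\ref{cor.CovSEP} bridges this gap by comparing the non-gradient Dirichlet form with that of an auxiliary SSEP, to which the classical cube-by-cube spectral gap applies. Carefully controlling the conditional expectations at each scale via equivalence of ensembles and the local CLT, which is responsible for the logarithmic factor $(\log t_n)^{d+1}$, is the most technical step of the argument.
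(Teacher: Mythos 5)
The skeleton of your argument (differentiating $(1+t)^{\frac{d+2}{2}}\expec{v_t^2}_\rho$ and absorbing a Poincar\'e term into the Dirichlet form) matches the paper, but the step that carries all the weight is asserted, not proved, and I do not believe it can be proved by the static tools you name. Your proposed inequality
\begin{equation*}
\expec{v_s^2}_\rho \le C_1 K_n^2\, \expec{v_s(-\L)v_s}_\rho + C_2(\log K_n)^{d+1} K_n^{-d}\, \var_\rho[u_s]
\end{equation*}
requires, after the block spectral gap, a bound on the projection of $v_s$ onto functions of the block particle numbers, and there is no reason this projection is controlled \emph{pointwise in $s$} by $K_n^{-d}\var_\rho[u_s]$ up to logarithms: $u_s=P_su$ is spread over the scale $\sqrt{s}\gg K_n$, and the conditional expectation of $u_s-\tau_y u_s$ given coarse block counts is exactly the term that static arguments (spectral gap, equivalence of ensembles, local CLT) cannot see. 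The paper controls it only after time integration and by a change of viewpoint: the conditional expectation $B_{\ell,L}\tau_x P_s u$ is rewritten through the Radon--Nikodym density $f^M$ of the measure conditioned on the block counts $\mathbf{M}=M$, each term $\expec{\Delta_e u\, f^M_{-x,s}}_\rho^2$ is bounded by the $H^{-1}$-type estimate of Lemma~\ref{lem.H^-1} (a log-Sobolev/Herbst argument) in terms of $\sE_{\rho}(\sqrt{f^M_s})$, and then the entropy-production inequality $\int_{t_n}^{t}4\sE_\rho(\sqrt{f^M_s})\,\d s\le \operatorname{Ent}[f^M]\le\log\bigl(1/\P_\rho(\mathbf{M}=M)\bigr)$ is used; averaging over $M$ gives $\log|\mathbb{M}_q|\le C(L/\ell)^d\log\ell\le C(\log t_n)^{d+1}$, with $L\simeq\sqrt{t}\log t$ from the cutoff Lemma~\ref{lem.Cutoff} and block size $\ell\simeq\sqrt{t_n}$ (not dyadic scales up to $K_n$). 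So the logarithm comes from the entropy of the block-count partition, not from a local CLT, and the bound is intrinsically time-integrated; no pointwise-in-time Poincar\'e estimate of your form is established or needed.

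Two further points. First, the paper also averages over spatial translations $\tau_x$ with $|x|\le\sqrt{t_n}$ and telescopes $v_s$ along canonical paths into bond increments $\Delta_e u_s$; this averaging produces the factor $\ell^{-d}$ that cancels $(1+s)^{d/2}$, so no a priori decay of $\var_\rho[u_s]$ is needed anywhere in the proof. Your route instead imports the Cancrini--Cesi--Roberto bound \eqref{eq.ccr} as a black box; even granting that their mixing hypotheses cover this model, this makes the argument depend on an external result the paper deliberately avoids, and it only becomes relevant after the unproved Poincar\'e-type inequality, which is the genuine gap. Second, your claim that the regularization argument of \cite{jlqy} ``exploits the gradient structure of the zero-range process'' is inaccurate: that part of \cite{jlqy} is robust (it was already adapted to non-gradient settings in \cite{ccr,gu2020decay}); the gradient structure enters \cite{jlqy} only in identifying the leading constant, and Corollary~\ref{cor.CovSEP} is not what bridges the non-gradient difficulty in this proposition.
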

	We postpone the proof of Proposition~\ref{prop.OnestepDecay} to Section~\ref{subsec.recapJLQY}, which follows that of \cite[Proposition~2.2]{jlqy}. However, as $K(t)$ jumps at $(t_n)_{n \in \N}$, one should pay attention when iterating \eqref{eq.OnestepDecay} for $t \in [0,\infty)$. The previous work omitted it, so we fix this tiny gap in the following paragraph. 
	\begin{proof}[Proof of Proposition~\ref{prop.RegulEff}]
		We aim to establish \eqref{eq.RegulEff} for $(t_n)_{n \in \N}$ at first. The proof can be divided into 3 steps and we use the choice of parameters \eqref{eq.parameters_choice}.
		
		\textit{Step~1: a preliminary version of iteration.}
		For $t \in [t_n, t_{n+1})$, we recall the definition \eqref{eq.nt} that $K(t)=K_n$, then \eqref{eq.OnestepDecay} yields
		\begin{align*}
			(1+t)^{\frac{d+2}{2}}\expec{(u_{t}-R_{K_n}u_{t})^2}_\rho-(1+t_{n})^{\frac{d+2}{2}}\expec{v_{t_{n}}^2}_\rho\leq Ct_n^{1-\varepsilon}\Ll(\log t_{n}\Rr)^{d+1}.
		\end{align*}
		Since the mapping $t \mapsto \norm{u_t}_{L^2}$ is continuous, we send $t$ to $t_{n+1}$ and obtain
		\begin{equation}\label{eq.Onestep}
			(1+t_{n+1})^{\frac{d+2}{2}}\big\langle(\underbrace{u_{t_{n+1}}-R_{K_n}u_{t_{n+1}}}_{=:\tilde{v}_{t_{n+1}}})^2\big\rangle_\rho-(1+t_{n})^{\frac{d+2}{2}}\expec{v_{t_{n}}^2}_\rho\leq Ct_n^{1-\varepsilon}\Ll(\log t_{n}\Rr)^{d+1}.
		\end{equation}
		We also define a variant function $\tilde{v}_{t_{n+1}}$ above. It is different from $v_{t_{n+1}}$, whose regularization has a scale $K_{n+1}$. We aim to estimate the difference between the two functions.
		
		\smallskip
		
		\textit{Step~2: a priori bound.} We show there exists a constant $C$ depending on $u$, such that
		\begin{equation}\label{eq.priorvardecayaux}
			\forall t \geq 0, \qquad (1+t)^{\frac{d}{2}}\expec{v_t^2}_\rho\leq C.
		\end{equation}
		By \eqref{eq.Onestep}, we obtain
		\begin{equation}\label{eq.OnestepTilde}
			(1+t_{n+1})^{\frac{d+2}{2}}\expec{(\tilde{v}_{t_{n+1}})^2}_\rho-(1+t_n)^{\frac{d+2}{2}}\expec{(v_{t_n})^2}_\rho\leq C t_n.
		\end{equation}
		By Cauchy--Schwarz inequality, we can give an upper bound for $v_{t_{n+1}}$:
		\begin{align*}
			\expec{(v_{t_{n+1}})^2}_\rho&=\expec{(u_{t_{n+1}}-R_{K_{n+1}}u_{t_{n+1}})^2}_\rho\\
			&=\expec{\Ll( (u_{t_{n+1}}-R_{K_{n}}u_{t_{n+1}}) + (R_{K_{n}}u_{t_{n+1}}-\bracket{u}_\rho) + (\bracket{u}_\rho- R_{K_{n+1}}u_{t_{n+1}})\Rr)^2}_\rho\\ 
			&\leq 3\expec{(\tilde{v}_{t_{n+1}})^2}_\rho+3\expec{(R_{K_n}u_{t_{n+1}}-\expec{u}_\rho)^2}_\rho + 3\expec{(R_{K_{n+1}}u_{t_{n+1}}-\expec{u}_\rho)^2}_\rho.
		\end{align*}
		By Proposition~\ref{prop.DecayPRegul}, we can derive
		\begin{align*}
			\expec{(R_{K_{n+1}}u_{t_{n+1}}-\expec{u}_\rho)^2}_\rho &=\var_{\rho}[P_{t_{n+1}}R_{K_{n+1}}u]\leq C (t_{n})^{-\frac{d}{2}}.
		\end{align*}
		A similar estimate applies to $R_{K_n}u_{t_{n+1}}$. We use the decay property of the semigroup $P_t$ at first, and then apply Proposition~\ref{prop.DecayPRegul} to obtain 
		\begin{align*}
			\expec{(R_{K_n}u_{t_{n+1}}-\expec{u}_\rho)^2}_\rho &=\var_\rho[P_{t_{n+1}}R_{K_n}u]\\
			&\leq \var_\rho[P_{t_{n}}R_{K_n}u] \\
			&\leq C (t_{n})^{-\frac{d}{2}}.
		\end{align*}
		Combining the above estimates, we obtain
		\begin{align*}
			\expec{(v_{t_{n+1}})^2}_\rho \leq 3\expec{(\tilde{v}_{t_{n+1}})^2}_\rho + C (t_{n})^{-\frac{d}{2}}.
		\end{align*}
		Insert \eqref{eq.OnestepTilde} in this estimate, we get 
		\begin{equation}
			(1+t_{n+1})^{\frac{d+2}{2}}\expec{(v_{t_{n+1}})^2}_{\rho}\leq 3(1+t_{n})^{\frac{d+2}{2}}\expec{(v_{t_n})^2}_\rho+C t_{n+1},
		\end{equation}
		which implies
		\begin{align}\label{eq.onestepiter}
			\expec{(v_{t_{n+1}})^2}_{\rho}\leq 3\Ll(\frac{1+t_{n}}{1+t_{n+1}}\Rr)^{\frac{d+2}{2}}\expec{(v_{t_n})^2}_\rho+C (t_{n+1})^{-\frac{d}{2}}.
		\end{align}
		Since we choose $\theta > 100$ (see \eqref{eq.parameters_reg}), an iteration of \eqref{eq.onestepiter} yields estimate \eqref{eq.priorvardecayaux}. 
		
		\smallskip
		\textit{Step~3: a refined bound.}
		The difference between $v_{t_{n+1}}$ and $\tilde{v}_{t_{n+1}}$ actually has a better estimate. Recall that we have the gap estimate \eqref{eq.AveGapDecay} in Corollary~\ref{cor.AveGapDecay}, which gives us 
		\begin{equation}\label{eq.vvsquare}
			\bracket{(v_{t_{n+1}}-\tilde{v}_{t_{n+1}})^2}_\rho = \expec{(R_{K_{n}}u_{t_{n+1}}-R_{K_{n+1}}u_{t_{n+1}})^2}_\rho\leq C (t_{n+1})^{-\frac{d+4\delta}{2}}.
		\end{equation}
		We can then refine the estimate of second moment
		\begin{equation}\label{eq.accugapes}
			\begin{split}
				\Ll|\expec{(v_{t_{n+1}})^2}_\rho-\expec{(\tilde{v}_{t_{n+1}})^2}_\rho\Rr|			&=\Ll|\bracket{(v_{t_{n+1}}-\tilde{v}_{t_{n+1}})(v_{t_{n+1}}+\tilde{v}_{t_{n+1}})}_\rho\Rr|\\
				&\leq \Ll(\bracket{(v_{t_{n+1}}-\tilde{v}_{t_{n+1}})^2}_\rho\bracket{(v_{t_{n+1}}+\tilde{v}_{t_{n+1}})^2}_\rho\Rr)^{\frac{1}{2}}\\
				&\leq C (t_{n+1})^{-\frac{d+2\delta}{2}}.
			\end{split}
		\end{equation}
		In the second line, the first term relies on the estimate \eqref{eq.vvsquare}, while the second term still needs the \textit{a priori} bound \eqref{eq.priorvardecayaux}.
		
		Combining \eqref{eq.OnestepTilde} and \eqref{eq.accugapes}, we obtain the relation
		\begin{equation}\label{eq.accuonestepiter}
			(1+t_{n+1})^{\frac{d+2}{2}}\expec{(v_{t_{n+1}})^2}_\rho\leq (1+t_{n})^{\frac{d+2}{2}}\expec{(v_{t_n})^2}_\rho+C(t_{n+1})^{1-\delta}
		\end{equation}
		An iteration of \eqref{eq.accuonestepiter} yields estimate \eqref{eq.RegulEff} at $(t_n)_{n \in \N}$. Then with the help of  \eqref{eq.OnestepDecay}, we obtain the estimate \eqref{eq.RegulEff} for all $t \in \R_+$.
	\end{proof}
	
	\subsection{Proof of Proposition~\ref{prop.OnestepDecay}}\label{subsec.recapJLQY}
	In this subsection, we follow the steps in \cite[Sections~3,4]{jlqy} to justify Proposition~\ref{prop.OnestepDecay}, which relies on several lemmas. Since these lemmas are generally robust, we omit their proofs, but recall the statement and reference.
	
	The first lemma is a cutoff estimate \cite[Proposition~3.1]{jlqy}. Roughly, its says the information in $c \log t \sqrt{t}$ is enough to capture the diffusive behavior in $[0,t]$. One can also find its proof in \cite[Section~6]{jlqy}. Its adaptation in \cite[Proposition 3.1]{ccr} and \cite[Theorem~5.1]{gu2020decay} cover the setting in this paper. In the statement, we denote by $A_L F$ the conditional expectation of a function in $L^1(\X, \fil, \Pr)$  given $\fil_{\La_L}$ with $L\in \N_+$ 
	\begin{equation}\label{eq.defAL}
		A_L F:=\E_\rho[F|\fil_{\La_L}].
	\end{equation}
	\begin{lemma}[Cutoff estimate]\label{lem.Cutoff}
		There exists a finite positive constant $C(d,\lambda, \r)$ such that, for all local functions $F$, for every $t\geq 1$ satisfying $F \in \F_0(\Lambda_{\lfloor 3\sqrt{t} \rfloor})$ and every $L\in \N_+$, we have
		\begin{equation*}
			\bracket{(P_t F - A_L P_t F)^2}\leq C e^{-\frac{L}{\sqrt{t}}}\expec{F^2}_\rho.
		\end{equation*}
	\end{lemma}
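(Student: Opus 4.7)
My plan is a finite-propagation-speed argument for the non-gradient semigroup $P_t$, following the general strategy of \cite[Proposition~3.1]{jlqy}, \cite[Proposition~3.1]{ccr}, and \cite[Theorem~5.1]{gu2020decay}, and executed through the chaos expansion from Section~\ref{subsec.Fock}. I would split into two regimes. When $L \leq C_0 \sqrt{t}$, the target bound $Ce^{-L/\sqrt{t}}\bracket{F^2}_\rho$ is no smaller than a constant times $\bracket{F^2}_\rho$, and it suffices to use that $A_L$ is an orthogonal projection in $L^2(\X,\fil,\Pr)$ and that $P_t$ is an $L^2$-contraction to conclude $\bracket{(P_tF - A_LP_tF)^2}_\rho \leq \bracket{F^2}_\rho$. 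This reduces the problem to $L \geq C_0 \sqrt{t}$, where it suffices to establish the stronger Gaussian bound $\bracket{(P_tF - A_LP_tF)^2}_\rho \leq C e^{-cL^2/t} \bracket{F^2}_\rho$, since this implies the claimed $e^{-L/\sqrt{t}}$ decay once $C_0$ is large.

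For the Gaussian bound, I would introduce the weight $\phi_\mu(x) := \exp(\mu \vert x\vert_\infty)$ with $\mu > 0$ to be optimized, extended to finite subsets by $\Phi_\mu(Y) := \max_{x \in Y} \phi_\mu(x)$. By the chaos expansion (Lemma~\ref{lem.Chaos}) together with the orthogonality relation \eqref{eq.Ortho}, conditioning on $\fil_{\La_L}$ kills exactly the coefficients indexed by $Y \not\subset \La_L$, so
\[
\bracket{(P_tF - A_LP_tF)^2}_\rho = \sum_{n \geq 1} \chi(\rho)^n \sum_{Y \in \K_n,\, Y \not\subset \La_L} \vert T_n(P_tF)(Y)\vert^2.
\]
Every such $Y$ satisfies $\Phi_\mu(Y) \geq e^{\mu L}$, so the right-hand side is bounded by $e^{-\mu L}\Xi_\mu(t)$, where
\[
\Xi_\mu(s) := \sum_{n \geq 1} \chi(\rho)^n \sum_{Y \in \K_n} \Phi_\mu(Y)\,\vert T_n(P_s F)(Y)\vert^2.
\]
Since $F \in \F_0(\La_{\lfloor 3\sqrt{t}\rfloor})$, the initial datum obeys $\Xi_\mu(0) \leq e^{3\mu\sqrt{t}}\bracket{F^2}_\rho$. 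The core of the proof is then a weighted energy inequality
\[
\partial_s \Xi_\mu(s) \leq C\mu^2 \Xi_\mu(s),
\]
from which Gronwall gives $\Xi_\mu(t) \leq e^{C\mu^2 t + 3\mu\sqrt{t}}\bracket{F^2}_\rho$, and optimizing $\mu \sim L/t$ produces the Gaussian decay, with the linear $3\mu\sqrt{t}$ term absorbed by the $-\mu L$ term when $L \geq C_0 \sqrt{t}$.

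The main obstacle is proving the weighted energy inequality $\partial_s \Xi_\mu(s) \leq C\mu^2 \Xi_\mu(s)$. In contrast with Lemma~\ref{lem.eqKolmogrov}, the non-gradient generator $\L = \sum_b c_b \pi_b$ does not preserve the Fock decomposition, so the coefficients $T_n(P_sF)$ satisfy an infinite coupled system rather than closed discrete heat equations. My approach is to compute $\partial_s \Xi_\mu(s)$ directly as a bilinear form involving $\L P_s F$, and to track how the weight $\Phi_\mu$ interacts with each Kawasaki operator $\pi_b$. The key technical point is that $\Phi_\mu$ changes by a factor in $[e^{-\mu}, e^{\mu}]$ across any bond of unit length, so symmetrizing the bilinear form using the reversibility $c_b = c_{-b}$ and the self-adjointness of $\pi_b$ cancels the first-order $O(\mu)$ term and leaves a quadratic Taylor remainder of size $O(\mu^2)$. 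Combined with the uniform ellipticity $c_b \in [1,\lambda]$ and the finite range $\r$, this produces a constant depending only on $d, \lambda, \r$. This mirrors the argument in \cite[Section~6]{jlqy} and \cite[Theorem~5.1]{gu2020decay}, which must be adapted to the non-gradient jump rates of Hypothesis~\ref{hyp}.
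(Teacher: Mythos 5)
Your setup is sound and is in fact the standard Davies-type, diffusive-scale ``finite speed of propagation'' strategy: the observation that $A_L$ acts on the chaos expansion by discarding exactly the coefficients with $Y\not\subset\La_L$ (via Lemma~\ref{lem.Chaos} and \eqref{eq.Ortho}) is correct, as are the bounds $\Phi_\mu(Y)\ge e^{\mu L}$ for $Y\not\subset\La_L$ and $\Xi_\mu(0)\le e^{3\mu\sqrt{t}}\bracket{F^2}_\rho$. Note, however, that the paper does not reprove this lemma at all: it imports it from \cite{jlqy} (Proposition~3.1, proof in Section~6), \cite{ccr} (Proposition~3.1) and \cite{gu2020decay} (Theorem~5.1), so your proposal is an attempt at an actual proof rather than a variant of the paper's argument. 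Two remarks on the soft part: since the target is only $e^{-L/\sqrt{t}}$, you never need the Gaussian regime — taking $\mu=1/\sqrt{t}\le 1$ for every $L$ gives $e^{-\mu L}e^{C\mu^2 t+3\mu\sqrt{t}}\le Ce^{-L/\sqrt{t}}$ directly; this also avoids the fact that a differential inequality with rate $C\mu^2$ cannot hold for large $\mu$ (for jump dynamics the rate grows like $e^{c\mu}$), so your optimization $\mu\sim L/t$ is not legitimate in the regime $L\gg t$ as written.

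The genuine gap is the core inequality $\partial_s\Xi_\mu(s)\le C\mu^2\Xi_\mu(s)$, which is where all the work of the cited proofs lives and which your sketch does not actually address. Your weight $W_\mu$ (multiplying the $Y$-th chaos coefficient by $\Phi_\mu(Y)$) is diagonal in the Fock decomposition but is \emph{not} a multiplication operator on configuration space, whereas the non-gradient part of the generator, multiplication by $c_b(\eta)$, is a multiplication operator that does \emph{not} preserve the chaos decomposition. The classical algebra behind ``symmetrize and the first-order term cancels'' is an identity for multiplication weights, applied bond by bond, namely $(\sqrt{w_1}u_1-\sqrt{w_2}u_2)^2-(w_1u_1-w_2u_2)(u_1-u_2)=u_1u_2(\sqrt{w_1}-\sqrt{w_2})^2$; it is not available here. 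Writing $\partial_s\Xi_\mu=-\sum_b\bracket{c_b\,\pi_b(W_\mu G_s)\,\pi_b G_s}_\rho$ and expanding in chaos, the fluctuating part $c_b-\bracket{c_b}_\rho$ couples coefficients indexed by distinct $Y,Z$ differing within distance $\r$ of $b$; these off-diagonal terms are of first order in $\mu$, not second, and must be absorbed into the negative weighted Dirichlet form. The delicate point, which your sketch never confronts, is that this absorption must produce a constant uniform in the chaos order $n$: a naive Cauchy--Schwarz bookkeeping charges each $Y$ once for every bond within distance $\r$ of one of its points, i.e.\ $O(n)$ times, and yields only $\partial_s\Xi_\mu\le C\mu^2\,n\,\Xi_\mu$; since $F$ may have chaos components of order up to $n\sim|\La_{\lfloor 3\sqrt{t}\rfloor}|\sim t^{d/2}$, such a Gr\"onwall constant destroys the estimate (this issue is intrinsic to the max-weight even for constant rates). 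You also need an a priori justification that $\Xi_\mu(s)<\infty$ for $s>0$ (e.g.\ by truncating the weight and passing to the limit) before differentiating. So as it stands the decisive step is asserted rather than proved; either carry out the weighted estimate in full — which essentially amounts to redoing \cite[Proposition~3.1]{ccr} or \cite[Theorem~5.1]{gu2020decay} for the rates of Hypothesis~\ref{hyp} — or do what the paper does and simply quote those results.
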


	The second lemma is a spectral gap inequality. We call $(\ell, L)$ a ``good pair'' if $\ell$ is a mesoscopical  scale such that $q := \frac{2L+1}{2\ell+1} \in \N_+$. Then we consider an enumeration of the set 
	\begin{align}\label{eq.enumeration}
		(2\ell+1)\Zd \cap \Lambda_L :=\{x_1,x_2,\cdots, x_q\},
	\end{align}
	such that $|x_j|\leq |x_k|$ for $j\leq k$. We let the random variable $\mathbf{M}_j(\eta)$ stand for the total number of particles in $\La_\ell(x_j) = x_j+\La_\ell$ 
	\begin{equation}\label{eq.defNj}
		\mathbf{M}_j(\eta):=\sum_{x\in\La_\ell(x_j)}\eta_x,
	\end{equation}
	and also let $\mathbf{M}$ stand for the vector
	\begin{align}\label{eq.defN}
		\mathbf{M} := (\mathbf{M}_1,\cdots,\mathbf{M}_q).
	\end{align}
	Given a function $F$ in $L^1(\X, \fil_{\Lambda_L}, \Pr)$, we denote by $B_{\ell,L} F$ its conditional expectation  given $\mathbf{M}$:
	\begin{equation}\label{eq.defBL}
		B_{\ell,L}F=\E_\rho[F|\mathbf{M}].
	\end{equation}
	\begin{lemma}[Spectral gap inequality]\label{lem.SpectGap}
		There exists a finite positive constant $C_{\ref{lem.SpectGap}}(d)$ such that for every good pair $(\ell, L)$ and $F\in L^2(\X, \fil, \Pr)$, the following estimate holds:
		\begin{equation*}
			\E_{\rho,\La_L}[(F-B_{\ell,L}F)^2|\mathbf{M}]\leq C_{\ref{lem.SpectGap}} \ell^2 \sum_{b \in \Lambda_L^*}\E_{\rho,\La_L}\Ll[ (\pi_b F)^2|\mathbf{M}\Rr].
		\end{equation*}
	\end{lemma}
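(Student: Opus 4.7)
The approach is to exploit the tensorization that arises once we condition on $\mathbf{M}$: since $(\ell, L)$ is a good pair, the family $\{\La_\ell(x_j)\}_{j}$ partitions $\La_L$ exactly, and under $\P_{\rho, \La_L}$ conditioned on $\mathbf{M}$ the restrictions of $\eta$ to distinct boxes are independent. Moreover, within each box $\La_\ell(x_j)$ the conditional law is the canonical ensemble with $\mathbf{M}_j$ particles placed uniformly at random. This decoupling is a direct consequence of the product structure of $\P_{\rho, \La_L}$ together with the fact that $\sigma(\mathbf{M})$ factorizes across the boxes.

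Given this product structure, I would apply the classical tensorization inequality for the variance under a product measure: writing $Z_j$ for the configuration in $\La_\ell(x_j)$, this yields
\begin{equation*}
\E_{\rho}\bigl[(F - B_{\ell,L}F)^2 \,\big|\, \mathbf{M}\bigr] \;\leq\; \sum_{j} \E_{\rho}\bigl[\,\var_{\La_\ell(x_j), \mathbf{M}_j}(F) \,\big|\, \mathbf{M}\bigr],
\end{equation*}
where $\var_{\La_\ell(x_j), \mathbf{M}_j}(F)$ is the variance of $F$ under the canonical ensemble on $\La_\ell(x_j)$ with $\mathbf{M}_j$ particles, regarding the configuration outside $\La_\ell(x_j)$ as frozen. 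Next, the classical single-box spectral gap for the symmetric exclusion process under the canonical ensemble, already cited in \cite[Lemma~2.4]{funaki2024quantitative}, provides a dimensional constant $C(d)$ such that
\begin{equation*}
\var_{\La_\ell(x_j), \mathbf{M}_j}(F) \;\leq\; C\, \ell^2 \sum_{b \in \La_\ell(x_j)^*} \E_{\La_\ell(x_j), \mathbf{M}_j}[(\pi_b F)^2]
\end{equation*}
uniformly in $\mathbf{M}_j$. Substituting, taking conditional expectation, and using $\bigcup_{j} \La_\ell(x_j)^* \subset \La_L^*$ together with the non-negativity of each $(\pi_b F)^2$ yields the claim.

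The argument is essentially mechanical, and the only real analytic input is the single-box canonical spectral gap. The main point to verify is that $\P_{\rho, \La_L}(\cdot \,|\, \mathbf{M})$ genuinely factorizes across the mesoscopic boxes; this is precisely where the good-pair condition on $(\ell, L)$ is used, as it ensures the tiling of $\La_L$ is exact, so that $\mathbf{M}$ is a tuple of functions of disjoint blocks of the configuration. Note also that the bonds joining different mesoscopic boxes (which do not appear in the single-box estimates) are simply discarded via positivity, and their absence is harmless since the inequality allows the full sum over $\La_L^*$ on the right-hand side.
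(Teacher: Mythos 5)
Your proposal is correct and follows essentially the same route as the paper, which proves the lemma by tensorizing the single-box canonical-ensemble spectral gap of Lu--Yau over the exact tiling of $\La_L$ by the boxes $\La_\ell(x_j)$ (see \cite[Theorem~3.2]{jlqy}); conditioning on $\mathbf{M}$ yields the product structure, and the cross-box bonds are discarded by positivity exactly as you describe. The only cosmetic difference is the reference for the single-box gap, which the paper takes from \cite{LuYau} rather than \cite{funaki2024quantitative}.
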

	Its proof is just a tensorization of the spectral gap inequality in \cite{LuYau} and the constant $C_{\ref{lem.SpectGap}}$ inherits; see \cite[Theorem~3.2]{jlqy}.
	


	The last lemma is a $H^{-1}$ type estimate for a local centered function and its proof can be found in \cite[Lemma 4.3]{jlqy}. For an oriented edge $e=(x,y)$ and a function $F$ on configuration space, we define that
	\begin{equation}\label{eq.defDelta_e}
		\Delta_e F:=\tau_{y}F-\tau_{x}F.
	\end{equation}
	\begin{lemma}\label{lem.H^-1}
		Given a local function $u$. There exists a constant $C=C(u,\rho)$, such that the following estimate holds for every nearest oriented edge $e=(x,y)$ in $\Zd$ and every non-negative function $f$
		\begin{equation*}
			\expec{\Delta_e u\  f}^2_\rho\leq C\expec{\sE_{\rho,\La_{\ell_u+1}(e)}\Ll(\sqrt{f}\Rr)}_\rho\expec{f}_\rho.
		\end{equation*}
		Here we keep the convention $\La_{\ell_u}(e):=\La_{\ell_u}(x)$ for $e=(x,y)$.
	\end{lemma}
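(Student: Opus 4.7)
The plan is to run a standard $H^{-1}$-duality argument localized to the box $\La := \La_{\ell_u+1}(e)$ around the edge $e=(x,y)$. First, I would record that since $u \in \F_0(\La_{\ell_u})$, both translates $\tau_x u$ and $\tau_y u$ are $\fil_\La$-measurable, so $\Delta_e u$ is bounded and $\fil_\La$-measurable. The canonical measure $\P_{\rho,\La,N}$ is invariant under arbitrary permutations of the sites of $\La$, so site-exchangeability gives $\E_{\rho,\La,N}[\tau_x u] = \E_{\rho,\La,N}[\tau_y u]$, and therefore $\E_{\rho,\La,N}[\Delta_e u] = 0$ for every admissible $N$.

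Second, I would invert the local generator $\L_\La := \sum_{b \in \La^*} c_b \pi_b$ on each canonical ensemble. Since this ensemble is a finite state space on which $\L_\La$ is symmetric with kernel equal to the constants, the Poisson equation $-\L_\La g = \Delta_e u$ admits a unique mean-zero solution; gluing across the values of $N_\La$ produces an $\fil_\La$-measurable function $g$ with $\|g\|_\infty \leq C(u,\rho)$. The uniform bound rests on $|\La|$ depending only on $\ell_u$ and on the canonical spectral gap being bounded below (via Lemma~\ref{lem.SpectGap} applied to a single block), so the $L^\infty$-control follows from finite-dimensional linear algebra.

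With $g$ in hand, integration by parts together with the pointwise identity $\pi_b f = (\sqrt{f(\eta^b)}+\sqrt{f(\eta)})\,\pi_b\sqrt{f}$ and Cauchy--Schwarz yield
\begin{align*}
\vert \expec{\Delta_e u\, f}_\rho\vert^2
&= \tfrac{1}{4}\Ll| \sum_{b\in\La^*}\expec{c_b(\pi_b g)(\pi_b f)}_\rho\Rr|^2 \\
&\leq \tfrac{1}{4}\Ll(\sum_{b\in\La^*}\expec{c_b(\pi_b g)^2(\sqrt{f(\eta^b)}+\sqrt{f(\eta)})^2}_\rho\Rr)\Ll(\sum_{b\in\La^*}\expec{c_b(\pi_b\sqrt{f})^2}_\rho\Rr).
\end{align*}
The second factor equals $2\expec{\sE_{\rho,\La}(\sqrt{f})}_\rho$ by the tower property averaging canonical to grand canonical. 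For the first factor, the reversibility identity $\expec{c_b(\pi_b g)^2 f(\eta^b)}_\rho = \expec{c_b(\pi_b g)^2 f}_\rho$, combined with $(\sqrt{a}+\sqrt{b})^2 \leq 2(a+b)$, the ellipticity $c_b \leq \lambda$, and the bound $\sum_{b\in\La^*}\|\pi_b g\|_\infty^2 \leq 4|\La^*|\,\|g\|_\infty^2 \leq C(u,\rho)$, reduces it to $C(u,\rho)\expec{f}_\rho$. Combining the two factors gives the claim.

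The decisive technical point is the uniform $L^\infty$-bound $\|g\|_\infty \leq C(u,\rho)$ on the Poisson solution: without $\La$ being of a size controlled by $u$, this constant would degrade with the system. Everything else is standard Hilbert-space algebra adapted to the reversible Kawasaki dynamics.
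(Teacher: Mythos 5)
Your proposal is correct in substance but follows a genuinely different route from the paper's. The paper (deferring to Lemma~4.3 of \cite{jlqy}) conditions on the canonical ensemble in a fixed box around $e$, applies the entropy inequality together with the logarithmic Sobolev inequality for Kawasaki dynamics in finite volume and the Herbst argument, and then optimizes the entropy parameter; the only structural ingredient it shares with your argument is the cancellation $\E[\Delta_e u\,|\,\text{canonical ensemble}]=0$, which in both proofs comes from exchangeability of the canonical measure. You instead solve a localized Poisson equation $-\L_\La g=\Delta_e u$ sector by sector, and conclude via integration by parts, the factorization $\pi_b f=(\sqrt{f\circ\eta^b}+\sqrt{f}\,)\,\pi_b\sqrt{f}$, reversibility and Cauchy--Schwarz. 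Your route is more elementary: it needs only the canonical Poincar\'e inequality plus finite-dimensional linear algebra for the $L^\infty$ bound on $g$, rather than log-Sobolev, and yields the same product structure $\expec{\sE_{\rho,\La_{\ell_u+1}(e)}(\sqrt f)}_\rho\expec{f}_\rho$ with constants depending only on $d,\ell_u,\lambda,\r$; the entropy/LSI route, by contrast, avoids constructing the resolvent and travels better to settings where a uniform sup-norm bound on the Poisson solution is less accessible. One inaccuracy you should repair: since $c_b$ for bonds within distance $\r$ of $\partial\La$ depends on sites outside $\La$, the operator $\L_\La=\sum_{b\in\La^*}c_b\pi_b$ does not map $\fil_\La$-measurable functions to $\fil_\La$-measurable functions, so the Poisson solution $g$ is in general only measurable with respect to the fattened box $\La_{\ell_u+1+\r}(e)$ (solve with frozen exterior; the spectral gap and the atom lower bound are uniform over the finitely many boundary configurations, so $\norm{g}_{L^\infty}\leq C(u)$ survives), or alternatively run the same scheme with the SSEP localization $\sum_{b\in\La^*}\pi_b$, whose Poisson solution is genuinely $\fil_\La$-measurable, and invoke $1\leq c_b\leq\lambda$ at the Cauchy--Schwarz step. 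Either repair leaves your final estimate, with the Dirichlet form over the bonds of $\La_{\ell_u+1}(e)$, unchanged, so the claimed inequality follows.
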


	Now we present the proof. Throughout the proof, we will also use a Dirichlet form associated to $\L$ on $\Lambda \subset \Zd$ 
	\begin{align}\label{eq.Dirichlet}
		\sE_{\rho, \Lambda}(F) := \frac{1}{2} \sum_{b \in \Lambda^*} \bracket{c_b (\pi_b F)^2}_{\rho, \Lambda}.
	\end{align}
	Especially, we will use $\sE_{\rho}(F) \equiv \sE_{\rho, \Zd}(F)$ as a shorthand notation, which coincides with $\bracket{F(-\L F)}_\rho$.  We assume that the parameter $\theta$ satisfies
	\begin{align}\label{eq.conditionTheta}
		\theta := \max\{100, C_{\ref{lem.SpectGap}}\}.
	\end{align} 	
	
	\begin{proof}[Proof of Proposition~\ref{prop.OnestepDecay}]
		Notice that $\expec{v_t}_\rho=0$, we have $\var_\rho[v_t]=\expec{v_t^2}_\rho$. For every $t\in [t_n,t_{n+1})$, by differentiation we obtain
		\begin{multline}\label{eq.OnestepDecay_Derivative}
			(1+t)^{\frac{d+2}{2}}\expec{v_t^2}_\rho-(1+t_n)^{\frac{d+2}{2}}\expec{v_{t_n}^2}_\rho\\
			=-2\int_{t_n}^{t}(1+s)^{\frac{2+d}{2}}\sE_{\rho}(v_s)\ \d s+\frac{d+2}{2}\int_{t_n}^{t}(1+s)^{\frac{d}{2}}\expec{v_s^2}_\rho \, \d s.
		\end{multline}
		We use the fact $v_t = P_t(u - R_{K(t)u})$ in \eqref{eq.defvt}, \eqref{eq.defPtRtu}, and $K(t)\equiv K_n$ is constant in the interval $[t_n,t_{n+1})$ as defined in \eqref{eq.nt}.
		
		Since the dynamics is translation invariant, we may replace $v_s$ by $\tau_x v_s$ on the {\rhs} of \eqref{eq.OnestepDecay_Derivative}, which yields
		\begin{equation}\label{eq.diffaux}
			-2\int_{t_n}^{t}(1+s)^{\frac{2+d}{2}}\sE_{\rho}(\tau_x v_s)\ \d s+\frac{d+2}{2}\int_{t_n}^{t}(1+s)^{\frac{d}{2}}\expec{(\tau_x v_s)^2}_\rho\, \d s.
		\end{equation}
		We set $\vert x \vert \leq \sqrt{t_n}$ and the remaining is to develop the {\rhs} in several steps.
		
		\smallskip
		
		\emph{Step~1: cutoff}. 
		For every $L\geq 1$, the second term in \eqref{eq.diffaux} is equal to
		\begin{equation}\label{eq.secdiffaux}
			\frac{d+2}{2}\int_{t_n}^{t}(1+s)^{\frac{d}{2}}\expec{(\tau_x v_s-A_L \tau_x v_s)^2}_\rho\ \d s+\frac{d+2}{2}\int_{t_n}^{t}(1+s)^{\frac{d}{2}}\expec{(A_L \tau_x v_s)^2}_\rho \, \d s.
		\end{equation}
		The cutoff estimate in Lemma~\ref{lem.Cutoff} applies to the first term above because $t_0$ satisfies $\supp(u)\subset \La_{\lfloor\sqrt{t_0}\rfloor}$ and $\supp({\tau_x u})\subset\La_{3\lfloor\sqrt{t}\rfloor}$. For every integer $L\geq \frac{d+2}{2}\sqrt{t_{n+1}}\log t_{n+1}$, we obtain that
		\begin{align*}
			\forall s\in[t_{n},t_{n+1}), \qquad \expec{(\tau_x v_s-A_L \tau_x v_s)^2}_\rho &\leq C s^{-\frac{d+2}{2}}\expec{\Ll(\tau_x (u-R_{K_n}u)\Rr)^2}_\rho, \\
			&\leq 4  C s^{-\frac{d+2}{2}} \expec{u^2}_\rho.
		\end{align*}
		In the last line, we use the translation invariant property and the Cauchy--Schwarz inequality.
		
		Viewing \eqref{eq.secdiffaux} and the previous cutoff estimate, \eqref{eq.diffaux} is bounded above by
		\begin{equation}\label{eq.cutoffes}
			-2\int_{t_n}^{t}(1+s)^{\frac{2+d}{2}}\sE_{\rho}(\tau_x v_s)\ \d s+\frac{d+2}{2}\int_{t_n}^{t}(1+s)^{\frac{d}{2}}\expec{(A_L \tau_x v_s)^2}_\rho \, \d s +C \expec{u^2}_\rho,
		\end{equation}
		for every integer $L$ satisfying $L\geq \frac{d+2}{2}\sqrt{t_{n+1}}\log t_{n+1}$.
		
		\smallskip
		
		\emph{Step~2: spectral gap.}
		Notice the fact that $B_{\ell,L} A_L F=B_{\ell,L}F$ as defined in \eqref{eq.defBL}, we further develop $\expec{(A_L \tau_x v_s)^2}_\rho$ using the spectral gap inequality in Lemma~\ref{lem.SpectGap}, with a choice of parameter 
		\begin{align}\label{eq.choice_ell}
			\ell := \Ll\lfloor \sqrt{\frac{2(1+t_n)}{(d+2)\theta}}\Rr\rfloor.
		\end{align}
		We also choose $L$ to be the smallest one satisfying $L\geq \frac{d+2}{2}\sqrt{t_{n+1}}\log (t_{n+1})$ such that $(2L+1)/(2\ell+1)$ is an integer. Hence, we obtain that
		\begin{equation}\label{eq.specdecom}
			\begin{split}
				&-2\int_{t_n}^{t}(1+s)^{\frac{2+d}{2}}\sE_{\rho}(\tau_x v_s) \, \d s+\frac{d+2}{2}\int_{t_n}^{t}(1+s)^{\frac{d}{2}}\expec{(A_L \tau_x v_s)^2}_\rho \, \d s +C \expec{u^2}_\rho\\
				&\leq -2\int_{t_n}^{t}(1+s)^{\frac{2+d}{2}}\sE_{\rho}(\tau_x v_s) \, \d s+\frac{d+2}{2}\int_{t_n}^{t}(1+s)^{\frac{d}{2}}\Ll(\expec{(B_{\ell,L} \tau_x v_s)^2}_\rho + \theta \ell^2 \sE_{\rho}(\tau_x v_s)\Rr)  \, \d s \\
				& \qquad + C \expec{u^2}_\rho  \\ 
				&\leq \frac{d+2}{2}\int_{t_n}^{t}(1+s)^{\frac{d}{2}} \expec{(B_{\ell,L} \tau_x v_s)^2}_\rho  \, \d s + C \expec{u^2}_\rho.
			\end{split}
		\end{equation}
		In the second line, the choice of $\ell$ in \eqref{eq.choice_ell} ensures that $\frac{d+2}{2} \theta \ell^2 (1+s)^{\frac{d}{2}} \leq (1+s)^{\frac{d+2}{2}}$ for all $s \in [t_n, t_{n+1})$, so the sum of Dirichlet energy is negative.
		
		Combining \eqref{eq.cutoffes} and \eqref{eq.specdecom}, we obtain the following estimate for every local function $u$, (with $C$ depending on $u$)
		\begin{multline}\label{eq.cutspaces}
			(1+t)^{\frac{d+2}{2}}\expec{v_t^2}_\rho-(1+t_n)^{\frac{d+2}{2}}\expec{v_{t_n}^2}_\rho\\
			\leq \frac{d+2}{2}\int_{t_n}^{t}(1+s)^{\frac{d}{2}}\expec{(B_{\ell,L}\tau_{x}v_s)^2}_\rho\ \d s+C.
		\end{multline}
		
		\smallskip
		\emph{Step~3: spatial average.} Since the previous formula holds for all $|x|\leq\sqrt{t_n}$, we may average it in space to obtain another bound for the left-hand side of \eqref{eq.cutspaces} 
		\begin{equation*}
			\frac{d+2}{2}\int_{t_n}^{t}\frac{1}{|\La_{\ell}|}\sum_{x\in\La_\ell}(1+s)^{\frac{d}{2}}\expec{(B_{\ell,L}\tau_x v_s)^2}_\rho\ \d s+C.
		\end{equation*}
		
		We then develop $\expec{(B_{\ell,L}\tau_x v_s)^2}_\rho$. Recall that $s\in [t_n,t_{n+1})$ and the expression of $v_s$ in \eqref{eq.defvt}
		\begin{equation*}
			v_s=u_s-R_{K_n}u_s=\frac{1}{|\La_{K_n}|}\sum_{y\in\La_{K_n}}(u_s-\tau_y u_s).
		\end{equation*}
		Using Cauchy--Schwarz inequality, we obtain
		\begin{equation*}
			\expec{(B_{\ell,L}\tau_x v_s)^2}_\rho\leq\frac{1}{|\La_{K_n}|}\sum_{y\in\La_{K_n}}\expec{(B_{\ell,L}\tau_x(u_s-\tau_y u_s))^2}_\rho.
		\end{equation*}
		For every $y\in\La_{K_n}$, there exists a canonical path from $0$ to $y$ which consists of $\vert y \vert_1$ nearest neighbor steps. Hence, we can define $\gamma_y:=(e_1,\cdots,e_{\vert y \vert_1})$ where each $e_i=(y_{i-1},y_i)$ is an nearest oriented edge in $\La_{K_n}$. Using the definition \eqref{eq.defDelta_e} and the Cauchy--Schwarz inequality, we have that
		\begin{equation*}
			\expec{(B_{\ell,L}\tau_{x}(u_s-\tau_{y}u_s))^2}_\rho\leq|\gamma_y|\sum_{e\in\gamma_y}\expec{(B_{\ell,L}\tau_x(\Delta_e u_s))^2}_\rho.
		\end{equation*}
		
		Combining the above computation, we obtain the following estimate:
		\begin{multline}\label{eq.spaceaver}
			(1+t)^{\frac{d+2}{2}}\expec{v_t^2}_\rho-(1+t_n)^{\frac{d+2}{2}}\expec{v_{t_n}^2}_\rho\\
			\leq \frac{d+2}{2}\int_{t_n}^{t}\Ll((1+s)^{\frac{d}{2}}\sum_{x\in\La_{\ell},y\in\La_{K_n}}\frac{1}{|\La_{\ell}|}\frac{1}{|\La_{K_n}|}|\gamma_y|\sum_{e\in\gamma_y}\expec{(B_{\ell,L}\tau_x(\Delta_e u_s))^2}_\rho \Rr)\, \d s + C.
		\end{multline}
		
		\smallskip
		\emph{Step~4: entropy estimates.} We aim to give an upper bound for the {\rhs} of \eqref{eq.spaceaver}. Let $\mathbb{M}_q:=\{0,1,\cdots,|\La_\ell|\}^q$ stand for the value space of $\mathbf{M}$. Then we calculate the Radon--Nikodym derivative for all $M\in\mathbb{M}_q$
		\begin{align}\label{eq.derRN}
			f^{M}(\eta):=\frac{\1_{\{\mathbf{M}(\eta) = M\}}}{\P_\rho(\mathbf{M}(\eta) = M)}.
		\end{align} 
		We also define the following shorthand notation 		
		\begin{equation*}
			\forall x\in\Zd, t \geq 0, \qquad f_{x,t}^{M}:=\tau_x P_t f^{M}.
		\end{equation*}
		Then, by the reversibility and the translation invariance, we have
		\begin{align*}
			\forall M\in\mathbb{M}_q, \qquad (B_{\ell,L}\tau_x P_s u)(M) &=\E_{\rho}[\tau_x P_s u|\mathbf{M} = M]\\
			&=\int_{\X} \tau_x P_s u f^{M} \P_\rho(\d \eta)\\
			&=\expec{u f_{-x,s}^{M}}_\rho.
		\end{align*}
		Therefore, we obtain the following expression
		\begin{equation}\label{eq.compucond}
			\expec{(B_{\ell,L}\tau_{x}(\Delta_e u_s))^2}_\rho=\sum_{M\in\mathbb{M}_q}\P_\rho(\mathbf{M}=M)\expec{\Delta_e u\ f_{-x,s}^{M}}^2_\rho.
		\end{equation}
		We insert \eqref{eq.compucond} in \eqref{eq.spaceaver} given $M\in\mathbb{M}_q$, and obtain that
		\begin{equation}\label{eq.comcanens}
			\begin{aligned}
				& \sum_{x\in\La_{\ell}}\sum_{y\in\La_{K_n}}\sum_{e\in\gamma_y}\frac{|\gamma_y|}{|\La_{\ell}||\La_{K_n}|}\expec{\Delta_e u\ f_{-x,s}^{M}}^2_\rho \\
				&\leq C\sum_{x\in\La_{\ell}}\sum_{y\in\La_{K_n}}\sum_{e\in\gamma_y}\frac{d K_n}{|\La_{\ell}||\La_{K_n}|}\expec{\sE_{\rho,\La_{\ell_u+1}(e)}\Ll(\sqrt{f_{-x,s}^{M}}\Rr)}_\rho\\
				&\leq C\sum_{x\in\La_{\ell}}\sum_{y\in\La_{K_n}}\sum_{e\in\gamma_y}\frac{d K_n}{|\La_{\ell}||\La_{K_n}|}\expec{\sE_{\rho,x+\La_{\ell_u+1}(e)}\Ll(\sqrt{f_{s}^{M}}\Rr)}_\rho\\
				&\leq \frac{C K_n^2}{\ell^d}\sE_{\rho}\Ll(\sqrt{f_s^{M}}\Rr).
			\end{aligned}
		\end{equation}
		The first line applies Lemma~\ref{lem.H^-1} and the fact that $|\gamma_y|=\vert y \vert_1\leq d K_n$. 	
		In the second inequality, we use the fact that $\expec{\sE_{\rho,\La}(\tau_x F)}_\rho=\expec{\sE_{\rho,-x+\La}(F)}_\rho$ for every $x$ and $F$, due to the translation invariant property. The last inequality follows from an explicit computation: the denominator $|\La_{K_n}|$ cancels with the summation in $y$, and one edge in the energy appears at most $|\La_{\ell_u+1}(e)|d K_n=C K_n$ times in the summation of $x$ and $e$.

		The last line of \eqref{eq.comcanens} reminds us of the entropy. For every density function $f$ in $(\X, \fil, \Pr)$, we define its entropy as 
		\begin{align*}
			\operatorname{Ent}[f] := \Er[f \log f].
		\end{align*} 
		Then it is well known that 
		\begin{align*}
			\partial_s \operatorname{Ent}[P_s f] \leq-4\sE_{\rho}\Ll(\sqrt{P_s f}\Rr).
		\end{align*}
		Hence, $\operatorname{Ent}[P_s f]$ is decreasing. We apply the formula above to $f_{s}^{M} = P_s f^M$ and get
		\begin{multline}\label{eq.comint}
			\int_{t_n}^{t}4\sE_{\rho}\Ll(\sqrt{f_{s}^{M}}\Rr)\ \d s\leq\operatorname{Ent}\Ll[f_{t_n}^{M}\Rr]-\operatorname{Ent}\Ll[f_{t}^{M}\Rr]\leq\operatorname{Ent}\Ll[f^{M}\Rr]\\
			= \int_{\X}   \frac{\1_{\{\mathbf{M}(\eta) = M\}}}{\P_\rho(\mathbf{M} = M)} \log\Ll(\frac{1}{\P_{\rho}\Ll(\mathbf{M}=M\Rr)}\Rr) \, \Pr(\eta) = \log\Ll(\frac{1}{\P_{\rho}\Ll(\mathbf{M}=M\Rr)}\Rr).
		\end{multline}
		Since $x \mapsto x\log (1/x)$ is concave, by Jensen's inequality and \eqref{eq.choice_ell}, we have
		\begin{multline}\label{eq.coment}
			\sum_{M\in\mathbb{M}_q}\P_{\rho}(\mathbf{M}=M)\log \Ll(\frac{1}{\P_{\rho}(\mathbf{M}=M)}\Rr) \\
			\leq \log(|\mathbb{M}_q|)\leq C \Ll(\frac{L}{\ell}\Rr)^d \log(\ell)\leq C (\log t_n)^{d+1},
		\end{multline}
		where $C$ is a constant that depends only on $d$ and $\theta$.
		
		Combining  \eqref{eq.compucond}, \eqref{eq.comcanens}, \eqref{eq.comint}, \eqref{eq.coment}, and the fact $s \simeq \ell^2 \simeq t_n$ when $s\in[t_n,t_{n+1})$  together with $K_n\leq t_n^{(1-\varepsilon)/2}$ (see \eqref{eq.choice_ell} and \eqref{eq.tn}), we obtain the following estimate
		\begin{equation}\label{eq.canenes}
			\begin{split}
				&\int_{t_n}^{t}(1+s)^{\frac{d}{2}}\sum_{x\in\La_{\ell},y\in\La_{K_n}}\frac{1}{|\La_{\ell}|}\frac{1}{|\La_{K_n}|}|\gamma_y|\sum_{e\in\gamma_y}\expec{(B_{\ell,L}\tau_x(\Delta_e u_s))^2}_\rho\ \d s\\
				&\leq\sum_{M\in\mathbb{M}_q}\P_\rho(\mathbf{M}=M)\int_{t_{n}}^{t}(1+s)^{\frac{d}{2}}\frac{C K_n^2}{\ell^d}\sE_{\rho}\Ll(\sqrt{f_s^{M}}\Rr)\ \d s\\
				&\leq Ct_n^{1-\varepsilon}\Ll(\log t_n\Rr)^{d+1}.
			\end{split}
		\end{equation}
		The desired estimate \eqref{eq.OnestepDecay} then follows from \eqref{eq.spaceaver} and \eqref{eq.canenes}.
	\end{proof}

	\appendix

	\section{Sobolev norms}\label{sec.Sobolev}
	\begin{proof}[Proof of Lemma~\ref{lem.elementary}]
		\begin{align*}
			\partial_s \bracket{\bar{G}_s (-\Lb)^{k} \bar{G}_s}_\rho = -2 \bracket{\bar{G}_s (-\Lb)^{k+1} \bar{G}_s}_\rho.
		\end{align*}
		We do integration from $\tau$ to $t$ to obtain
		\begin{align*}
			\bracket{\bar{G}_t (-\Lb)^{k} \bar{G}_t}_\rho - \bracket{\bar{G}_\tau (-\Lb)^{k} \bar{G}_\tau}_\rho = -2 \int_\tau^t \bracket{\bar{G}_s (-\Lb)^{k+1} \bar{G}_s}_\rho \, \d s.
		\end{align*}
		This concludes \eqref{eq.GtHkDecay}.
		
		To study \eqref{eq.GtHkDecayd}, we use the fact that the mapping $t \mapsto \norm{\bar{G}_t}_{\dH^k}$ is decreasing via spectral analysis; see \cite[Lemma~1.3.2]{fukushima2010dirichlet}. Then \eqref{eq.GtHkDecay} leads to 
		\begin{equation*}
			(t-\tau)\norm{\bar{G}_t}^2_{\dH^{k+1}}\leq \int_{\tau}^{t}\norm{\bar{G}_s}^2_{\dH^{k+1}}\d s\leq \frac{1}{2}\norm{\bar{G}_\tau}^2_{\dH^{k}},
		\end{equation*}
		and we thus have 
		\begin{equation*}
			\norm{\bar{G}_t}_{\dH^{k+1}} \leq \frac{1}{(t - \tau)^{\frac{1}{2}}} \norm{\bar{G}_\tau}_{\dH^{k+1}}.
		\end{equation*}
		We thus conclude \eqref{eq.GtHkDecayd}.

		Concerning \eqref{eq.GgH1}, we have 
		\begin{align*}
			\norm{\Pi_{1} \bar{G}_t}^2_{\dH^{1}} & = \frac{1}{4} \sum_{x \in \Zd} \sum_{y \in \Zd} Q_{y} \bracket{\Ll(\D_y g_t(x) (\eta_x - \eta_{x+y})\Rr)^2}_{\rho} \\
			& \geq \frac{\chi(\rho)}{16} \sum_{x \in \Zd} \sum_{i=1}^d \vert \D_{e_i} g_t(x) \vert^2 \\
			&=\frac{\chi(\rho)}{16}   \sum_{i=1}^d \norm{ \D_{e_i} g_t}^2_{\ell^2(\Zd)}.
		\end{align*}
		In the second line, we use the property (1) in  Lemma~\ref{lem.CovRW}.
		
		The estimate  \eqref{eq.GgH2} strongly relies on the fact that we can close the generator in $\HH_1$ like \eqref{eq.Delta_Q_Closed}
		\begin{align*}
			\Lb \Ll(\Pi_{1} \bar{G}_t \Rr) = I_1 \Ll(\frac{1}{2} \Delta_Q g_t \Rr).
		\end{align*}
		Then we calculate the $\dH^2$-norm via the isometric property
		\begin{align*}
			\norm{\Pi_{1} \bar{G}_t}^2_{\dH^{2}} &= \chi(\rho) \norm{\frac{1}{2} \Delta_Q g_t}^2_{\ell^2(\Zd)} \\
			&= \frac{\chi(\rho)}{4}\bracket{\sum_{h \in \Zd} Q_h \D^*_h \D_h g_t, \sum_{h \in \Zd} Q_h \D^*_h \D_h g_t}_{\ell^2(\Zd)} \\
			&= \frac{\chi(\rho)}{4} \sum_{h,h'\in \Zd} Q_h Q_{h'} \norm{\D_{h'} \D_h g_t}^2_{\ell^2(\Zd)}\\
			&\geq \frac{\chi(\rho)}{64} \sum_{i,j=1}^d  \norm{\D_{e_i} \D_{e_j} g_t}^2_{\ell^2(\Zd)}.
		\end{align*}
		We also use the property (1) of Lemma~\ref{lem.CovRW} in the last line.
	\end{proof}
	
	\section{Generalized Nash inequality}\label{sec.Nash}
	For the convenience of readers, we reformulate the proof of the generalized Nash inequality in \cite[Section 6]{berzeg}.
	\begin{proposition}[Generalized Nash inequality]
		For every function $f\in\tilde{\HH}_n$, the following estimate holds
		\begin{equation*}
			\bracket{f^2}_{\rho}\leq C\chi(\rho)^{n(1-\alpha_n)}\sEb_\rho(f)^{\alpha_n}|||f|||^{2(1-\alpha_n)}_n,
		\end{equation*}
		where $\alpha_n=\frac{nd}{2+nd}$ and $C$ is a constant which depends only on $n$ and $d$.
	\end{proposition}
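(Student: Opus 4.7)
The plan is to pass from $\HH_n$ to the combinatorial space $\K_n$, where the inequality becomes a Nash inequality for the $n$-particle symmetric exclusion walk, and then establish this via ultracontractivity.

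First, I would use the Wiener--It\^o identification $f = I_n(f_n)$ with $f_n \in \ell^1(\K_n)$ (which defines $\tilde\HH_n$) to rewrite each quantity in terms of $f_n$. By the isometry \eqref{eq.ItoIso}, $\langle f^2\rangle_\rho = \chi(\rho)^n \|f_n\|_{\ell^2(\K_n)}^2$. Since $D_Y \bar\eta_{Y'} = \mathbf{1}_{Y \subset Y'}\bar\eta_{Y'\setminus Y}$, one has $D_Y I_n(f_n) = f_n(Y)$ whenever $|Y|=n$, giving $|||f|||_n = \|f_n\|_{\ell^1(\K_n)}$. Finally, Lemma~\ref{lem.eqKolmogrov} and a symmetrization of the quadratic form gives $\sEb_\rho(f) = \chi(\rho)^n \langle f_n,-\tfrac{1}{2}\Delta^{(n)}_Q f_n\rangle_{\ell^2(\K_n)}$. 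A check of exponents shows the powers of $\chi(\rho)$ balance: both sides carry a factor $\chi(\rho)^n$. Thus the claim reduces to the pure Nash inequality
\begin{equation*}
\|f_n\|_{\ell^2(\K_n)}^2 \;\leq\; C \,\sE^{(n)}(f_n)^{\alpha_n}\, \|f_n\|_{\ell^1(\K_n)}^{2(1-\alpha_n)},\qquad \sE^{(n)}(f_n):=\big\langle f_n,-\tfrac{1}{2}\Delta^{(n)}_Q f_n\big\rangle.
\end{equation*}

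Second, I would obtain this via the Varopoulos--Carlen--Kusuoka--Stroock equivalence: the inequality above with exponent $\alpha_n = nd/(nd+2)$ is equivalent to the ultracontractivity bound
\begin{equation*}
\big\|P^{(n)}_t\big\|_{\ell^1(\K_n)\to\ell^\infty(\K_n)} \;\leq\; C_n\, t^{-nd/2},\qquad t\geq 1,
\end{equation*}
for the $n$-particle exclusion semigroup $P^{(n)}_t = e^{t\Delta^{(n)}_Q/2}$. To establish this heat kernel bound, I would couple the exclusion dynamics with $n$ independent copies of the single-particle walk generated by $\tfrac{1}{2}\Delta_Q$ on $\Zd$, using the stirring construction: label the particles and at each attempted jump that would create a collision, exchange labels instead of blocking. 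Since the law of the unordered configuration is unchanged by relabelling, the transition kernel of $P^{(n)}_t$ on $\K_n$ is bounded pointwise by $n!$ times the product of single-particle kernels. Standard single-particle heat kernel bounds (Lemma~\ref{lem.NashZd} applied at $L^\infty$ level, or the usual Nash ultracontractivity on $\Zd$) then give the $n$-particle bound $C_n t^{-nd/2}$ and finish the argument.

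The main obstacle is the coupling/kernel comparison step: one needs to justify that the exclusion transition kernel on $\K_n$ is dominated (up to $n!$) by the product of $n$ single-particle kernels, despite the fact that exclusion constraints produce nontrivial correlations. The cleanest route is the labelled stirring process -- exclusion and stirring define the same dynamics on $\K_n$ after forgetting labels, and the labelled stirring is nothing but $n$ continuous-time walks that swap positions on collision, so its finite-dimensional distributions are bounded by those of $n$ independent walks. Once this pointwise kernel bound is in place, the rest of the proof (reformulation via Wiener--It\^o and Varopoulos' theorem) is standard and preserves the exponent $\alpha_n = nd/(nd+2)$ exactly as stated.
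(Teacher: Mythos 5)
Your reduction to the lattice problem is sound and genuinely different from the paper's route: the identifications $\bracket{f^2}_\rho=\chi(\rho)^n\norm{f_n}^2_{\ell^2(\K_n)}$, $|||f|||_n=\norm{f_n}_{\ell^1(\K_n)}$, and (via Lemma~\ref{lem.eqKolmogrov} and polarization of \eqref{eq.ItoIso}) $\sEb_\rho(f)=\chi(\rho)^n\bracket{f_n,-\tfrac12\Delta^{(n)}_Q f_n}_{\ell^2(\K_n)}$ are all correct, the powers of $\chi(\rho)$ balance exactly as you say, and the passage from an ultracontractivity bound $\norm{P^{(n)}_t}_{\ell^1\to\ell^\infty}\leq C_n t^{-nd/2}$ back to the Nash inequality with $\alpha_n=\frac{nd}{2+nd}$ is standard (small $t$ being trivial on a countable state space). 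By contrast, the paper never compares with independent particles at all: it runs a martingale decomposition over a grid of boxes, applies the Lu--Yau canonical-ensemble spectral gap on each box, iterates through the chaos structure, and optimizes over the box size $\ell$, which is why the exclusion constraint never has to be "uncorrelated away".

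The genuine gap is exactly at the step your plan leans on: the claim that the $n$-particle exclusion kernel on $\K_n$ is pointwise dominated by ($n!$ times, or a permutation sum of) products of single-particle kernels. The stirring/relabelling remark does not prove this. Relabelling gives you that each \emph{labelled} stirring particle is marginally a single random walk with rates $Q$; it says nothing about joint laws, and the assertion that "the finite-dimensional distributions are bounded by those of $n$ independent walks" is false as a general statement about labelled stirring and, in the form you need it, is precisely a negative-dependence property of symmetric exclusion rather than a consequence of the coupling. The correct repair is nontrivial: writing $p^{(n)}_t(A,B)=\P^A(A_t=B)\leq\E^A\bigl[\prod_{y\in B}\eta_t(y)\bigr]$ and then invoking the $n$-point correlation inequality $\E^A\bigl[\prod_{y\in B}\eta_t(y)\bigr]\leq\prod_{y\in B}\E^A[\eta_t(y)]=\prod_{y\in B}\sum_{x\in A}\bar p_t(x,y)\leq C_n t^{-nd/2}$ requires negative association of symmetric exclusion started from a deterministic configuration (classical for $n=2$, but for general $n$ this rests on the strong Rayleigh/negative-association theorem of Borcea--Br\"and\'en--Liggett, or on a comparable duality-based comparison theorem), none of which is cited or proved in your proposal. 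So the argument is salvageable, but only by importing a deep correlation inequality at exactly the point where the exclusion rule bites; as written, that step is asserted rather than established, which is the same difficulty the paper (following Bertini--Zegarlinski) deliberately circumvents with the spectral-gap/martingale scheme.
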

	\begin{proof}
		In the proof, we make use of the notations in Section~\ref{subsec.Fock}, and we will  introduce some more notations. By \eqref{eq.EnergyLbarLower} in Corollary~\ref{cor.CovSEP} we may consider $\sEb_\rho$ here is the Dirichlet energy with respect to a SSEP. Recall that $\La_\ell=\{-\ell,\cdots,\ell\}^d$ is a cube of side length $2\ell+1$. We consider an enumeration of the set $(2\ell+1)\Zd$: $(2\ell+1)\Zd=\{x_1,x_2,\cdots\}$ such that $|x_j|\leq |x_k|$ for $j\leq k$. We also define the shorthand notation
		\begin{align}\label{eq.defLambdak}
			\La^{k} \equiv \La_{\ell}(x_k)=x_k+\La_\ell.
		\end{align}

		We define a family $\{E^{k}\}^{\infty}_{0}$ of conditional expectations:
		\begin{equation}\label{eq.defEk}
			\forall k \in \N, \qquad E^{k}f:=\E_\rho[f|\fil_{(\cup_{j\leq k}\La^j)^c}] =\E_{\rho, \cup_{j\leq k}\La^j}[f].
		\end{equation}
		It yields a structure of martingale, so we define the martingale difference $\{\Delta^{k}\}_{1}^{\infty}$ as 
		\begin{equation}\label{eq.defDk}
			\Delta^{k} f:=E^{k-1}f-E^{k} f.
		\end{equation}
		We denote by $\mathbf{M}^k$ the number of particles in the cube $\Lambda^k$
		\begin{align*}
			\mathbf{M}^k := \sum_{x\in\La^k}\eta_x .
		\end{align*}
		By the spectral gap inequality under canonical ensemble \cite{LuYau}, we have:
		\begin{equation}\label{ae2}
			\bracket{\Ll(f-\E_{\rho, \La^k}[f|\mathbf{M}^k]\Rr)^2}_{\rho,\La^k}\leq C \ell^2 \sEb_{\rho,\La^k}(f),
		\end{equation}
		with a constant $C$ depending on $d$, and the finite volume Dirichlet form associated to SSEP defined as follows
		\begin{equation}\label{eq.DiriSSEP}
			\sEb_{\rho,\La}(f):=\frac{1}{2}\sum_{b \in\La^*}\bracket{(\pi_bf)^2}_{\rho,\La}.
		\end{equation}

		\textit{Step~1: the spectral gap inequality.}
		For every $f\in\HH_{n}$ with $n\geq 1$, we have $\bracket{f}_\rho=0$. Using the orthogonal decomposition given by \eqref{eq.defEk} and \eqref{eq.defDk}, we have
		\begin{equation}\label{ae3}
			\begin{split}
				\bracket{f^2}_\rho &=\sum_{k=1}^{\infty}\bracket{\Ll(\Delta^{k} f\Rr)^2}_\rho\\
				&=\sum_{k=1}^{\infty}\bracket{\Ll(\Delta^{k} f-E^k[\Delta^{k}f|\mathbf{M}^k ]\Rr)^2}_\rho+\sum_{k=1}^{\infty}\bracket{\Ll(E^k[\Delta^{k}f|\mathbf{M}^k ]\Rr)^2}_\rho\\
				&\leq  \sum_{k=1}^{\infty} C \ell^2 \bracket{\sEb_{\rho,\La^k}\Ll(\Delta^{k} f\Rr)}_\rho + \sum_{k=1}^{\infty}\bracket{\Ll(E^k[\Delta^{k}f|\mathbf{M}^k ]\Rr)^2}_\rho.
			\end{split}
		\end{equation}
		In the last line, we make use of the spectral gap inequality \eqref{ae2}.
		
		We then estimate $\bracket{\Ll(E^k[\Delta^{k}f|\mathbf{M}^k ]\Rr)^2}_\rho$. Notice the following projection
		\begin{align*}
			E^k[\Delta^{k}f|\mathbf{M}^k] = \sum_{s=0}^{\vert \La^k \vert} a_s \Ll(\sum_{Y\in\K_s(\La^k)}\bar{\eta}_{Y}\Rr),
		\end{align*}
		because $\Ll(\sum_{Y\in\K_s(\La^k)}\bar{\eta}_{Y}\Rr)$ are eigenvectors under $	E^k[\cdot|\mathbf{M}^k]$. Moreover, because these vectors are orthogonal under $E^k$, one can derive the following expansion
		\begin{align}\label{ae5}
			\bracket{\Ll(E^k[\Delta^{k}f|\mathbf{M}^k ]\Rr)^2}_\rho =\sum_{s=1}^{n}\frac{1}{|\mcl{K}_s(\La_\ell)|\chi(\rho)^s}\bracket{E^k\Ll[\sum_{Y\in\K_s(\La^k)}\bar{\eta}_{Y}\cdot \Delta^{k} f\Rr]^2}_\rho.
		\end{align}
		In the expression above, the case $s=0$ does not contribute, and the truncation at level $n$ is due to the fact $f \in \HH_{n}$. Using the Glauber derivatives defined in \eqref{eq.Glauber} and \eqref{eq.IPP_Ber}, we can reformulate $E^k\Ll[\sum_{Y\in\K_s(\La^k)}\bar{\eta}_{Y}\cdot \Delta^{k} f\Rr]$ as 
		\begin{align*}
			E^k\Ll[\sum_{Y\in\K_s(\La^k)}\bar{\eta}_{Y}\cdot \Delta^{k} f\Rr]&=\chi(\rho)^{s}\sum_{Y\in\K_s(\La^k)}E^k\Ll[D_Y \Delta^{k} f\Rr]\\
			&=\chi(\rho)^{s}\sum_{Y\in\K_s(\La^k)}E^k\Ll[D_Y f\Rr].
		\end{align*}
		The second line comes from the fact that $E^k\Ll[D_Y E^{k-1}f\Rr] = E^k\Ll[D_Y f\Rr]$ and ${E^k\Ll[D_Y E^{k}f\Rr]=0}$. For convenience, we define a mapping $\varphi^{k}_s : \HH_n \to \HH_{n-s}$ for all $1 \leq s \leq n$ that
		\begin{equation}\label{eq.defphi_s_k}
			\varphi^{k}_s f:= \sum_{Y\in\K_s(\La^k)} E^{k}[D_Y f].
		\end{equation}
		One should keep in mind that the superscript ``$k$'' indicates the $k$-th cube for the expectation and the Glauber derivative, while the subscript ``$s$'' indicates the order of the Glauber derivative.  Using this notation and \eqref{ae5}, then \eqref{ae3} can be reformulated as 
		\begin{equation}\label{ae6}
			\bracket{f^2}_\rho\leq  \sum_{k=1}^{\infty} C \ell^2 \bracket{\sEb_{\rho,\La^k}\Ll(\Delta^{k} f\Rr)}_\rho +\sum_{k=1}^{\infty}\sum_{s=1}^{n}|\K_s(\La_\ell)|^{-1}\chi(\rho)^{s}\bracket{\Ll(\varphi^{k}_s f\Rr)^2}_\rho.
		\end{equation}
		
		\smallskip
		
		\textit{Step~2: iteration.}
		Notice that $E^{k} \varphi^{k}_{s}f=\varphi^{k}_{s}f$, we obtain a decomposition using \eqref{eq.defDk} 
		\begin{equation*}
			\varphi^{k}_s f=\sum_{k'=k+1}^{\infty}\Delta^{k'}\varphi^{k}_s f.
		\end{equation*}
		We then apply the same argument in inequality \eqref{ae6} to $\bracket{\Ll(\varphi^{k}_s f\Rr)^2}_\rho$ for all $1 \leq s \leq n$: 
		\begin{multline*}
			\bracket{\Ll(\varphi^{k}_s f\Rr)^2}_\rho\leq C\ell^2\sum_{k'>k}\bracket{\sEb_{\rho,\La^{k'}}\Ll(\Delta^{k'}\varphi^{k}_s f\Rr)}_\rho\\
			+\sum_{k'>k}\sum_{s'=1}^{n-s}|\K_{s'}(\La_\ell)|^{-1}\chi(\rho)^{s'}\bracket{\Ll(\varphi^{k'}_{s'}\varphi^{k}_s f\Rr)^2}_\rho.
		\end{multline*}
		We iterate the above process in  \eqref{ae6} and obtain the following estimate:
		\begin{equation}\label{ae7}
			\bracket{f^2}_\rho\leq C \ell^2(\mathbf{I})+(\mathbf{II}),
		\end{equation}
		where
		\begin{align*}
			(\mathbf{I}) &:=\sum_{m=1}^{n}\sum_{k_m>\cdots>k_1}\sum_{s_1+\cdots+s_{m-1}<n}\prod_{i=1}^{m-1} \Ll(\frac{\chi(\rho)^{s_i}}{|\K_{s_{i}}(\La_\ell)|}\Rr)
			\bracket{\sEb_{\rho,\La^{k_m}}\Ll(\Delta^{k_m} \varphi^{k_{m-1}}_{s_{m-1}}\cdots \varphi^{k_1}_{s_1}f\Rr)}_\rho, \\
			(\mathbf{II}) &:=\sum_{m=1}^{n}\sum_{k_m>\cdots>k_1}\sum_{s_1+\cdots+s_m=n}\prod_{i=1}^{m}\Ll(\frac{\chi(\rho)^{s_i}}{|\K_{s_{i}}(\La_\ell)|}\Rr)\bracket{\Ll(\varphi^{k_m}_{s_m}\cdots \varphi^{k_1}_{s_1}f\Rr)^2}_\rho.
		\end{align*}
		Actually, the iteration will stop once the term meets the Dirichlet energy operator, or the operators $\varphi^{k}_{s}$ project it into $\HH_0$.
		
		\smallskip
		\textit{Step~3: closing the equation.}
		By Cauchy--Schwarz inequality, Jensen's inequality, and the definition of $\varphi^{k}_{s}$ in \eqref{eq.defphi_s_k}, we have
		\begin{multline*}
			\bracket{\sEb_{\rho,\La^{k_m}}\Ll(\Delta^{k_m} \varphi^{k_{m-1}}_{s_{m-1}}\cdots \varphi^{k_1}_{s_1}f\Rr)}_\rho\\
			\leq \prod_{i=1}^{m-1}\Ll|\K_{s_i}(\La_\ell)\Rr|\cdot\sum_{\substack{ Y_i\in\K_{s_i}(\Lambda^{k_i}) \\ 1 \leq i \leq m-1}}\expec{\sEb_{\rho,\La^{k_m}}\Ll(\Delta^{k_m}D_{Y_{m-1}}\cdots D_{Y_1}f\Rr)}_\rho.
		\end{multline*}
		Therefore, the volume factor $\Ll|\K_{s_i}(\La_\ell)\Rr|$ actually will cancel. We arrange $(\mathbf{I})$ with chaos expansion formula, which will yield the following bound
		\begin{equation}\label{eq.AppL^2}
			(\mathbf{I})\leq \sum_{k=1}^{\infty}\sum_{s=0}^{n-1}\sum_{Y\in\K_{s}(\cup_{j\leq k-1}\La^{j})}\chi(\rho)^s\expec{\sEb_{\rho,\La^k}\Ll(\Delta^{k} D_{Y}f\Rr)}_\rho\leq \sEb_{\rho}(f).
		\end{equation}
		
		Concerning $(\mathbf{II})$, we notice that there exists a constant $C$ such that, the following inequality holds for all  $\ell\geq n$: 
		\begin{equation*}
			\forall s_1+\cdots+s_m=n, \qquad \prod_{i=1}^{m}\Ll(\frac{\chi(\rho)^{s_i}}{|\K_{s_{i}}(\La_\ell)|}\Rr)\leq \frac{C n! \chi(\rho)^n}{|\La_\ell|^{n}} \leq C \ell^{-nd}n! \chi(\rho)^n.
		\end{equation*}
		
		Since $\varphi^{k_m}_{s_m}\cdots \varphi^{k_1}_{s_1}f\in\HH_0$ is a constant when $\sum_{i=1}^{m}s_i=n$, we can estimate
		\begin{align*}
			\bracket{\Ll(\varphi^{k_m}_{s_m}\cdots \varphi^{k_1}_{s_1}f\Rr)^2}_\rho
			&=\left(\prod_{i=1}^{m}\Ll(\sum_{Y_i\in\K_{s_i}(\La^{k_i})}D_{Y_i}\Rr)f\right)^2\\
			&\leq\left(\sum_{Y_i\in\K_{s_i}(\La^{k_i})}|D_{Y_m}\cdots D_{Y_1}f|\right)^2.
		\end{align*}
		Notice that we have the following inequality
		\begin{multline*}
			\sum_{m=1}^{n}\sum_{k_m>\cdots>k_1}\sum_{s_1+\cdots+s_m=n}\left(\sum_{Y_i\in\K_{s_i}(\La^{k_i})}|D_{Y_m}\cdots D_{Y_1}f|\right)^2\\
			\leq \left(\sum_{m=1}^{n}\sum_{k_m>\cdots>k_1}\sum_{s_1+\cdots+s_m=n}\sum_{Y_i\in\K_{s_i}(\La^{k_i})}|D_{Y_m}\cdots D_{Y_1}f|\right)^2,
		\end{multline*}
		and the {\rhs} is actually $|||f|||^2_n$ by \eqref{eq.defTripleNorm}. Thus we obtain an upper bound for $(\mathbf{II})$:
		\begin{equation}\label{eq.AppL^1}
			(\mathbf{II})\leq C\ell^{-nd}n!\chi(\rho)^n|||f|||^2_n.
		\end{equation}
		
		Using \eqref{eq.AppL^2} and \eqref{eq.AppL^1}, the inequality \eqref{ae7} becomes
		\begin{equation}\label{eq.AppL^1L^2}
			\bracket{f^2}_\rho\leq C \ell^2 \sEb_{\rho}(f)+C \ell^{-nd} n! \chi(\rho)^n |||f|||_n^2.
		\end{equation}
		By a direct computation and Cauchy-Schwarz inequality, we can show 
		\begin{equation}\label{eq.Diri|||ratio}
			\forall f\in\tilde{H}_n, \qquad \sEb_{\rho}(f)\leq 8dn \chi(\rho)^n|||f|||_{n}^2.
		\end{equation}
		We complete the proof by using \eqref{eq.AppL^1L^2} and \eqref{eq.Diri|||ratio} and choosing an appropriate $\ell$:
		\begin{equation*}
			\ell= \max\Ll\{\left\lfloor \left(\frac{n!\chi(\rho)^n|||f|||_n^2}{\sEb_{\rho}(f)}\right)^{\frac{1}{2+nd}}\right\rfloor, n\Rr\}.
		\end{equation*}
	\end{proof}
	
	\section*{Acknowledgements}
	This research is supported by the National Key R\&D Program of China (No. 2023YFA1010400) and NSFC (No. 12301166). We thank Claudio Landim and Jean-Christophe Mourrat for the comments on the preliminary version of the manuscript.

	\bibliographystyle{plain}
	\bibliography{KawasakiRef}
	
\end{document}